\theoremstyle{plain}
\newtheorem{thm}{Theorem}[section]
\newtheorem{lemma}[thm]{Lemma}
\newtheorem{prop}[thm]{Proposition}
\newtheorem{example}{Example}
\theoremstyle{definition}
\theoremstyle{remark}
\newtheorem{rem}{Remark}
\newcommand{\C}{\mathbb C}
\newcommand{\R}{\mathbb R}
\newcommand{\N}{\mathbb N}
\newcommand{\PT}{\mathcal P\mathcal T}
\def\eps{\varepsilon}
\def\ri{{\rm i}}
\def\bi{\begin{itemize}}
	\def\ei{\end{itemize}}
\newcommand{\B}{\mathcal{B}}
\newcommand{\Z}{\mathbb{Z}}
\newcommand{\cH}{{\mathcal H}}
\newcommand{\cL}{{\mathcal L}}
\newcommand{\cM}{{\mathcal M}}
\renewcommand{\dim}{{\rm dim}\,}
\newcommand{\pa}{\partial}
 \def\dd{\, {\rm d}}
\DeclareMathOperator{\spann}{span}
\DeclareMathOperator{\Real}{Re}
\DeclareMathOperator{\Imag}{Im}
\DeclareOldFontCommand{\it}{\normalfont\itshape}{\mathit}
\newcommand{\bspm}{\left(\begin{smallmatrix}}\newcommand{\espm}{\end{smallmatrix}\right)}
\newcommand{\bpm}{\begin{pmatrix}}\newcommand{\epm}{\end{pmatrix}}
\newcommand{\trans}{{\mathsf T}}
\def\blem{\begin{lemma}}\def\elem{\end{lemma}}
\def\bthm{\begin{theorem}}\def\ethm{\end{theorem}}
\def\bcor{\begin{corollary}}\def\ecor{\end{corollary}}
\def\beq{\begin{equation}}\def\eeq{\end{equation}}
\def\beqq{\begin{equation*}}\def\eeqq{\end{equation*}}
\def\bal{\begin{align}}\def\eal{\end{align}}
\def\bpf{\begin{proof}}\def\epf{\end{proof}}
\def\bex{\begin{example}}\def\eex{\end{example}}
\def\brem{\begin{remark}}\def\erem{\end{remark}}
\def\bass{\begin{assumption}}\def\eass{\end{assumption}}
\def\bprop{\begin{proposition}}\def\eprop{\end{proposition}}
\def\bdefi{\begin{definition}}\def\edefi{\end{definition}}
\begin{document}
	\title{Bifurcation and Asymptotics of Cubically Nonlinear Transverse Magnetic Surface Plasmon Polaritons}
	\date{\today}
	\author{ Tom\'a\v{s} Dohnal$^*$ and Runan He   }
	\affil{Institut f\"{u}r Mathematik,  Martin-Luther-Universit\"{a}t Halle-Wittenberg,\\ 06120 Halle (Saale), Germany\\
		\small{tomas.dohnal@mathematik.uni-halle.de$^*$, runan.he@mathematik.uni-halle.de}}

	\maketitle
	\abstract{Linear Maxwell equations for transverse magnetic (TM)  polarized fields support single frequency surface plasmon polaritons (SPPs) localized at the interface of a metal and a dielectric. Metals are typically dispersive, i.e. the dielectric function depends on the frequency. We prove the bifurcation of localized SPPs in dispersive media in the presence of a cubic nonlinearity and provide an asymptotic expansion of the solution and the frequency. The problem  is reduced to a system of nonlinear differential equations in one spatial dimension by assuming a plane wave dependence in the direction tangential to the (flat) interfaces. The number of interfaces is arbitrary and the nonlinear  system is solved in a subspace of functions with the $H^1$-Sobolev regularity in each material layer. The corresponding linear system is an operator pencil in the frequency parameter due to the material dispersion. 
	The studied bifurcation occurs at a simple isolated eigenvalue of the pencil. For geometries consisting of two or three homogeneous layers we provide explicit conditions on the existence of eigenvalues and on their simpleness and isolatedness. Real frequencies are shown to exist in the nonlinear setting in the case of $\PT$-symmetric materials. We also apply a finite difference numerical method  to the nonlinear system and compute bifurcating curves.
	}

	\medskip
\textbf{Keywords:} Maxwell equations, surface plasmon, Kerr nonlinearity, bifurcation, operator pencil, PT-symmetry, asymptotic expansion
	\vskip0.2truecm

	\section{Introduction}\label{S:intro}
	In this article we study time harmonic electromagnetic waves at one or more interfaces between layers of nonlinear and dispersive media. In applications these are typically layers of dielectric and metallic materials and the waves are referred to as surface plasmon polaritons (SPPs). The underlying model is given by Maxwell's equations with the absence of free charges, i.e.
	\begin{equation}\label{MaxwellEq}
		\partial_t\mathcal{D}=\nabla\times\mathcal{H},\quad	\mu_0\partial_t\mathcal{H}=-\nabla\times\mathcal{E},\quad \nabla\cdot\mathcal{D}=\nabla\cdot\mathcal{H}=0,
	\end{equation}
	where $\mathcal{E}$ and $\mathcal{H}$ are the electric and magnetic field respectively, $\mathcal{D}$ is the electric displacement field depending on $\mathcal{E}$ in a Kerr nonlinear and nonlocal relation
	\begin{equation}\label{Dispalacement}
		\begin{split}
			&\mathcal{D}(x,y,z,t)=\epsilon_0\mathcal{E}(x,y,z,t)+\epsilon_0\int_{\mathbb{R}}\chi^{(1)}\left(x, y, z, t-s\right)\mathcal{E}(x,y,z,s)\mathrm{d}s\\
			&\quad +\epsilon_0\int_{\mathbb{R}^3}\chi^{(3)}\left(x, y, z, t-s_1, t-s_2, t-s_3\right)\left(\left(\mathcal{E}(x,y,z,s_2)\cdot\mathcal{E}(x,y,z,s_3)\right)\mathcal{E}(x,y,z,s_1)\right)\mathrm{d}(s_1,s_2,s_3),
		\end{split}
	\end{equation}
	$\chi^{(1)}:\mathbb{R}^4\to\mathbb{R}$, $\chi^{(3)}:\mathbb{R}^6\to\mathbb{R}$, $\chi^{(1)}(\cdot,\tau)=0$ for $\tau<0$, and $\chi^{(3)}(\cdot,\tau_1,\tau_2,\tau_3)=0$ if $\tau_1<0$ or $\tau_2<0$ or $\tau_3<0$.
	The constants $\epsilon_0$ and $\mu_0$ are the permittivity and the permeability of the free space respectively.

	For a monochromatic field
	$$
	\left(\mathcal{E},\ \mathcal{H},\ \mathcal{D}\right)(x,y,z,t)=\left(E,\ H,\ D\right)(x,y,z)e^{-\mathrm{i}\omega t}+\left(\overline{E},\ \overline{H},\ \overline{D}\right)(x,y,z)e^{\mathrm{i}\omega t},
	$$
	with a real frequency $\omega$, one can obtain a nonlinear eigenvalue problem in $\left(\omega,\ \left(E,\ H\right)\right)$ by neglecting higher harmonics (terms proportional to $e^{3\mathrm{i}\omega t}$ and $e^{-3\mathrm{i}\omega t}$), see \cite{shen1984},
	\begin{subequations}\label{TimeHarmonicME}
		\begin{equation}\label{TimeHarmonicME-1}
			\nabla\times H=-\mathrm{i}\omega D,\quad \nabla\times E=\mathrm{i}\omega\mu_0 H,\quad \nabla\cdot D=\nabla\cdot H=0
		\end{equation}
		\begin{equation}\label{TimeHarmonicME-2}
			\begin{split}
			D=~&\epsilon_0\left(1+\hat{\chi}^{(1)}(x,y,z,\omega)\right)E \\
			+~&\epsilon_0\left((\hat{\chi}^{(3)}(x,y,z,-\omega,\omega,\omega)+\hat{\chi}^{(3)}(x,y,z,\omega,-\omega,\omega))|E|^2E+ \hat{\chi}^{(3)}(x,y,z,\omega,\omega,-\omega)(E\cdot E)\overline{E}\right).
		\end{split}
		\end{equation}
	\end{subequations}
	Here $|E|^2=E\cdot\overline{E}$ and $\hat{f}$ is the Fourier transform of $f$ in time, $\hat{f}(\omega):=\int_\R f(t)e^{\ri \omega t}\dd t$. Clearly, if $\omega \neq 0$, then all solutions of the first two equations in \eqref{TimeHarmonicME-1} satisfy the last two equations, i.e. the divergence conditions.

	For notational simplicity we assume
	$$\hat{\chi}^{(3)}(x,y,z,-\omega,\omega,\omega)=\hat{\chi}^{(3)}(x,y,z,\omega,-\omega,\omega)=\hat{\chi}^{(3)}(x,y,z,\omega,\omega,-\omega)=:\hat{\chi}^{(3)}(x,y,z,\omega).$$
	Then \eqref{TimeHarmonicME-2} becomes
	$$	D=\epsilon_0\left(1+\hat{\chi}^{(1)}(x,y,z,\omega)\right)E+\epsilon_0\hat{\chi}^{(3)}(x,y,z,\omega)\left(2|E|^2E+(E\cdot E)\overline{E}\right).$$
Note that the analysis can be carried out in the same way without this simplification.

\begin{rem}
	The removal of higher harmonics occurs automatically if one uses a time averaged model for the nonlinear part of the displacement field tailored for the frequency $\omega \in \R$. This model has been used, for instance, in \cite{MR2022, Stuart_93, Sutherland03}.  In detail, one sets
		\begin{equation}\label{D-t-avg}
			\begin{split}
				\mathcal{D}(x,y,z,t)&=\epsilon_0\mathcal{E}(x,y,z,t)+\epsilon_0\int_{\mathbb{R}}\chi^{(1)}\left(x, y, z, t-s\right)\mathcal{E}(x,y,z,s)\mathrm{d}s\\
				&+\epsilon_0\chi^{(3)}\left(x,y,z\right)\langle |\mathcal{E}|^2\rangle_\omega(x,y,z)\mathcal{E}(x,y,z,t),
			\end{split}
		\end{equation}
		where $\langle f \rangle_\omega :=\frac{\omega}{2\pi}\int_0^\frac{2\pi}{\omega}f(t)\dd t$. With this model and the monochromatic ansatz one obtains $D=\epsilon_0\left(1+\hat{\chi}^{(1)}(x,y,z,\omega)\right)E+\epsilon_0\chi^{(3)}(x,y,z)|E|^2E$.
\end{rem}

\bigskip
	We consider  one-dimensional structures, where $\hat{\chi}^{(1)}$ and $\hat{\chi}^{(3)}$ are independent of $y$ and $z$, i.e.
	$$\hat{\chi}^{(j)}(x,y,z,\omega)= \hat{\chi}^{(j)}(x,\omega), \ j =1,3,$$
	and study the transverse magnetic fields
	\begin{equation}\label{TM-setting}
		\begin{split}
			E(x,y,z)&=\left(\mu_0u_1(x),  \mu_0u_2(x),  0\right)^Te^{\mathrm{i}ky},\\
			H(x,y,z)&=\left(0,  0,  u_3(x)\right)^Te^{\mathrm{i}ky}
		\end{split}
	\end{equation}
	with $k\in\R$ fixed. The factor $\mu_0$ has been added for convenience. This leads to the vectorial nonlinear system
	\begin{equation}\label{system}
		L_k(x,\omega)u:=Au-B(x,\omega)u=h(x,\omega, u), \quad u:=\left(u_1, u_2, u_3\right)^T, \quad x\in \R,
	\end{equation}
	where \begin{equation}\label{operator}
		\begin{split}
			A:=\begin{pmatrix}
				0\ & 0 \ &\mathrm{i}k \\
				0\ & 0 \ &-\partial_{x} \\
				-\mathrm{i}k\ &\partial_x \ &0
			\end{pmatrix}=\nabla_k\times,\quad \nabla_k=\left(\partial_x,\ \mathrm{i}k,\ 0\right)^T, \qquad \qquad \\
			B(x,\omega):=\begin{pmatrix}
				\ri V(x,\omega)\ &0 \ &0\\
				0\ & \ri V(x,\omega) &0\\
				0\ &0\ & \ri\omega
			\end{pmatrix},\quad
			V(x,\omega):=-\omega \epsilon_0\mu_0\left(1+\hat{\chi}^{(1)}(x,\omega)\right),\\
			h(x, \omega, u):=-\ri \omega \epsilon_0\mu_0^3\hat{\chi}^{(3)}(x,\omega)\left(2|u_E|^2u_E+(u_E\cdot u_E)\overline{u_E}\right),\qquad\qquad\qquad\quad  \end{split}\end{equation}
	and with $u_E := (u_1,u_2,0)^T$.
	In this reduction, solutions $u$ of system \eqref{system} satisfy $\nabla_k \cdot (B(\cdot,\omega)u+h(\cdot,\omega,u))=0$, i.e. $\partial_x D_1 + \partial_y D_2 =0$. Due to the $z$-independence of $E$ and $H$ and because $H_1=H_2=0$, we obtain the divergence conditions $\nabla\cdot D=\nabla \cdot H=0$ as expected. The operator $L_k(x,\omega)$ and the matrix $B(x,\omega)$ will often be abbreviated via $L_k(\omega)$ and $B(\omega)$.

	\begin{rem}\label{rem:cplx-om}
		Note that problem \eqref{system} can be considered also for $\omega=\omega_R +\ri \omega_I \in \C$. However, the resulting field
	$$
	\left(\mathcal{E},\ \mathcal{H},\ \mathcal{D}\right)(x,y,z,t)=\left(E,\ H,\ D\right)(x,y,z)e^{-\mathrm{i}\omega t}+\left(\overline{E},\ \overline{H},\ \overline{D}\right)(x,y,z)e^{\mathrm{i}\overline{\omega} t},
	$$
	solves the nonlinear Maxwell equations \eqref{MaxwellEq}, \eqref{Dispalacement} (with the harmonics $e^{-3 \ri \omega_R t}$ and $e^{3 \ri \omega_R t}$ neglected) or \eqref{MaxwellEq}, \eqref{D-t-avg} only if $\omega_I=0$. In fact, for $\omega_I\neq 0$, the averaged model \eqref{D-t-avg} is not defined. In \eqref{MaxwellEq}, \eqref{Dispalacement} the linear terms are proportional to $e^{\omega_I t}$ while the nonlinear terms are proportional to $e^{3\omega_I t}$. This is clearly in contrast with the linear problem ($\hat{\chi}^{(3)}=0$), where complex frequencies are allowed. In fact, the classical linear surface plasmons on metallic surfaces have frequencies with $\Imag(\omega)< 0$ resulting in an exponential decay in time.
    \end{rem}

	In our analysis we will first prove bifurcations of solutions of \eqref{system}  with complex $\omega$. Second, we show that under the so called $\PT$-symmetry assumption on the functions $\hat{\chi}^{(1,3)}(\cdot,\omega)$ bifurcations with real $\omega$ can be proved.  The $\PT$-symmetry results in a spatial balance of loss and gain of the material.

	The concrete form of the functions  $\hat{\chi}^{(1,3)}$ will not be important for our results. These material functions are given by measurements for the given material. A classical description of (homogeneous)  metals is via the Drude model \cite{Ashcroft76}
	\begin{equation}\label{Drude}
		\hat{\chi}^{(1)}(x,\omega)= -\frac{\omega_p^2}{\omega^2+\ri\gamma\omega},
	\end{equation}
	where $\omega_p\in\R^+$ and $\gamma\in\R$. In dielectrics, on the other hand, one often uses an $\omega$-independent approximation of $\hat{\chi}^{(1)}$. Such an approximation is valid in a certain range of operating frequencies. In homogenous dielectrics one then  has $\hat{\chi}^{(1)}=\text{const.}\in \R.$ The realness of $\hat{\chi}^{(1)}$ means that the material is conservative, i.e no energy loss or gain is present. Also $\hat{\chi}^{(3)}$ typically takes the form of a rational function, see Chapter 1 in \cite{boyd}, or it is again assumed to be a real constant (for homogenous materials).

SPPs have been observed and studied since the 1950s, see e.g \cite{raether2006surface},  \cite{ritchie1973surface} and \cite{Ritchie-1957}. Classical SPPs arise from the interaction between an illuminating wave and the free electrons of a conductor and generate a highly confined electromagnetic field at the interface of a metal and a dielectric. This makes them useful in sensing (see \cite{anker2008biosensing}, \cite{homola1999surface} or for example \cite{guo2015strategies}). The strong localization is useful also in applications requiring light propagation in sub-wavelength geometries \cite{barnes2003surface}. In addition, the strong field near the interface enhances the nonlinear response of the medium, see \cite{Smolyaninov-2005}, \cite{Valev2012}, and SPPs propagating in the form of solitons or solitary waves have been studied \cite{Davoyan:09}, \cite{Feigenbaum:07}. These and similar studies give a numerical or a formal analytical evidence of such solutions of Maxwell's equations. We give an analytical proof of the existence of localized time harmonic SPPs bifurcating for their linear counterparts. In the case of the  transverse electric (TE) polarized fields this bifurcation analysis was carried out in \cite{dohnal2021eigenvalue,DR-corr-22}. Here we address the TM-polarization. Note that in the simplest setting of a single interface between a linear homogenous metal and a linear homogenous dielectric, only TM-SPPs exist.

	Interfaces of materials can be understood as waveguides. In fact, a classical waveguide is given by a material sandwiched between two other materials. Such a setting with dielectric homogeneous materials was studied in the Maxwell's equations both for the TE and TM polarization with the finite material being Kerr nonlinear in \cite{tikhov2022theory}, \cite{valovik2016novel} and \cite{valovik2008propagation}.
	Nonlinear dispersion relations were derived in these references. In our case the layers need not be homogenous, all layers can be nonlinear and we allow $\omega$-dependence of the material constants. In addition, the number of layers is arbitrary in our bifurcation result.

	As explained in Remark \ref{rem:cplx-om}, in our time-independent nonlinear equation only solutions with real frequencies $\omega$ produce solutions of the nonlinear Maxwell's equations. We achieve the realness of $\omega$ by working in $\PT$-symmetric materials.  $\PT$-symmetry was originally proposed in quantum mechanics in \cite{Bender-Boettcher-1998}. In the context of SPPs it has been used in  \cite{AD14}, \cite{Barton2018}, \cite{dohnal2021eigenvalue,DR-corr-22},  \cite{GP-2006}, and  \cite{Han_2014}. Mathematically, we reduce the bifurcation problem to one in a $\PT$-symmetric subspace. Such approach was used also in \cite{DS2016} in a general bifurcation problem but with a linear dependence on the bifurcation parameter. In our case the bifurcation parameter is $\omega$ and the dependence on it in the SPP case is generally nonlinear.

	\subsection{Main Results}

We consider interfaces of two or more media. These are generally inhomogeneous but with a smooth dependence of $\hat{\chi}^{(1,3)}(x,\omega)$ on $x$ within each layer. In the case of $m$ material layers we write
$$\R=\overline{\cup_{j=1}^m I_j}, \ I_j:=(x_{j-1},x_{j}), $$
where $x_0=-\infty, x_m=\infty$, and $x_{j-1} <x_j$ for all $j=1,\dots,m$.	We use the following notation:
	$$\langle \cdot,\cdot\rangle:=\langle\cdot,\cdot \rangle_{L^2(\R,\C^n)}, \quad \|\cdot\|:=\|\cdot\|_{L^2(\R)},$$
	$$\mathcal{H}^1:=\{f\in L^2(\R,\C^n): f|_{I_j}\in H^1(I_j,\C^n) \ \forall j =1,\dots,m\}, \ n\in \N,$$
	$$\|f\|_{\cH^1}:=\sum_{j=1}^m\|f\|_{H^1(I_j)},$$
	where the value of $n$ depends on the context. The cases $n=1,2,3$ appear in the paper.

	Let us now fix the functional analytic setting. Working in the Hilbert space $L^2(\R, \C^3)$, the domain of the operator $A$ is naturally chosen as
	\begin{equation}\label{domain}
		\begin{split}
			D(A):=\left\{u\in L^2\left(\mathbb{R},\mathbb{C}^3\right):\quad \nabla_k\times\ u\in L^2\left(\mathbb{R}, \mathbb{C}^3\right)\right\}.
		\end{split}
	\end{equation}
	Assuming that $\hat{\chi}_1(\cdot,\omega)\in L^\infty(\R)$, we clearly have that $B(\cdot,\omega):D(A)\to L^2(\R, \C^3)$ is bounded and $D(L_k(\omega))=D(A)$ for every $\omega$ in the domain of $V(x,\cdot)$. The range of $L_k(\omega)$ is $L^2(\R, \C^3)$ for every $\omega$ in the domain of $V(x,\cdot)$. Note that $k$ is fixed throughout the paper and the operator $L_k$ is considered as a pencil with respect to the parameter $\omega$.

	One can easily see that
	\begin{equation}\label{domain2}
		D(A)=\left\{u\in L^2\left(\mathbb{R},\mathbb{C}^3\right):\quad u_2,\ u_3\in H^1\left(\mathbb{R}, \mathbb{C}\right)\right\}.\end{equation}
For the linear problem, it is useful to study the operator $L_k(x,\omega)$ in each material layer separately. Clearly, we have
	\begin{equation}\label{domain3}
		\begin{aligned}
			D(A)=&\left\{u\in L^2\left(\R,\mathbb{C}^3\right):\  u_2,\ u_3\in H^1\left(I_j, \mathbb{C}\right) \ \forall j\in \{1,\dots,m\} \text{  and } \eqref{IFCs} \text{ holds}\right\}.
		\end{aligned}
	\end{equation}
	\begin{equation}\label{IFCs}
		\llbracket u_2 \rrbracket =\llbracket u_3\rrbracket=0 \ \text{  at  } x_j \ \forall j\in \{1,\dots,m-1\},
	\end{equation}
	where
	\[\llbracket{f}\rrbracket=0\ \text{  at  } x_j \text{  means  } \lim_{x\to {x_j}^+}f\left(x\right)=\lim_{x\to x_j^-}f(x).\]
	We assume $V\in L^\infty(\R,\C)$ for all $\omega \in \Omega \subset \C$ and define
	$$V_j(\cdot, \omega):=V(\cdot,\omega)|_{I_j}, \quad j\in \{1,\dots,m\}$$
	for each $\omega \in \Omega.$
		Defining $\Omega_j:=\{\omega \in \C: V_j(\cdot,\omega)\in L^\infty(I_j))\}$, we have

	$$\Omega =\cap_{j=1}^m\Omega_j.$$
	Note that typically $\Omega \neq \C$ as $\hat{\chi}_1$ usually has poles in the $\omega$-variable, see e.g. the Drude model.

		\noindent The following assumptions are used in Theorems \ref{mainthm_loc_bifurc} and \ref{T:PT_sym}.
	\begin{enumerate}
		\item[(A-E)] $\omega_0\neq 0$ is an algebraically simple, isolated eigenvalue of the operator pencil $L_k$;
		\item[(A-T)] $\langle\pa_\omega B(\cdot,\omega_0)\varphi_0, \varphi_0^*\rangle\neq 0$.
	\end{enumerate}
	There exists $\delta>0$ such that
	\begin{enumerate}
		\item[(A-V)]  $V_j(x,\cdot):\C\to \C$ is holomorphic on $B_\delta(\omega_0)\subset \C$ for each $j\in \{1,\dots,m\}$ and almost every $x\in I_j$ and
		$$V_j(\cdot,\omega), \frac{1}{V_j(\cdot,\omega)}, \pa_\omega V_j(\cdot,\omega), \pa^2_\omega V_j(\cdot,\omega) \in W^{1,\infty}(I_j,\C) \quad \forall j\in\{1,\dots,m\}, \omega\in B_\delta(\omega_0);$$
		\item[(A-Na)] $\C \ni \omega \mapsto\hat{\chi}^{(3)}(\cdot,\omega)\in W^{1,\infty}(I_j,\C)$ is Lipschitz continuous on $B_\delta(\omega_0)$ for each $j\in \{1,\dots,m\}$, i.e. there is $L_a>0$ such that $$\max_{j\in\{1,\dots,m\}}\|\hat{\chi}^{(3)}(\cdot,\omega_1)-\hat{\chi}^{(3)}(\cdot,\omega_2)\|_{W^{1,\infty}(I_j)}\leq L_a |\omega_1-\omega_2|$$ for all $\omega_1,\omega_2\in B_\delta(\omega_0)$;
	\end{enumerate}

	\bigskip
	\bigskip

	\begin{thm}\label{mainthm_loc_bifurc}
		Let $k \in\R$. Assume (A-E), (A-V), (A-T), and (A-Na). Let $\varphi_0 \in D(A)$ be an eigenfunction corresponding to $\omega_0$ normalized to $\|\varphi_0\|=1$ and  $\varphi_0^*$ the eigenfunction of the adjoint $L_k^*$ with the eigenvalue $\overline{\omega_0}$ normalized to $\langle\varphi_0, \varphi_0^*\rangle =1.$

		Then there is a unique branch of solutions $(\omega,u) \in \C\times (D(A)\cap \mathcal{H}^1)$ of \eqref{system}
		bifurcating from $(\omega_0,0)$. There exists $\varepsilon_0>0$ s.t. for any $\varepsilon\in(0,\varepsilon_0)$ the solution $(\omega,u)$ with $\langle u, \varphi_0^*\rangle = \eps^{1/2}$ has the form
		\begin{equation}\label{E:om-u-exp-A}
			\omega=\omega_0+\varepsilon\nu+\varepsilon^{2}\sigma,\qquad\quad u=\varepsilon^\frac{1}{2}\varphi_0+\varepsilon^\frac{3}{2}\phi+\varepsilon^\frac{5}{2}\psi,
		\end{equation}
		where
		\beq\label{E:nuA}
		\nu = -\frac{\langle h(\cdot,\omega_0,\varphi_0),\varphi_0^*\rangle}{\langle\pa_\omega B(\cdot,\omega_0)\varphi_0, \varphi_0^*\rangle},
		\eeq
		$\phi$ is a unique solution  in $D(A)\cap \cH^1\cap \langle\varphi_0^*\rangle^\perp$ of
		\beq	\label{E:phi-eq-A}
		L_k(x,\omega_0)\phi = h(x,\omega_0,\varphi_0)+\nu \pa_\omega B(x,\omega_0)\varphi_0,
		\eeq
		$\sigma \in\C$ and $\psi \in D(A)\cap \cH^1\cap \langle\varphi_0^*\rangle^\perp.$
	\end{thm}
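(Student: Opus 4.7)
The plan is a Lyapunov--Schmidt reduction tailored to the cubic scaling of $h$. Because of (A-E), $L_k(\omega_0)$ is Fredholm of index zero on $D(A)$ with one-dimensional kernel $\langle\varphi_0\rangle$ and cokernel $\langle\varphi_0^*\rangle$, so the spectral projector $P:=\varphi_0\langle\cdot,\varphi_0^*\rangle$ provides the decomposition $D(A)=\langle\varphi_0\rangle\oplus(D(A)\cap\langle\varphi_0^*\rangle^\perp)$ together with a bounded partial inverse $L_k(\omega_0)^{-1}\colon\langle\varphi_0^*\rangle^\perp\to D(A)\cap\langle\varphi_0^*\rangle^\perp$. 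Since $h$ is cubic in $u$, the balance $L_k(\omega)u\sim h(\cdot,\omega,u)$ forces $u=O(\eps^{1/2})$ and $\omega-\omega_0=O(\eps)$, which motivates the ansatz \eqref{E:om-u-exp-A} with $\phi,\psi\in D(A)\cap\langle\varphi_0^*\rangle^\perp$; the normalization $\langle u,\varphi_0^*\rangle=\eps^{1/2}$ is then automatic.

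Substituting this ansatz into \eqref{system} and Taylor-expanding $B(\cdot,\omega)$ around $\omega_0$ to second order (justified by the holomorphy and $W^{1,\infty}$-bounds in (A-V)), I would match powers of $\eps$. The $\eps^{1/2}$-identity $L_k(\omega_0)\varphi_0=0$ holds by hypothesis. Collecting the $\eps^{3/2}$-terms yields exactly \eqref{E:phi-eq-A}; projecting with $\langle\cdot,\varphi_0^*\rangle$ and using (A-T) gives \eqref{E:nuA} for $\nu$, after which $\phi$ is recovered uniquely by the partial inverse of $L_k(\omega_0)$. Solvability requires $h(\cdot,\omega_0,\varphi_0)\in L^2(\R,\C^3)$, which follows from (A-Na) and the piecewise Sobolev embedding $H^1(I_j)\hookrightarrow L^\infty(I_j)$ once $\varphi_0$ is known to lie in $\cH^1$ (see the regularity step below).

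With $\nu$ and $\phi$ fixed, the remaining equation at order $\eps^{5/2}$ is a coupled system for $(\sigma,\psi)\in\C\times(D(A)\cap\cH^1\cap\langle\varphi_0^*\rangle^\perp)$. After dividing by $\eps^{5/2}$ and splitting by the projectors $P$ and $I-P$, it takes the schematic form
\[
\psi = L_k(\omega_0)^{-1}(I-P)\,\cG(\sigma,\psi;\eps),\qquad \sigma = \cS(\sigma,\psi;\eps),
\]
where $\cG$ and $\cS$ collect the second-order terms from the Taylor expansion of $B(\cdot,\omega)$, the expansion of $h$ around $(\omega_0,\varphi_0)$, and $O(\eps)$ remainders. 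Using (A-V), (A-Na), and the locally Lipschitz character of the cubic $h$ on bounded sets of $\cH^1$, the map $(\sigma,\psi)\mapsto(\cS,\cG)$ is a self-map of a closed ball of fixed radius in $\C\times(D(A)\cap\cH^1\cap\langle\varphi_0^*\rangle^\perp)$ whose Lipschitz constant tends to $0$ as $\eps\to 0$. The Banach fixed point theorem then yields a unique $(\sigma,\psi)$ for each $\eps\in(0,\eps_0)$ with $\eps_0$ small, which also gives uniqueness of the entire branch under the chosen normalization.

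The final step is to upgrade the $D(A)$-regularity to $\cH^1$. Membership in $D(A)$ only provides $u_2,u_3\in H^1(I_j)$; however, the first component of \eqref{system} reads $\ri k u_3 - \ri V(\cdot,\omega)u_1 = h_1(\cdot,\omega,u)$, so that $u_1=(ku_3+\ri h_1(\cdot,\omega,u))/V(\cdot,\omega)$ on each $I_j$. By (A-V) we have $1/V(\cdot,\omega)\in W^{1,\infty}(I_j)$, and by (A-Na) together with $u_2,u_3\in H^1(I_j)\hookrightarrow L^\infty(I_j)$ the right-hand side lies in $H^1(I_j)$; hence $u_1\in H^1(I_j)$ and $u\in\cH^1$. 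The same argument applied to $L_k(\omega_0)\varphi_0=0$ gives $\varphi_0\in\cH^1$. The main technical obstacle I foresee is the uniform control of the quadratic Taylor remainder of the pencil $B(\cdot,\omega)$ and of the $\omega$- and $u$-dependent nonlinearity in the $D(A)\to L^2$ operator norm over $B_\delta(\omega_0)$; the assumptions (A-V) and (A-Na) are precisely calibrated to yield Lipschitz estimates with small constants in $\eps$, which is exactly what the contraction argument needs.
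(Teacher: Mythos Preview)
Your overall strategy---Lyapunov--Schmidt splitting via $P$ and $I-P$, the cubic scaling ansatz, second-order Taylor expansion of $B(\cdot,\omega)$, and a fixed-point argument for the remainder $(\sigma,\psi)$---matches the paper. There is, however, a genuine gap in your contraction step. You claim that the combined map $(\sigma,\psi)\mapsto(\cS,L_k(\omega_0)^{-1}(I-P)\cG)$ has Lipschitz constant tending to $0$ as $\eps\to 0$. This is false as stated: after the $\eps^{3/2}$-terms have been extracted, the $(I-P)$-equation for $\psi$ still contains the term $\sigma\,Q_0\partial_\omega B(\cdot,\omega_0)\varphi_0$, coming from $(\omega-\omega_0)\partial_\omega B(\cdot,\omega_0)$ acting on $\eps^{1/2}\varphi_0$ with $\omega-\omega_0=\eps\nu+\eps^2\sigma$. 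This contributes $(\sigma_1-\sigma_2)Q_0\partial_\omega B(\cdot,\omega_0)\varphi_0$ to $\cG(\sigma_1,\psi)-\cG(\sigma_2,\psi)$, which is $O(1)$ in $\eps$, so the simultaneous contraction does not close.

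The paper deals with exactly this obstruction by a \emph{nested} fixed point: first solve the $\psi$-equation for $\psi=\psi(\sigma)$ with $\sigma$ frozen (contraction constant $O(\eps)$ in $\psi$), then show $\sigma\mapsto\psi(\sigma)$ is Lipschitz with an $O(1)$ constant, and finally run the contraction for $\sigma$ alone in the $P$-equation, where the structure yields an $O(\eps)$ Lipschitz constant. The paper explicitly remarks that the simultaneous route can be rescued only by substituting the $\sigma$-equation back into the $\psi$-equation to cancel the $O(1)$ term, which you do not do.

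A secondary issue: your final regularity upgrade $u_1=(ku_3+\ri h_1(\cdot,\omega,u))/V(\cdot,\omega)$ is circular, since $h_1$ depends cubically on $u_1$ (recall $u_E=(u_1,u_2,0)^T$, not $(u_2,u_3,0)^T$). The paper instead runs the entire fixed point in $D(A)\cap\cH^1$ from the outset, proving that the \emph{linear} partial inverse $(Q_0L_k(\omega_0)Q_0)^{-1}$ maps $Q_0\cH^1$ boundedly into $Q_0(\cH^1\cap D(A))$; there the first-component identity is $u_1=(ku_3+\ri r_1)/V$ with $r$ the given right-hand side, which is explicit and non-circular.
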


	\begin{thm}\label{T:PT_sym}
		In the setting of Theorem \ref{mainthm_loc_bifurc} assume in addition $\omega_0\in\R$ and the $\PT$-symmetry of the material, i.e. $V(x,\omega)=\overline{V(-x,\omega)}$ and $\hat{\chi}^{(3)}(x,\omega)=\overline{\hat{\chi}^{(3)}(-x,\omega)}$ for all $x\in \R$ and all $\omega \in (\omega_0-\delta,\omega_0+\delta)$ with some $\delta>0$. Then the bifurcating solution family with $\eps\in (0,\eps_0)$ satisfies $\omega \in \R$ and $u$ can be chosen $\PT$ symmetric, i.e. $u(x)=\overline{u(-x)}$.
	\end{thm}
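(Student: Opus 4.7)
My plan is to combine the uniqueness assertion of Theorem~\ref{mainthm_loc_bifurc} with the $\PT$-invariance of the nonlinear problem at real $\omega$. Introduce the antilinear involution $\cJ$ on $L^2(\R,\C^3)$ defined by $(\cJ u)(x):=\overline{u(-x)}$, so $\cJ^2=\Id$ and $\langle\cJ f,\cJ g\rangle=\overline{\langle f,g\rangle}$. A direct pointwise computation using the form of $A$ in \eqref{operator} gives $A\cJ=-\cJ A$. The assumption $\omega_0\in\R$ together with $V(x,\omega_0)=\overline{V(-x,\omega_0)}$ yields $B(\cdot,\omega_0)\cJ=-\cJ B(\cdot,\omega_0)$ (the sign comes from $\overline{iV(-x,\omega_0)}=-iV(x,\omega_0)$ and $\overline{i\omega_0}=-i\omega_0$), and the $\PT$-symmetry of $\hat{\chi}^{(3)}$ combined with $\omega_0\in\R$ gives the nonlinear identity $\cJ h(\cdot,\omega_0,u)=-h(\cdot,\omega_0,\cJ u)$. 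Hence
\[
L_k(\omega_0)\cJ=-\cJ L_k(\omega_0),\qquad L_k(\omega_0)^\ast\cJ=-\cJ L_k(\omega_0)^\ast,
\]
and the same anticommutation holds for $\pa_\omega B(\cdot,\omega_0)$, since differentiating $V(x,\omega)=\overline{V(-x,\omega)}$ in $\omega$ at $\omega_0\in\R$ gives $\pa_\omega V(x,\omega_0)=\overline{\pa_\omega V(-x,\omega_0)}$.

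Second, choose $\PT$-symmetric $\varphi_0$ and $\varphi_0^\ast$. Because $\cJ$ preserves $\ker L_k(\omega_0)$ antilinearly, the simpleness in (A-E) forces $\cJ\varphi_0=\alpha\varphi_0$ for some $\alpha\in\C$ with $|\alpha|=1$ (from $\cJ^2=\Id$); rescaling $\varphi_0\mapsto e^{\mathrm{i}\arg(\alpha)/2}\varphi_0$ gives $\cJ\varphi_0=\varphi_0$. The same argument applied to $L_k(\omega_0)^\ast$ yields a $\PT$-symmetric $\varphi_0^\ast$. The pairing $\langle\varphi_0,\varphi_0^\ast\rangle$ is then automatically real (using $\langle\cJ f,\cJ g\rangle=\overline{\langle f,g\rangle}$), and the normalization $\langle\varphi_0,\varphi_0^\ast\rangle=1$ is attained.

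Third, I show that the coefficients in \eqref{E:om-u-exp-A} are real respectively $\PT$-symmetric. Both $h(\cdot,\omega_0,\varphi_0)$ and $\pa_\omega B(\cdot,\omega_0)\varphi_0$ lie in the anti-$\PT$ set $\{f:\cJ f=-f\}$ by the intertwining identities of Step~1 and $\cJ\varphi_0=\varphi_0$. For any anti-$\PT$ $f$, $\langle f,\varphi_0^\ast\rangle=\langle-\cJ f,\varphi_0^\ast\rangle=-\overline{\langle f,\cJ\varphi_0^\ast\rangle}=-\overline{\langle f,\varphi_0^\ast\rangle}\in\mathrm{i}\R$. Thus the ratio in \eqref{E:nuA} is real: $\nu\in\R$. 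Then the right-hand side of \eqref{E:phi-eq-A} is anti-$\PT$, so $\cJ\phi$ solves the same equation as $\phi$ and satisfies $\langle\cJ\phi,\varphi_0^\ast\rangle=\overline{\langle\phi,\varphi_0^\ast\rangle}=0$; by the uniqueness of $\phi$ in $D(A)\cap\cH^1\cap\langle\varphi_0^\ast\rangle^\perp$, we get $\cJ\phi=\phi$. The same template applied to the $O(\eps^{5/2})$ equation obtained by substituting \eqref{E:om-u-exp-A} into \eqref{system} and Taylor-expanding $B$ and $h$ in $\omega$ gives: every source term (products of $\varphi_0,\phi,\nu$ with derivatives of $B$ and $h$ at $\omega_0$) is anti-$\PT$, the solvability condition (pairing with $\varphi_0^\ast$) forces the scalar $\sigma$ in \eqref{E:om-u-exp-A} to be real, and the remaining component equation determines $\psi$ uniquely in $D(A)\cap\cH^1\cap\langle\varphi_0^\ast\rangle^\perp$ with $\cJ\psi=\psi$ by the same uniqueness argument. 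Assembling, $\omega=\omega_0+\eps\nu+\eps^2\sigma\in\R$ and $\cJ u=u$.

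The main obstacle is the bookkeeping in the last step: one must write down the $O(\eps^{5/2})$ equation explicitly and verify term by term that every contribution (in particular the mixed cubic terms from $h$ evaluated at $\varphi_0+\eps\phi$ and the second $\omega$-derivative of $B$) respects the anti-$\PT$ structure. Once one has established the three intertwining identities in Step~1 and the $\PT$-symmetry of $\varphi_0,\varphi_0^\ast,\phi$, this is a mechanical propagation of the anti-$\PT$ property through each source term, with no new analytical difficulty beyond what is already required in the proof of Theorem~\ref{mainthm_loc_bifurc}.
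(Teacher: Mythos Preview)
Your anticommutation relations $L_k(\omega_0)\cJ=-\cJ L_k(\omega_0)$ and $\cJ h(\cdot,\omega_0,u)=-h(\cdot,\omega_0,\cJ u)$ are correct and equivalent to the paper's Lemma~\ref{Lem_PT} (the paper multiplies by $-\ri$ to turn anticommutation into commutation). Your choice of $\PT$-symmetric $\varphi_0,\varphi_0^\ast$ and the verification that $\nu\in\R$ and $\cJ\phi=\phi$ are also correct and more explicit than the paper, which simply refers to \cite{dohnal2021eigenvalue}.

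The gap is in the last step. The pair $(\sigma,\psi)$ is \emph{not} determined by a single linear ``$O(\eps^{5/2})$ equation'' whose source terms are products of $\varphi_0,\phi,\nu$ with derivatives of $B,h$ at $\omega_0$. It is the unique fixed point of the coupled nonlinear system \eqref{3_sys}--\eqref{4_sys}, in which $h$ is evaluated at the full $\omega=\omega_0+\eps\nu+\eps^2\sigma$ and the full $u=\eps^{1/2}\varphi_0+\eps^{3/2}\phi+\eps^{5/2}\psi$, and the Taylor remainder $I(\cdot,\omega)$ appears. A term-by-term check at $\omega_0$ therefore does not close. What does close---and this is what the paper does---is to rerun the contraction argument of Theorem~\ref{mainthm_loc_bifurc} inside the subset $\{\sigma\in\R\}\times\{\psi\in D(A)\cap\cH^1\cap\langle\varphi_0^\ast\rangle^\perp:\cJ\psi=\psi\}$: for $\sigma\in\R$ one has $\omega\in\R$, so the assumed $\PT$-symmetry of $V(\cdot,\omega)$ and $\hat\chi^{(3)}(\cdot,\omega)$ applies and your intertwining identities hold at this $\omega$; it is then straightforward to check that $G(\sigma,\cdot)$ preserves $\cJ\psi=\psi$ and that $S$ maps $\R$ to $\R$. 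Uniqueness of the fixed point in the full space then identifies it with the symmetric one. Note that the alternative route---apply $\cJ$ to the full equation and invoke the uniqueness of Theorem~\ref{mainthm_loc_bifurc} to conclude $(\omega,u)=(\bar\omega,\cJ u)$---would require $\hat\chi^{(3)}(x,\omega)=\overline{\hat\chi^{(3)}(-x,\bar\omega)}$ for \emph{complex} $\omega$; this follows for $V$ by holomorphy (A-V) and the identity theorem, but $\hat\chi^{(3)}$ is only assumed Lipschitz in (A-Na), so that shortcut is not available.
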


	\bigskip

		Our construction of a family of solutions bifurcating from an eigenvalue $\omega_0$ uses the  Fredholm property of $L_k(\omega_0)$ -  in particular the closedness of its range - as well as the algebraic simpleness of $\omega_0$. As $L_k$ is an operator pencil, some care has to be given to defining the spectrum as well as simpleness and isolatedness of eigenvalues. We proceed analogously to \cite{brown2022spectrum}, where the second order formulation corresponding to \eqref{system} was studied. Note that unlike in \cite{brown2022spectrum} we have not included the linear divergence condition $\nabla_k\cdot \left((1+\hat{\chi}^{(1)}(\cdot,\omega))u\right)=0$ in the domain $D(A)$. In \cite{brown2022spectrum} this was included to obtain $\nabla \cdot D=0$ also if $\omega=0$. In this paper we are interested only in $\omega$ near some eigenvalue $\omega_0\neq 0$. In fact, including the divergence condition in the definition of $D(A)$  would complicate the nonlinear analysis as the range of $L_k(\omega)$ would include only divergence free functions. The nonlinearity $h(\cdot,\omega,u)$ is, however, generally not divergence ($\nabla_k\cdot$) free for $u$ with  $\nabla_k\cdot \left((1+\hat{\chi}^{(1)}(\cdot,\omega))u\right)=0$.

	In applications $\hat{\chi}_1$ is often complex valued. As a result $L_k(\omega)$ is not self-adjoint and the existence of a real linear eigenvalue $\omega_0$ (or a real nonlinear eigenvalue $\omega$) cannot be expected. However, we show that for $\PT$ symmetric metamaterials (i.e. with a spatial balance of gain and loss) real linear eigenvalues can be obtained. These persist to real nonlinear eigenvalues. In this way nonlinear transverse magnetic surface plasmons with real frequencies are found. Such surface plasmons are conservative.

	The structure of the rest of the paper is as follows. In Section \ref{Section_Lep} we first define the spectrum for general operator pencils. Next, we derive explicit conditions for the existence of eigenvalues in the cases of two and three homogeneous layers. We also study their simplicity and isolatedness from the rest of the spectrum. A numerical example is provided where eigenvalues are computed and tested for simplicity and isolatedness. Theorems \ref{mainthm_loc_bifurc} and \ref{T:PT_sym} are proved in Sections \ref{bifurc} and \ref{S:PT-sym} respectively.
In Section \ref{S:numerics} we present a finite difference numerical method for the computation of the bifurcating solutions. Numerical results are shown to confirm the asymptotics given by Theorem \ref{mainthm_loc_bifurc} and Theorem \ref{T:PT_sym}. Finally, the two appendices provide some supporting calculations for the spectral analysis and for the bifurcation proof.

	\section{Linear spectral problem}\label{Section_Lep}

	Similarly to \cite{brown2022spectrum} we define the spectrum of the operator pencil $L_k$ using an additional parameter $\lambda$. Specifically, one considers the standard eigenvalue problem
	\begin{equation}\label{Lpb}
		L_k(\omega)u=\lambda u.
	\end{equation}
	Whether $\omega$ belongs to the spectrum (or its subset) of $L_k$ is defined below by the condition that $\lambda=0$ belongs to the corresponding set for $L_k(\omega)$ with $\omega$ fixed.

	We first define the \textbf{resolvent set}  of  the pencil $L_k$  by
	$$\rho(L_k):=\{\omega\in \Omega: L_k(\omega):D(A)\to L^2(\R, \C^3) \text{ is bijective with a bounded inverse}\}$$
	and  the \textbf{spectrum} of $L_k$  by
	\beq\label{E:Pspec}
	\sigma(L_k):=\Omega\setminus \rho(L_k).
	\eeq
	Note that $\sigma(L_k)=\{\omega\in \C: 0\in \sigma(L_k(\omega))\}$.

	The \textbf{point spectrum} is defined by
	$$\sigma_p(L_k):=\{\omega\in \Omega: \exists u \in D(A)\setminus \{0\}: L_k(\omega)u=0 \}.$$
	Elements of $\sigma_p(L_k)$ are called eigenvalues of $L_k$.

	The \textbf{discrete spectrum} is defined via
	\beq\label{E:sp-def-1D}
	\omega \in \sigma_d(L_k)  \ :\Leftrightarrow \ 0\in \sigma_d(L_k(\omega)), \omega \in \Omega,
	\eeq
	i.e. $\lambda=0$ is an isolated eigenvalue of finite algebraic multiplicity of the standard eigenvalue problem \eqref{Lpb} (with $\omega\in \Omega$ fixed).

	Here note that the algebraic multiplicity of $\lambda$ as an eigenvalue of $L_k(\omega)$ is called infinite if its geometric multiplicity, i.e.~$\dim\ker(L_k(\omega))$, is infinite or there exists a sequence $(u_n)_{n\in\N_0}$ of linearly independent elements $u_n\in D(A)$ such that $(L_k(\omega))u_{n+1}= u_n$ for all numbers $n\in \N_0$ with the function  $u_0\in \ker(L_k(\omega))\setminus \{0\}$. Otherwise the algebraic multiplicity is called finite.

	Finally $\omega \in \sigma_p(L_k)$ is called algebraically simple if  $\lambda=0$ is an algebraically simple eigenvalue of $L_k(\omega)$, i.e.
	if it is geometrically simple and there is no solution $u\in D(A)$ of
	\beq\label{E:gen-evec-eq}
	L_k(\omega)u= v,
	\eeq
	where $v\in \ker(L_k(\omega))\setminus \{0\}$ and such that $u$ and $v$ are linearly independent.

	In order for the spectral theory to be meaningful, we need $A$ (and hence also $L_k$) to be a closed and densely defined operator.
	We shall prove the closedness and the denseness results for the operator $L_k(\omega)$.
	\begin{prop}\label{closedness}
		The operators $A:\ D(A)\longrightarrow L^2\left(\mathbb{R},\mathbb{C}^3\right)$ and  $L_k(\omega):\ D(A)\longrightarrow L^2\left(\mathbb{R},\mathbb{C}^3\right)$ are closed and densely defined.
	\end{prop}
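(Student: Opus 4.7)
The plan is to verify density and closedness separately for $A$, and then to deduce the corresponding statements for $L_k(\omega)$ by a bounded perturbation argument. For density, I would simply observe that the characterization \eqref{domain2} yields $C_c^\infty(\R,\C^3)\subset D(A)$, and since $C_c^\infty(\R,\C^3)$ is dense in $L^2(\R,\C^3)$, so is $D(A)$. Because $D(L_k(\omega))=D(A)$, density for $L_k(\omega)$ is then automatic.

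For the closedness of $A$, I would use the sequential criterion. Pick $u_n\in D(A)$ with $u_n\to u$ and $Au_n\to v$ in $L^2(\R,\C^3)$, and read off the three components: $(Au_n)_1=\mathrm{i}k(u_n)_3$, $(Au_n)_2=-\partial_x(u_n)_3$, and $(Au_n)_3=-\mathrm{i}k(u_n)_1+\partial_x(u_n)_2$. From $(u_n)_3\to u_3$ in $L^2$ together with $-\partial_x(u_n)_3\to v_2$ in $L^2$, the distributional derivative passes to the limit, yielding $\partial_x u_3=-v_2\in L^2$, i.e.\ $u_3\in H^1(\R)$. The same argument applied to the third component, using $(u_n)_1\to u_1$ in $L^2$, gives $\partial_x u_2=v_3+\mathrm{i}k u_1\in L^2$, hence $u_2\in H^1(\R)$. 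By \eqref{domain2} this is exactly $u\in D(A)$, and $Au=v$ follows immediately from the componentwise limits, with $v_1=\mathrm{i}k u_3$ coming from the first component.

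For $L_k(\omega)=A-B(\omega)$, the standing assumption $V(\cdot,\omega)\in L^\infty(\R,\C)$ (noted just before \eqref{domain}) makes the matrix-valued multiplication operator $B(\omega)$ bounded on $L^2(\R,\C^3)$. I would then invoke the standard fact that a bounded perturbation of a closed operator on the same domain remains closed: if $u_n\to u$ and $L_k(\omega)u_n\to v$, continuity of $B(\omega)$ gives $Au_n=L_k(\omega)u_n+B(\omega)u_n\to v+B(\omega)u$, and closedness of $A$ then forces $u\in D(A)$ and $L_k(\omega)u=v$.

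I do not anticipate a serious obstacle; the proposition is really a check that the first-order operator $\nabla_k\times$ on its natural domain behaves like the usual weak-derivative setup. The only step where some care is needed is the extraction of $H^1$-regularity for $u_2$ and $u_3$ from $L^2$-convergence of $Au_n$, which relies crucially on the explicit block structure of $A$ (only two of the three components involve $\partial_x$) together with the characterization \eqref{domain2} of $D(A)$.
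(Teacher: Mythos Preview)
Your proof is correct. Both you and the paper deduce closedness of $L_k(\omega)$ from closedness of $A$ via the bounded perturbation $B(\omega)$, and both treat density as immediate. The difference lies in how closedness of $A$ itself is established: the paper argues abstractly, observing that $D(A)$ equipped with the graph inner product $\langle u,v\rangle_A=\langle u,v\rangle+\langle\nabla_k\times u,\nabla_k\times v\rangle$ is a Hilbert space (citing an external lemma), so a sequence $(u_n,Au_n)\to(u,v)$ in $L^2\times L^2$ is Cauchy in the graph norm and hence converges inside $D(A)$. You instead exploit the explicit block structure of $A$ and the characterization \eqref{domain2} to pass to the limit componentwise via distributional derivatives, recovering $u_2,u_3\in H^1(\R)$ directly. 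Your route is more elementary and self-contained; the paper's route is more in the spirit of general curl-type operators and would transfer verbatim to higher-dimensional or less explicit settings. For this particular one-dimensional operator, your componentwise argument is arguably the cleaner choice.
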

	\begin{proof}
		The denseness of $D(A)$ in $L^2(\R,\C^3)$ is obvious. We first show that $A: D(A)\to L^2(\R, \C^3)$ is closed. For this we recall that $D(A)$ equipped with the inner product
		\begin{equation}\label{innerproductnorms}
			\langle u,\ v \rangle_{A}:=\langle u,\ v \rangle_{L^2(\mathbb{R},\mathbb{C}^3)}+\langle \nabla_k\times u,\ \nabla_k\times v \rangle_{L^2(\mathbb{R},\mathbb{C}^3)}
		\end{equation}
		is a Hilbert space, which is shown completely analogously to Lemma 3.1 in  \cite{brown2022spectrum}. We note that the norm generated by $\langle \cdot, \cdot\rangle_{A}$ is the graph norm $$\|v\|_A=\left(\|v\|_{L^2(\R, \C^3)}^2 + \|\nabla_k \times v\|_{L^2(\R, \C^3)}^2 \right)^{1/2}$$
		and $A:D(A)\to L^2(\R,\C^3)$ is bounded if $D(A)$ is equipped with $\|\cdot\|_A$.

		Let now $(u_k)\subset D(A)$ and $(u_k,Au_k)\to (u,v)$ in $L^2(\R,\C^3)^2$. This implies that $(u_k)$ is a Cauchy sequence in $(D(A),\|\cdot\|_A)$ and hence $u\in D(A)$. By the continuity of $A$ we then get $Au_k\to Au$ in $L^2$ and hence $v=Au$.

		Since $V(\cdot,\omega)\in L^\infty(\R)$, we get that $B(\cdot,\omega):  D(A) \to L^2(\R, \C^3)$ is bounded, and conclude that the operator $L_k(\cdot,\omega)$ is a closed and densely defined operator in $L^2(\R, \C^3)$.
	\end{proof}

	\blem\label{L:phi0-reg}
	Let $\omega_0$ be an eigenvalue of $L_k$ and assume $\frac{1}{V_j(\cdot,\omega_0)}\in W^{1,\infty}(I_j,\C)$ for all $j\in\{1,\dots,m\}$. Then corresponding  eigenfunctions $\varphi_0\in D(A)\setminus \{0\}$ satisfy $\varphi_0\in \cH^1.$
	\elem
	\bpf
	We have $\varphi_{0,2}, \varphi_{0,3}\in H^1(I_j)$ for each $j$ by the definition of $D(A)$. The first component is given by
	$\varphi_{0,1}=\frac{k\varphi_{0,3}}{V(\cdot,\omega_0)}$, which is in $H^1(I_j)$ for each $j$ as $\frac{1}{V_j(\cdot,\omega_0)}\in W^{1,\infty}(I_j) $.
	\epf

\subsection{Homogeneous layers}

In the case of homogeneous layers, i.e.
$$V_j(\cdot,\omega)=V_j(\omega)\in \C, \quad j \in \{1,\dots,m\},$$
the fundamental system  for the linear part of \eqref{system} can be found explicitly and the condition for $\omega$ to be an eigenvalue reduces effectively to an algebraic equation.

We study eigenvalues in the case of homogeneous layers only outside the set
	$$\Omega_0:=\{\omega\in \Omega: \omega=0 \ \text{ or } \  V_j(\omega)= 0 \text{  for some } j \in \{1,\dots,m\}\}.$$
This set consists of eigenvalues of infinite multiplicity. Assuming namely $V_j(\omega)=0$, then $L_k\nabla_k \varphi=\nabla_k \times (\nabla_k \varphi)=0$ for each $\varphi\in C^\infty_c(I_j,\R)$ (curl of a gradient vanishes and $B(x,\omega)(v,w,0)^T=0$ for each $x\in I_j, v,w\in L^2(\R)$).

	In contrast, in \cite{brown2022spectrum}, where $D(A)$ includes the divergence condition, $\omega\in \Omega_0$ is an eigenvalue of infinite multiplicity only if $1+\hat{\chi}^{(1)}(\cdot,\omega)=0$ on one of the layers.

Next, we consider two special cases, namely $m=2 $ and $m=3$.

	\subsubsection{Two homogenous layers}\label{S:2layers}
	Without loss of generality we choose the two homogeneous layers $I_1:=\R_-:=(-\infty,0), I_2:=\R_+:=(0,\infty)$ (i.e. with the interface at $x=0$). Hence
	$$V(x,\omega) = V_\pm(\omega) \quad \text{for } \pm x>0,$$
	where $V_\pm(\omega):=V(\pm x>0, \omega)$ are independent of $x$. We also define the functions
	$$W(x, \omega):=-\omega V(x, \omega) \ \text{ and } \ W_\pm(\omega):=-\omega V_\pm(\omega).$$

	The spectrum of the problem with two layers was analysed in \cite{brown2022spectrum}  in detail for the second order formulation (the curl curl problem) with the condition $\nabla_k\cdot \left((1+\hat{\chi}^{(1)}(\cdot,\omega))u\right)=0$ included in the definition of the domain of $A$. As explained in Section \ref{S:intro}, this definition of $D(A)$ is not suitable for the nonlinear analysis. Our domain $D(A)$ excludes this condition and hence the range of $L_k$ is not divergence free. As a result, in the second order formulation the resolvent problem cannot be reduced to a scalar equation since derivatives of $L^2$ functions would appear, see the proof of Proposition 3.2 in \cite{brown2022spectrum}. Hence, there is no substantial benefit in using the second order formulation and we stay within the first order formulation. For this we need to redo some of the calculations in \cite{brown2022spectrum} with the definition of $D(A)$ as in \eqref{domain2}.

	For the resolvent set we have
	\begin{prop}\label{T:resolv-1D}
		Let $k\in \R$. Then
		$$\rho(L_k)\supset \Omega \setminus ( M^{(k)}_+\cup M^{(k)}_- \cup N^{(k)}\cup \Omega_0),$$
		where
		$$M^{(k)}_\pm:= \{ \omega \in \Omega\setminus\Omega_0: W_\pm (\omega)\in [k^2,\infty)\},$$
		and
		\beq
		N^{(k)}:= ~\{ \omega \in  \Omega \setminus \Omega_0: W_+(\omega),W_-(\omega)\notin [k^2,\infty) \text{ and } \eqref{E:ev.cond-1D} \text{ holds}  \}. \label{E:Nk}
		\eeq
		\beq\label{E:ev.cond-1D}
		\sqrt{k^2-W_+(\omega)} V_-(\omega)+ \sqrt{k^2-W_-(\omega)} V_+(\omega)=0.
		\eeq
	\end{prop}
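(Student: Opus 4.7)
The plan is to construct the inverse of $L_k(\omega)$ explicitly by reducing the first-order $3\times 3$ system to a single scalar second-order ODE for $u_3$ on each half line and gluing solutions via the transmission conditions at $x=0$.

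Fix $\omega\in\Omega\setminus(M^{(k)}_+\cup M^{(k)}_-\cup N^{(k)}\cup\Omega_0)$ and $f\in L^2(\R,\C^3)$. Since $\omega\notin\Omega_0$, $V_\pm(\omega)\ne 0$, so the first two components of $L_k(\omega)u=f$ yield algebraically
\[
u_1 = \frac{k u_3 + i f_1}{V_\pm}, \qquad u_2 = \frac{i(\partial_x u_3 + f_2)}{V_\pm} \quad \text{on } \R_\pm.
\]
Substituting into the third component gives the distributional scalar ODE
\[
-\partial_x^2 u_3 + \kappa_\pm^2\, u_3 \;=\; \partial_x f_2 + i V_\pm f_3 - i k f_1 \quad \text{on } \R_\pm, \qquad \kappa_\pm^2:=k^2-W_\pm(\omega).
\]

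Because $\omega\notin M^{(k)}_\pm$, $\kappa_\pm^2\notin(-\infty,0]$, so the branch with $\Real(\kappa_\pm)>0$ is well defined and the $L^2$-admissible homogeneous solutions are $e^{-\kappa_+ x}$ on $\R_+$ and $e^{\kappa_- x}$ on $\R_-$. I would construct a particular solution $u_3^{p,\pm}\in H^1(\R_\pm)$ by convolving the right-hand side with the full-line Green's function $G_\pm(x,y)=(2\kappa_\pm)^{-1}e^{-\kappa_\pm|x-y|}$ of $-\partial_x^2+\kappa_\pm^2$ applied to the data extended by zero, the $\partial_x f_2$ contribution being absorbed via an integration by parts so that $f\mapsto u_3^{p,\pm}$ is bounded from $L^2(\R,\C^3)$ into $H^1(\R_\pm)$. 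Writing $u_3 = c_\pm e^{\mp\kappa_\pm x} + u_3^{p,\pm}$ reduces the problem to finding $(c_+,c_-)\in\C^2$.

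The transmission conditions $\llbracket u_3\rrbracket=\llbracket u_2\rrbracket=0$ at $x=0$ give a $2\times 2$ linear system for $(c_+,c_-)$ whose determinant, a direct computation shows, equals up to a nonzero factor the expression $\sqrt{k^2-W_+}\,V_-+\sqrt{k^2-W_-}\,V_+$; this is nonzero precisely because $\omega\notin N^{(k)}$, so $(c_+,c_-)$ is uniquely determined with $|c_\pm|\lesssim\|f\|$. The explicit formulas for $u_1,u_2,u_3$ then place $u$ in $D(A)$ with $\|u\|_A\lesssim\|f\|$, so that $L_k(\omega)$ is bijective with bounded inverse. Injectivity is the same computation with $f=0$: a nontrivial kernel element would force the above determinant to vanish, contradicting $\omega\notin N^{(k)}$.

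The main technical obstacle is the low regularity of $f_2\in L^2(\R)$: $\partial_x f_2$ is only distributional and the naive transmission condition on $u_2$ involves a pointwise value $f_2(0^\pm)$ that need not exist. I would resolve this by observing that the second line $-\partial_x u_3 - iV_\pm u_2 = f_2$ forces $\partial_x u_3 + f_2 = -iV_\pm u_2\in H^1(\R_\pm)$, so the combined trace $(\partial_x u_3 + f_2)(0^\pm)$ is well defined and $\llbracket u_2\rrbracket=0$ can be formulated directly as $V_+^{-1}(\partial_x u_3 + f_2)(0^+) = V_-^{-1}(\partial_x u_3 + f_2)(0^-)$; alternatively, one first carries out the explicit construction for $f\in C_c^\infty(\R,\C^3)$ and extends by density using the uniform bound $\|u\|_A\lesssim\|f\|$.
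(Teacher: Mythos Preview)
Your proposal is correct but takes a genuinely different reduction from the paper. The paper eliminates only $u_1$ algebraically and works with the first-order $2\times 2$ system for $(u_2,u_3)$,
\[
u_3'+\ri V_\pm u_2=-r_2,\qquad u_2'+\ri\frac{W_\pm-k^2}{V_\pm}u_3=r_3-\frac{k}{V_\pm}r_1,
\]
whose right-hand side is plainly in $L^2$; an explicit fundamental matrix and variation of parameters give particular solutions, and the two interface conditions determine the free constants via a single linear equation whose coefficient is $\mu_-V_++\mu_+V_-$. Your route instead eliminates $u_2$ as well, arriving at the scalar second-order equation for $u_3$ with the $H^{-1}$ source $\partial_x f_2$; this is precisely the reduction the paper remarks it avoids (see the discussion just before the proposition, where the authors note that without the divergence condition in $D(A)$ the scalar reduction forces derivatives of $L^2$ functions). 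You handle this obstacle correctly---integrating by parts against the Green's function and observing that the combination $\partial_x u_3+f_2$ lies in $H^1(\R_\pm)$ so that the $u_2$-transmission condition makes sense---and your $2\times 2$ determinant indeed collapses to the same dispersion quantity. The paper's approach buys a cleaner analysis with no distributional right-hand side and no trace subtleties; yours buys familiar scalar Green's-function machinery at the cost of the extra bookkeeping you flag in your final paragraph.
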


	\begin{proof}
		Let $r \in L^2(\R, \C^3)$ and $\omega \in  \Omega \setminus ( M^{(k)}_+\cup M^{(k)}_- \cup N^{(k)}\cup \Omega_0)$. We need to show the existence of a unique $u\in D(A)$ such that
		\beq\label{Neq_22}
		\left. \begin{array}{rcl}
			\ri ku_3 - \ri  V_\pm(\omega)u_1&=& r_1 \\
			- u_3 ' -\ri  V_\pm(\omega)u_2  &=& r_2 \\
			u_2 '  -\ri k u_1 - \ri \omega  u_3     &=& r_3
		\end{array} \right\} {\rm ~on~} \R_\pm,
		\eeq
		where $V(\omega):=V(x,\omega)$ and such that $\|u\|_{L^2(\R)^3}\leq c \|r\|_{L^2(\R)^3}$ with $c$ independent of $r$. The first equation in \eqref{Neq_22} implies
		\begin{equation}\label{u_1}u_1=\frac{\ri}{ V_\pm(\omega)}\left(r_1-\ri k u_3\right) \  {\rm ~on~} \R_\pm.\end{equation}
		Plugging \eqref{u_1} into the third equation of \eqref{Neq_22}, one obtains
		\beq\label{u23}
		\left. \begin{array}{rcl}
			u_3 ' +\ri  V_\pm(\omega)u_2  &=& -r_2 \\
			u_2 '  + \ri  \frac{W_\pm(\omega)-k^2}{ V_\pm(\omega)} u_3     &=& r_3-\frac{k}{ V_\pm(\omega)}r_1
		\end{array} \right\} {\rm ~on~} \R_\pm.
		\eeq
		The homogeneous version of system \eqref{u23} (with the solution vector being $(u_2,u_3)^T$) has the fundamental solution matrix
		\beq\label{Fund_Matrix}
		Y_\pm(x):=\begin{pmatrix}
			\mu_\pm e^{\mu_\pm x} \ & \mu_\pm e^{-\mu_\pm x}\\
			-\ri V_\pm(\omega)e^{\mu_\pm x} \ & \ri V_\pm(\omega)e^{-\mu_\pm x}
		\end{pmatrix},
		\eeq
		where $\mu_\pm :=\sqrt{k^2-W_\pm(\omega)}$. Analogously to  \cite{brown2022spectrum} (see Lemma 3.5), we have the following particular solutions in $L^2(\R_\pm,\C^2)$
		\begin{align*}
			\tilde{u}_p^+(x)&:=\frac{1}{2}Y_+(x)\begin{pmatrix}
				\int_{x}^{\infty}\rho_+^{(1)}(s)e^{-\mu_+s}\dd s\\
				\int_{0}^{x}\rho_+^{(2)}(s)e^{\mu_+s}\dd s
			\end{pmatrix}\\
			&= \frac{1}{2}\begin{pmatrix}
				e^{\mu_+x}\mu_+\int_{x}^{\infty}\rho_+^{(1)}(s)e^{-\mu_+s}\dd s+e^{-\mu_+x}\mu_+\int_{0}^{x}\rho_+^{(2)}(s)e^{\mu_+s}\dd s\\
				e^{\mu_+x}(-\ri V_+(\omega))	\int_{x}^{\infty}\rho_+^{(1)}(s)e^{-\mu_+s}\dd s+e^{-\mu_+x}\ri V_+(\omega)\int_{0}^{x}\rho_+^{(2)}(s)e^{\mu_+s}\dd s
			\end{pmatrix}, \quad x\in \R_+,
		\end{align*}
		\begin{align*}
			\tilde{u}_p^-(x)&:=\frac{1}{2}Y_-(x)\begin{pmatrix}
				\int_{x}^{0}\rho_-^{(1)}(s)e^{-\mu_-s}\dd s\\
				\int_{-\infty}^{x}\rho_-^{(2)}(s)e^{\mu_-s}\dd s
			\end{pmatrix}\\
			&= \frac{1}{2}\begin{pmatrix}
				e^{\mu_-x}\mu_-\int_{x}^{0}\rho_-^{(1)}(s)e^{-\mu_-s}\dd s+e^{-\mu_-x}\mu_-\int_{-\infty}^{x}\rho_-^{(2)}(s)e^{\mu_-s}\dd s\\
				e^{\mu_-x}(-\ri V_-(\omega))	\int_{x}^{0}\rho_-^{(1)}(s)e^{-\mu_-s}\dd s+e^{-\mu_-x}\ri V_-(\omega)\int_{-\infty}^{x}\rho_-^{(2)}(s)e^{\mu_-s}\dd s
			\end{pmatrix}, \quad x\in \R_-,
		\end{align*}
		where
		\beq\label{E:rhos}
		\rho_\pm^{(1)}(x):=\ri\frac{r_2(x)}{V_\pm(\omega)}+\frac{kr_1(x)}{V_\pm(\omega)\mu_\pm}-\frac{r_3(x)}{\mu_\pm},\quad \rho_\pm^{(2)}(x):=\ri\frac{r_2(x)}{V_\pm(\omega)}-\frac{kr_1(x)}{V_\pm(\omega)\mu_\pm}+\frac{r_3(x)}{\mu_\pm}.
		\eeq
		Because of $\Real(\mu_\pm)>0$ (recall that  $\omega \notin M_\pm^{(k)}$) it holds that $\tilde{u}_p^+\in L^2(\R_+, \C^2)$,  $\tilde{u}_p^-\in L^2(\R_-, \C^2)$.

		The general solution $\tilde{u}=(u_2, u_3)\in L^2(\R,\C^2)$ of \eqref{u23} is given by
		\beq\label{general_solu}
		\tilde{u}(x)=
		\begin{cases}
			C_-e^{\mu_- x}\begin{pmatrix}
				\mu_- \\ -\ri V_-(\omega)
			\end{pmatrix} + \tilde{u}_p^-(x), & x<0, \\
			C_+e^{-\mu_+ x}\begin{pmatrix}
				\mu_+ \\ \ri V_+(\omega)
			\end{pmatrix} + \tilde{u}_p^+(x), & x>0
		\end{cases}
		\eeq
		with $C_\pm \in \C$ arbitrary. For $\omega \notin \Omega_0$ we have $V_\pm(\omega)\neq 0$ and \eqref{u_1} yields also $u_1\in L^2(\R).$

		For the interface condition $\llbracket u_2\rrbracket=0$ (at $x=0$) first note that
		\beq\label{IFC_u_2}\displaystyle\lim_{x\to 0^+}u_2(x)=C_+\mu_++\frac{\mu_+}{2}\int_{0}^{\infty}\rho_+^{(1)}(s)e^{-\mu_+s}\dd s,\quad \lim_{x\to 0^-}u_2(x)=	C_-\mu_-+\frac{\mu_-}{2}\int_{-\infty}^{0}\rho_-^{(2)}(s)e^{\mu_-s}\dd s.
		\eeq
		Hence $\llbracket u_2\rrbracket=0$ if and only if
		\begin{equation}\label{C_plus}
			C_+=\frac{\mu_-}{\mu_+}C_-+\frac{\mu_-}{2\mu_+}\int_{-\infty}^{0}\rho_-^{(2)}(s)e^{\mu_-s}\dd s-\frac{1}{2}\int_{0}^{\infty}\rho_+^{(1)}(s)e^{-\mu_+s}\dd s.
		\end{equation}

		For $\llbracket u_3\rrbracket=0$ we note that
		\beq\label{IFC_u_3}\begin{split}
			\displaystyle\lim_{x\to 0^+}u_3(x)&=C_+\ri V_+(\omega)-\frac{\ri V_+(\omega)}{2}\int_{0}^{\infty}\rho_+^{(1)}(s)e^{-\mu_+s}\dd s,\\
			\lim_{x\to 0^-}u_3(x)&=	-C_-\ri V_-(\omega)+\frac{\ri V_-(\omega)}{2}\int_{-\infty}^{0}\rho_-^{(2)}(s)e^{\mu_-s}\dd s.
		\end{split}
		\eeq
		Using \eqref{C_plus}, we get
		\beq\label{dis_relation_0}
		\left(\mu_-V_+(\omega)+\mu_+V_-(\omega)\right)C_-=\frac{1}{2}\left(\mu_+V_-(\omega)-\mu_-V_+(\omega)\right)\int_{-\infty}^{0}\rho_-^{(2)}(s)e^{\mu_-s}\dd s+\mu_+V_+(\omega)\int_{0}^{\infty}\rho_+^{(1)}(s)e^{-\mu_+s}\dd s. \eeq
		Hence, a unique $C_-$ exists for any $r\in L^2(\R,\C^3)$ if and only if
		\[\mu_-V_+(\omega)+\mu_+V_-(\omega)\neq 0,\]
		i.e. $\omega\notin N^{(k)}$.

		To prove $\|u\|_{L^2(\R,\C^3)}\leq c \|r\|_{L^2(\R,\C^3)}$ with $c$ independent of $r$, note that  Lemma 3.5 of \cite{brown2022spectrum} provides an estimate for all the integral terms in  $\tilde{u}_p^\pm$:
		\beq\label{E:est-rho-int}
		\begin{aligned}
			\left\|e^{\mu_+ \cdot }\int_{\cdot}^{\infty}\rho_+^{(1)}(s)e^{-\mu_+s}\dd s\right\|_{L^2(\R_+)}&\leq \frac{c}{\Real(\mu_+)}\left\|\rho_+^{(1)}\right\|_{L^2(\R_+)}, \\
			\left\|e^{-\mu_+ \cdot }\int_{0}^{\cdot}\rho_+^{(2)}(s)e^{\mu_+s}\dd s\right\|_{L^2(\R_+)}&\leq \frac{c}{\Real(\mu_+)}\left\|\rho_+^{(2)}\right\|_{L^2(\R_+)}, \\
			\left\|e^{-\mu_- \cdot }\int_{-\infty}^{\cdot}\rho_-^{(2)}(s)e^{\mu_-s}\dd s\right\|_{L^2(\R_-)}&\leq \frac{c}{\Real(\mu_-)}\left\|\rho_-^{(2)}\right\|_{L^2(\R_-)}, \\
			\left\|e^{\mu_- \cdot }\int_{\cdot}^{0}\rho_-^{(1)}(s)e^{-\mu_-s}\dd s\right\|_{L^2(\R_-)}&\leq \frac{c}{\Real(\mu_-)}\left\|\rho_-^{(1)}\right\|_{L^2(\R_-)}.
		\end{aligned}
		\eeq
		For $\omega \in \Omega \setminus  ( M^{(k)}_+\cup M^{(k)}_- \cup \Omega_0)$ we clearly have $$\|\rho_\pm^{(1)}\|_{L^2(\R_\pm)}\leq c \|r\|_{L^2(\R_\pm,\C^3)}\  \text{ and } \ \|\rho_\pm^{(2)}\|_{L^2(\R_\pm)}\leq c \|r\|_{L^2(\R_\pm,\C^3)}.$$
		In summary, we get $\|\tilde{u}\|_{L^2(\R,\C^2)}\leq c \|r\|_{L^2(\R,\C^3)}$. For $u_1$ we get from \eqref{u_1} the estimate \[\|u_1\|_{L^2(\R)}\leq c \left(\|r_1\|_{L^2(\R)}+\|u_3\|_{L^2(\R)}\right).\] Altogether we conclude $\|u\|_{L^2(\R,\C^3)}\leq c \|r\|_{L^2(\R,\C^3)}$.

		The only missing property for $u\in D(A)$ is $\tilde{u}\in H^1(\R_\pm,\C^2)$. This follows, however, from the estimates \eqref{E:est-rho-int} because we have, for example,
		$$
		u_2'(x)=\mu_-^2\left(C_-e^{\mu_-x} + \frac{1}{2}\left(e^{\mu_-x}\int_{x}^{0}\rho_-^{(1)}(s)e^{-\mu_-s}\dd s-e^{-\mu_-x}\int_{-\infty}^{x}\rho_-^{(2)}(s)e^{\mu_-s}\dd s\right)\right) +\frac{\mu_-}{2}( \rho_-^{(2)}(x) -\rho_-^{(1)}(x))
		$$
		for $x\in \R_-$.

		The proof of Proposition \ref{T:resolv-1D} is complete.
	\end{proof}
	\begin{rem}
		Note that Proposition \ref{T:resolv-1D} holds also with $\Omega_0$ replaced by the possibly smaller
		set $\tilde{\Omega}_0:=\{\omega\in \Omega: V_j(\omega)= 0 \text{  for some } j \in \{1,\dots,m\}\}$. In other words, $0\in \rho(L_k)$ if $0\in \Omega$, $V_j(0)\neq 0 \ \forall j$, and $k\neq 0$. The inequality $V_j(0)\neq 0$ is possible if $\hat{\chi}^{(1)}$ has a pole at $\omega=0$. If $k=0$, then $M_\pm^{(k)}=\{\omega \in \Omega \setminus \tilde{\Omega}_0: -\omega V_\pm(\omega)\in [0,\infty)\}$ and hence $0\in M_+^{(k)}\cup M_-^{(k)}$ as long as $V_+$ or $V_-$ has no pole at $0$.
	\end{rem}
	\begin{rem}
		Equation \eqref{E:ev.cond-1D} is equivalent to the dispersion relation (2.14) in \cite{maier2007plasmonics}.
	\end{rem}

	The next result determines the set of eigenvalues of $L_k$ away from the set $\Omega_0$ for any $k\in \R$, and shows their simplicity.
	\begin{prop}\label{T:pt-spec-1D}
		Let $k \in \R$. Then
		\beq\label{E:pt-sp1Dknz-omnz}
		\begin{aligned}
			\sigma_p(L_k)\setminus \Omega_0 &= N^{(k)}.
		\end{aligned}
		\eeq
		All eigenvalues in $\sigma_p(L_k)\setminus \Omega_0$ are geometrically simple. An eigenvalue $\omega \in \sigma_p(L_k)\setminus \Omega_0$ is algebraically simple if and only if
		\beq\label{E:simple-cond-2-lay}
		(2k^2-W_+(\omega))(2k^2-W_-(\omega))-2k^4-(k^2-W_+(\omega))V_-(\omega)^2\neq 0.
		\eeq
	\end{prop}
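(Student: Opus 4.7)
My plan is to revisit the explicit resolvent reduction already used in the proof of Proposition~\ref{T:resolv-1D} and extract from it the three conclusions in turn, once with right-hand side $r=0$ (for the kernel description) and once with $r=\varphi_0$ (for the generalized kernel).

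For the identification $\sigma_p(L_k)\setminus \Omega_0 = N^{(k)}$, I would set $r=0$ in \eqref{Neq_22}. The first equation forces $u_1=ku_3/V_\pm(\omega)$ on $\R_\pm$, and $(u_2,u_3)$ must lie in the range of $Y_\pm$ from \eqref{Fund_Matrix}. Membership in $L^2(\R_\pm,\C^2)$ requires $\Real(\mu_\pm)>0$, ruling out $\omega\in M^{(k)}_+\cup M^{(k)}_-$, and forces us to keep only the decaying exponential on each side, leaving two free constants $C_\pm\in\C$. The interface conditions \eqref{IFCs}, via the limit formulas \eqref{IFC_u_2} and \eqref{IFC_u_3} with vanishing integrals, reduce to a $2\times 2$ homogeneous linear system in $(C_-,C_+)$ whose determinant is proportional to $\mu_+V_-(\omega)+\mu_-V_+(\omega)$. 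A nontrivial $(C_-,C_+)$ exists iff this quantity vanishes, i.e.\ iff $\omega\in N^{(k)}$. Geometric simplicity is then immediate: when $\omega\in N^{(k)}$ the two rows of this matrix are nonzero (since $\omega\notin \Omega_0\cup M^{(k)}_\pm$ makes $V_\pm,\mu_\pm$ nonzero), so the matrix has rank one and a one-dimensional kernel, while $u_1$ is determined by $u_3$. Hence the eigenspace is one-dimensional.

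For algebraic simplicity I would exploit geometric simplicity to reduce the question to solvability of $L_k(\omega)u=\varphi_0$ with $\varphi_0$ a fixed nonzero eigenfunction. I would plug $r=\varphi_0$ into the resolvent representation: at an eigenvalue the coefficient of $C_-$ in \eqref{dis_relation_0} vanishes, so the inhomogeneous problem is solvable iff the right-hand side of \eqref{dis_relation_0} vanishes as well. Writing $\varphi_0$ explicitly as $C_\pm(\mu_\pm,\pm\ri V_\pm(\omega))^\trans e^{\mp\mu_\pm x}$ on $\R_\pm$ with $C_\pm$ constrained by $\mu_+V_-=-\mu_-V_+$ (and $\varphi_{0,1}=k\varphi_{0,3}/V_\pm$), the auxiliary functions $\rho_-^{(2)}$ and $\rho_+^{(1)}$ from \eqref{E:rhos} become constant multiples of pure exponentials with prefactors proportional to $(2k^2-W_\pm-V_\pm^2)$. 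The integrals in \eqref{dis_relation_0} then evaluate in closed form, each producing a factor $\tfrac{1}{2\mu_\pm^2}$, and after substituting back the solvability condition collapses to
\begin{equation*}
V_+(\omega)\mu_+^2\bigl(2k^2-W_-(\omega)-V_-(\omega)^2\bigr)-V_-(\omega)\mu_-^2\bigl(2k^2-W_+(\omega)-V_+(\omega)^2\bigr)=0.
\end{equation*}

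The main obstacle is the final algebraic reduction to the asymmetric form \eqref{E:simple-cond-2-lay}. For this I would use $\mu_\pm^2=k^2-W_\pm$ together with the squared dispersion identity $(k^2-W_+)V_-^2=(k^2-W_-)V_+^2$, which combined with $W_\pm=-\omega V_\pm$ implies $\mu_+\mu_-=\pm k^2$. After expanding the above and repeatedly substituting these identities, the displayed expression factors as a nonzero multiple (namely $(\mu_+^2-\mu_-^2)/\omega$, nonzero because $\omega\neq 0$ and because $\mu_+=\mu_-$ at an eigenvalue would force $V_+=-V_-$ and then $W_+=W_-$, contradicting $\omega,V_\pm\neq 0$) of $(2k^2-W_+)(2k^2-W_-)-2k^4-(k^2-W_+)V_-^2$. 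The non-vanishing of the solvability obstruction is therefore equivalent to \eqref{E:simple-cond-2-lay}, which is the algebraic simplicity criterion claimed.
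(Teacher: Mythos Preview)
Your proposal is correct and follows essentially the same route as the paper: set $r=0$ in the resolvent construction to read off the eigenfunction and the dispersion relation \eqref{E:ev.cond-1D}, infer geometric simplicity from the rank-one interface system, then set $r=\varphi_0$ and use that the left-hand side of \eqref{dis_relation_0} vanishes at an eigenvalue so that solvability of the generalized eigenvector equation is equivalent to vanishing of the right-hand side. Your intermediate packaging of the obstruction as $V_+\mu_+^2(2k^2-W_--V_-^2)-V_-\mu_-^2(2k^2-W_+-V_+^2)$ and the explicit naming of the identities $(k^2-W_+)V_-^2=(k^2-W_-)V_+^2$ and $k^2(V_++V_-)=-\omega V_+V_-$ are slightly cleaner than the paper's step-by-step manipulation, but the algebra is the same and yields the same factor $V_+-V_-=(\mu_+^2-\mu_-^2)/\omega$ in front of \eqref{E:simple-cond-2-lay}.
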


	\begin{proof}
		From the proof of Prop. \ref{T:resolv-1D} we get that $L^2(\R)$-solutions of $L_k(\omega)\psi=0$, i.e. of the  homogenous version of \eqref{Neq_22}, have the form
		\beq\label{E:psi}
		\psi(x)=\begin{cases}
			c_-e^{\mu_-x}\bspm-\ri k\\ \mu_- \\ -\ri V_-(\omega)\espm  &\quad \text{for}\ x<0, \\
			c_+e^{-\mu_+x}\bspm\ri k\\ \mu_+\\ \ri V_+(\omega) \espm  &\quad \text{for}\ x>0
		\end{cases}
		\eeq
		with $c_\pm \in \C$. For any $c_\pm \in \C$ we clearly have $\psi|_{\R_\pm}\in H^1(\R_\pm,\C^3)$. The interface conditions \eqref{IFCs} hold if and only if
		$$c_-\mu_-=c_+\mu_+ \ \text{ and } \ - c_-V_-(\omega)= c_+V_+(\omega),$$
		which is equivalent to $c_+=-\frac{V_-(\omega)}{V_+(\omega)}c_-$ and $V_+(\omega)\mu_-+V_-(\omega)\mu_+=0$, i.e. equation \eqref{E:ev.cond-1D}.

		The above unique form of the eigenfunction $\psi$ shows that each eigenvalue $\omega$ in $\sigma_p(\cL_{k})\setminus \Omega_0$ is geometrically simple in the sense that $\lambda=0$ is a geometrically simple eigenvalue of
		$$
		L_k(\omega)u = \lambda u
		$$

		Finally, we study the algebraic simpleness, i.e. the non-existence of a solution $u\in D(A)$ of \eqref{E:gen-evec-eq} with $v:=\psi$. That means, we consider
		$$L_k(\omega)u=\psi, \quad u\in D(A).$$
		Assuming for a contradiction that a solution $u$ exists, we first follow the lines of the proof of Proposition \ref{T:resolv-1D} with $r:=\psi\in L^2(\R,\C^3)$. Since
		\eqref{E:ev.cond-1D} holds, we get from \eqref{dis_relation_0}
		\begin{align}
			0&=\frac{1}{2}\left(\mu_+V_-(\omega)-\mu_-V_+(\omega)\right)\int_{-\infty}^{0}\rho_-^{(2)}(s)e^{\mu_-s}\dd s+\mu_+V_+(\omega)\int_{0}^{\infty}\rho_+^{(1)}(s)e^{-\mu_+s}\dd s, \notag\\
			&=\mu_+V_-(\omega)\int_{-\infty}^{0}\rho_-^{(2)}(s)e^{\mu_-s}\dd s+\mu_+V_+(\omega)\int_{0}^{\infty}\rho_+^{(1)}(s)e^{-\mu_+s}\dd s=:\alpha,\label{E:C2b}
		\end{align}
		with $\rho_+^{(1)}$ and $\rho_-^{(2)}$ given in \eqref{E:rhos}.

		Next, from \eqref{E:psi}, setting $c_-:=1$ (hence $c_+=-V_-(\omega)/V_+(\omega)$) and using \eqref{E:ev.cond-1D} gives
		$$\psi_1(x)=\begin{cases} 	-\ri k e^{\mu_- x},  & x<0,\\
			-\ri k \frac{V_-(\omega)}{V_+(\omega)} e^{-\mu_+ x},  & x>0,
		\end{cases} \quad
		\psi_2(x)=\begin{cases} 	\mu_- e^{\mu_- x},  & x<0,\\
			\mu_- e^{-\mu_+ x},  & x>0,
		\end{cases} \quad \psi_3(x)=\begin{cases}
			-\ri V_-(\omega) e^{\mu_- x},  & x<0,\\
			-\ri V_-(\omega) e^{-\mu_+ x},  & x>0
		\end{cases}$$
		and we obtain
		\[
		\begin{aligned}
			\int_{0}^{\infty}\rho_+^{(1)}(s)e^{-\mu_+s}\dd s=&\frac{\ri}{2\mu_+}\left(\frac{\mu_-}{V_+}-\frac{k^2V_-}{V_+^2\mu_+}+\frac{V_-}{\mu_+}\right),\\
			\int_{-\infty}^{0}\rho_-^{(2)}(s)e^{\mu_-s}\dd s=&\frac{\ri}{2\mu_-}\left(\frac{\mu_-}{V_-}+\frac{k^2}{V_-\mu_-}-\frac{V_-}{\mu_-}\right).
		\end{aligned}
		\]
		Using \eqref{E:ev.cond-1D}, i.e. $\mu_-V_++\mu_+V_-=0$, \eqref{E:C2b} implies
		$$
		\begin{aligned}
			-2\ri \frac{\alpha}{V_+}=&\frac{\mu_-}{V_+}-\frac{k^2V_-}{V_+^2\mu_+}+\frac{V_-}{\mu_+} -\frac{\mu_-}{V_-}-\frac{k^2}{V_-\mu_-}+\frac{V_-}{\mu_-}\\
			=&\mu_-\frac{V_--V_+}{V_+V_-}+k^2\frac{V_-\mu_-^2-V_+\mu_+^2}{V_+\mu_+^2V_-\mu_-}+V_-\frac{\mu_++\mu_-}{\mu_+\mu_-}\\
			=&\frac{1}{\mu_+^2\mu_-V_+V_-}\left[(V_--V_+)(\mu_-^2\mu_+^2+k^2(\mu_+^2+\mu_-^2))+k^2(\mu_-^2V_+-\mu_+^2V_-)+\mu_+V_+V_-^2(\mu_++\mu_-)\right].
			\label{contradiction}
		\end{aligned}
		$$
		Because $\mu_-^2V_+-\mu_+^2V_-=k^2(V_+-V_-)$ and $\mu_+V_+V_-^2(\mu_++\mu_-)=\mu_+^2V_-^2(V_+-V_-)$, we obtain
		$$
		\begin{aligned}
			-2\ri \frac{\alpha}{V_+} =& \frac{V_--V_+}{\mu_+^2\mu_-V_+V_-}\left[(\mu_+^2+k^2)(\mu_-^2+k^2)-2k^4-(k^2-W_+)V_-^2\right]\\
			=&  \frac{V_--V_+}{\mu_+^2\mu_-V_+V_-}\left[(2k^2-W_+)(2k^2-W_-)-2k^4-(k^2-W_+)V_-^2\right].
		\end{aligned}
		$$
		Since $V_+(\omega)\neq V_-(\omega)$ (due to \eqref{E:ev.cond-1D}), the statement follows.
	\end{proof}

	As we show next, outside the set $\Omega_0$, the eigenvalues of $L_{k}$ are isolated, i.e. the eigenvalue $\lambda=0$ of $L_k(\omega)u = \lambda u$  is isolated from the rest of the spectrum of this eigenvalue problem.

	\begin{prop}\label{T:disc_sp_1D}
		Let $k\in\R$. Every eigenvalue in $\sigma_p(L_k)\setminus \Omega_0$ is isolated.
	\end{prop}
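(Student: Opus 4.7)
My plan is to mimic the resolvent construction of Proposition~\ref{T:resolv-1D} for the shifted operator $L_k(\omega_0)-\lambda I$ and to show that the resulting ``dispersion coefficient'', viewed as a function of $\lambda$, is holomorphic near $\lambda=0$ and not identically zero. Fix $\omega_0\in\sigma_p(L_k)\setminus\Omega_0$; by Proposition~\ref{T:pt-spec-1D} we then have $\omega_0\in N^{(k)}$, so $\omega_0\notin M_\pm^{(k)}$ and $V_\pm(\omega_0)\neq 0$. The aim is to produce $\delta>0$ such that $L_k(\omega_0)-\lambda I$ is boundedly invertible on $L^2(\R,\C^3)$ for every $\lambda\in B_\delta(0)\setminus\{0\}$, which is precisely the statement that $\lambda=0$ is isolated in $\sigma(L_k(\omega_0))$.

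For $r\in L^2(\R,\C^3)$ and $|\lambda|$ small, the equation $(L_k(\omega_0)-\lambda I)u=r$ takes the form \eqref{Neq_22} with $\ri V_\pm$ replaced by $a_\pm(\lambda):=\ri V_\pm(\omega_0)+\lambda$ and $\ri\omega_0$ replaced by $b(\lambda):=\ri\omega_0+\lambda$. Eliminating $u_1$ and reducing to a $2\times 2$ system on $(u_2,u_3)$ exactly as in the proof of Proposition~\ref{T:resolv-1D} introduces characteristic exponents $\tilde\mu_\pm(\lambda):=\sqrt{k^2-a_\pm(\lambda)b(\lambda)}$. Because $\omega_0\notin M_\pm^{(k)}$ keeps $k^2-W_\pm(\omega_0)\in\C\setminus(-\infty,0]$, the arguments of the square roots stay in this open set for all $\lambda$ in some disk $B_{\delta_0}(0)$, so with the principal branch both $\tilde\mu_\pm$ are holomorphic on $B_{\delta_0}(0)$ and still satisfy $\Real\tilde\mu_\pm>0$. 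The two interface conditions then yield, in complete analogy to \eqref{dis_relation_0}, a linear equation for the free constant $C_-$ whose coefficient is
\beqq
F(\lambda):=\tilde\mu_-(\lambda)\,a_+(\lambda)+\tilde\mu_+(\lambda)\,a_-(\lambda),
\eeqq
holomorphic on $B_{\delta_0}(0)$ with $F(0)=\ri(\mu_-V_+(\omega_0)+\mu_+V_-(\omega_0))=0$ by \eqref{E:ev.cond-1D}. All remaining ingredients of the proof of Proposition~\ref{T:resolv-1D} (construction of the particular solutions $\tilde u_p^\pm$, the $L^2$-estimates \eqref{E:est-rho-int}, and the formula \eqref{u_1} for $u_1$) carry over with constants depending continuously on $\lambda$, so whenever $F(\lambda)\neq 0$ one obtains a unique $u\in D(A)$ with $\|u\|_{L^2}\le c\|r\|_{L^2}$, i.e.\ $L_k(\omega_0)-\lambda I$ is boundedly invertible.

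The only non-routine step is to verify $F\not\equiv 0$ on $B_{\delta_0}(0)$. If $F\equiv 0$, then squaring the identity $\tilde\mu_-a_+=-\tilde\mu_+a_-$ and using $\tilde\mu_\pm^2=k^2-a_\pm b$ gives, after rearranging,
\beqq
(a_+-a_-)\bigl(k^2(a_++a_-)-a_+a_-b\bigr)\equiv 0\qquad\text{on }B_{\delta_0}(0).
\eeqq
The first factor equals the nonzero constant $\ri(V_+(\omega_0)-V_-(\omega_0))$: if instead $V_+(\omega_0)=V_-(\omega_0)$, then \eqref{E:ev.cond-1D} together with $V_\pm(\omega_0)\neq 0$ would force $\mu_++\mu_-=0$, contradicting $\Real\mu_\pm>0$. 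A direct expansion of the second factor shows that it is a polynomial in $\lambda$ whose $\lambda^3$-coefficient equals $-1$, hence not identically zero; this contradiction rules out $F\equiv 0$. By the identity theorem, $F$ has an isolated zero at $\lambda=0$, so there exists $\delta\in(0,\delta_0]$ with $F(\lambda)\neq 0$ for $0<|\lambda|<\delta$, and the bounded invertibility above yields the proposition. The essential difficulty, and the only place where a genuinely new argument is needed beyond Proposition~\ref{T:resolv-1D}, is the algebraic verification that the shifted dispersion coefficient $F$ does not vanish identically near $\lambda=0$.
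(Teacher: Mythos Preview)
Your proof is correct and follows essentially the same route as the paper: both shift the resolvent construction of Proposition~\ref{T:resolv-1D} by $\lambda$, identify the ``dispersion coefficient'' $F(\lambda)$ (equivalently $\mu_-^{(\lambda)}(V_+-\ri\lambda)+\mu_+^{(\lambda)}(V_--\ri\lambda)$), and rule out $F\equiv 0$ by squaring to obtain the same cubic polynomial in $\lambda$. The only cosmetic difference is that the paper phrases the conclusion as ``the cubic has at most three roots, one of which is $\lambda=0$,'' whereas you invoke holomorphy of $F$ and the identity theorem; these are equivalent arguments.
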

	\begin{proof}
		Let $\omega \in \sigma_p(L_k)\setminus \Omega_0$. It is to show that $\lambda \in \rho(A-B(\omega))$ for all $\lambda \in B_r(0)\setminus \{0\}$ with $r>0$ small enough.

		\indent Analogously to Proposition \ref{T:resolv-1D} we have (replacing $B(\omega)$ by $B(\omega)+\lambda I$)
		$$\rho(L_k)\supset ~\{ \omega \in  \Omega \setminus \Omega_0: (\omega-\ri \lambda)(V_\pm(\omega)-\ri\lambda)\notin [k^2,\infty) \text{ and } \mu_-^{(\lambda)}(V_+(\omega)-\ri\lambda)+\mu_+^{(\lambda)}(V_-(\omega)-\ri\lambda)\neq 0\},$$
		where
		$$\mu_\pm^{(\lambda)}:=\sqrt{k^2+(\omega-\ri\lambda)(V_\pm(\omega)-\ri\lambda)}.$$
		Clearly, as $-\omega V_\pm(\omega)=W_\pm(\omega)\notin [k^2,\infty)$ (since $\omega \in N^{(k)}$), we get also $ -(\omega-\ri\lambda)(V_\pm(\omega)-\ri\lambda)\notin [k^2,\infty)$ for all $|\lambda|$ small enough. It remains to be shown that $\mu_-^{(\lambda)}(V_+(\omega)-\ri\lambda)\neq -\mu_+^{(\lambda)}(V_-(\omega)-\ri\lambda)$ for all $|\lambda|$ small enough.

		Assuming the equality, we get
		$$(k^2+(\omega-\ri\lambda)(V_--\ri\lambda))(V_+-\ri\lambda)^2=(k^2+(\omega-\ri\lambda)(V_+-\ri\lambda))(V_--\ri\lambda)^2,$$
		which can be simplified to
		\beq\label{E:cubic-lam}
		k^2((V_--\ri\lambda)+(V_+-\ri\lambda))=-(\omega-\ri\lambda)(V_+-\ri\lambda)(V_--\ri\lambda)
		\eeq
		after dividing by $V_+-V_-$, which is non-zero. Equation \eqref{E:cubic-lam} is a cubic equation for $\lambda \in \C$. One of the roots is $\lambda=0$ due to \eqref{E:ev.cond-1D}, which implies $k^2(V_++V_-)=-\omega V_+V_-$. We denote the other two roots by $\lambda_1,\lambda_2$. Choosing $r<\min \{|\lambda_1|,|\lambda_2|\}$, the equation does not hold for any $\lambda \in B_r(0)\setminus \{0\}$.
	\end{proof}
	\begin{rem}
	Together with Proposition \ref{T:pt-spec-1D} we conclude that
	\beq\label{E:disc-sp1Dknz-omnz}
	\sigma_d(L_k)\setminus \Omega_0=\{\omega \in N^{(k)}: \eqref{E:simple-cond-2-lay} \text{ holds}\}.
	\eeq
\end{rem}

	\subsubsection{Three homogeneous layers}\label{S:3layers}

	Next we consider three homogeneous material layers with interfaces at $x=0$ and $x=d>0$, i.e. a sandwich geometry with two unbounded layers:
	\begin{equation}\label{chi1_w_3layers_homog_const}
		V(x,\omega)=\begin{cases}
			V_-(\omega) & \text{for $x<0$,} \\
			V_*(\omega) & \text{for $x\in(0,d)$,} \\
			V_+(\omega) & \text{for $x>d$,}
		\end{cases}
	\end{equation}
	where $V_-,V_*,V_+: \Omega\subset \C\to\C$. In other words, we set $I_1:=(-\infty,0), I_2:=(0,d), I_3:=(d,\infty),$ $V_1:=V_-, V_2:=V_*,$ and $V_3:=V_+$. We also define
	$W(x, \omega):=-\omega V(x, \omega)$, $W_\pm(\omega):=-\omega V_\pm(\omega)$, and $W_*(\omega):=-\omega V_{*}(\omega)$.
	The following conditions (an effective dispersion relation) play an important role in the analysis of Section \ref{S:3layers},
	\begin{equation}\label{cond_d_cD|M|cD}
		\exists m\in \Z: \ d_m:=\frac1{2\mu_*}\left[\log\left(\frac{\left(\mu_*V_+(\omega)-\mu_+V_*(\omega)\right)\left(\mu_*V_-(\omega)-\mu_-V_*(\omega)\right)}{\left(\mu_*V_+(\omega)+\mu_+V_*(\omega)\right)\left(\mu_-V_*(\omega)+\mu_*V_-(\omega)\right)}\right)+2\pi \ri m\right]\in (0,\infty)
	\end{equation}
	and
	\beq\label{cond_d_cD|M|cD-2}
	\begin{cases}
		&\mu_*V_+(\omega)+\mu_+V_*(\omega)=0,\   \mu_*V_-(\omega)-\mu_-V_*(\omega)=0\\
		 &\text{or} \\
		 & \mu_*V_+(\omega)-\mu_+V_*(\omega)=0,\  \mu_*V_-(\omega)+\mu_-V_*(\omega)=0,
	\end{cases}
	\eeq
	where
	$$\mu_\pm:=\sqrt{k^2-W_\pm(\omega)}, \quad \text{and} \quad \mu_*:=\sqrt{k^2-W_*(\omega)}.$$

	As explained below, if $d_m\in (0,\infty)$ for some $m\in \Z$ and provided
	$$k^2-W_+(\omega) \notin (-\infty,0], k^2-W_-(\omega) \notin (-\infty,0],\ \text{and}\ k^2-W_*(\omega) \neq 0,$$
	and if \eqref{cond_d_cD|M|cD-2} does not hold, then with
	$$d:=d_m$$
	an eigenvalue of the Maxwell operator $L_k$ exists.  The equation in \eqref{cond_d_cD|M|cD} can then be reformulated as
	\beq\label{E:cond-3-layers-B}
	e^{2\mu_*d}=\frac{\left(\mu_*V_+(\omega)-\mu_+V_*(\omega)\right)\left(\mu_*V_-(\omega)-\mu_-V_*(\omega)\right)}{\left(\mu_*V_+(\omega)+\mu_+V_*(\omega)\right)\left(\mu_-V_*(\omega)+\mu_*V_-(\omega)\right)}.
	\eeq
	The term $2\pi \ri m$ in \eqref{cond_d_cD|M|cD} appears due the fact that $z=\log(b)+2\pi \ri m$ solves $e^z=b$ for any $m\in \Z$.
		\begin{rem}
	Equation \eqref{E:cond-3-layers-B} is equivalent to the dispersion relation (2.28) in \cite{maier2007plasmonics}.
\end{rem}

	Note that if all the three layers are conservative materials, i.e. $V_\pm,V_*:\Omega\to \R_+$, then \eqref{cond_d_cD|M|cD} cannot be satisfied for any real $\omega$ because $\mu_\pm, \mu_*>0$ and the absolute value of the right hand side in \eqref{E:cond-3-layers-B} is smaller than one while the left hand side is larger than one for $\mu_*,d>0$.

	For a $\PT$-symmetric example setting (metal with gain - dielectric - metal with loss) we compute $d_m$ numerically in Example \ref{Ex:3layers-eigs} showing that this setting apparently leads to the existence of linear eigenvalues $\omega_0\in \R$.

	Next, we introduce important sets for the description of the spectrum for any $k\in \R$:
	\beq\label{E:Mkpm_3layers}
	\begin{split}
		M^{(k)}_{-}:= &~\{ \omega \in  \Omega \setminus \Omega_0: W_{-}(\omega)\in [k^2,\infty)\}, \\
		M^{(k)}_{+}:= &~\{ \omega \in  \Omega \setminus \Omega_0:  W_+(\omega)\in [k^2,\infty)\},
	\end{split} \eeq
	and
	\beq
	N^{(k)}:= ~\{ \omega \in  \Omega \setminus \Omega_0: W_+(\omega), W_-(\omega)\notin [k^2,\infty), W_*(\omega)\neq k^2, \eqref{cond_d_cD|M|cD} \text{ holds and } \eqref{cond_d_cD|M|cD-2} \text{ does not hold}  \}. \label{E:Nk_3layers}
	\eeq
	\beq
	O^{(k)}:= ~\{ \omega \in  \Omega \setminus \Omega_0: W_+(\omega), W_-(\omega)\notin [k^2,\infty), W_*(\omega)\neq k^2, \text{ and } \eqref{cond_d_cD|M|cD-2} \text{ holds}  \}. \label{E:Ok_3layers}
	\eeq

	For the resolvent set we have
	\begin{prop}\label{T:resolv-1D_3layers}
		Let $k\in\R$.
		$$\rho(L_k)\supset \Omega\setminus (M^{(k)}_+\cup M^{(k)}_-\cup N^{(k)} \cup O^{(k)}\cup \Omega_0)=:\cM^{(k)}.$$
	\end{prop}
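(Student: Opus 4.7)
The plan is to mirror the approach of Proposition \ref{T:resolv-1D} but now with three layers, tracking the extra pair of constants coming from the bounded middle layer $I_2=(0,d)$. Given $\omega\in\cM^{(k)}$ and $r\in L^2(\R,\C^3)$, I want to construct a unique $u\in D(A)$ with $L_k(\omega)u=r$ and $\|u\|_{L^2(\R,\C^3)}\le c\|r\|_{L^2(\R,\C^3)}$.

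First, because $\omega\notin\Omega_0$ we have $V_-,V_*,V_+\neq 0$, so on each layer we can eliminate $u_1$ through $u_1=\ri V_j(\omega)^{-1}(r_1-\ri k u_3)$ exactly as in \eqref{u_1}, reducing to the $2\times 2$ system \eqref{u23} for $(u_2,u_3)$ on each of $I_1,I_2,I_3$ with forcing $\rho^{(1,2)}_j$ built from $r$. On the unbounded layers $I_1,I_3$, since $\omega\notin M^{(k)}_\pm$ we have $\Real(\mu_\pm)>0$ so the bounded homogeneous modes contribute one constant $C_-$ on $I_1$ and one constant $C_+$ on $I_3$, and the particular solutions can be taken as in the proof of Proposition \ref{T:resolv-1D}. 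On the middle layer $I_2$ both exponentials $e^{\pm\mu_* x}$ are admissible (the layer is bounded), giving two free constants $C_1,C_2$ and a particular solution $\tilde u_p^*$ on $(0,d)$ which is automatically in $H^1(I_2,\C^2)$ with $\|\tilde u_p^*\|_{H^1(I_2)}\le c\|r\|_{L^2(I_2,\C^3)}$ by the usual variation-of-parameters estimates.

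Next, the four interface equations \eqref{IFCs} at $x=0$ and $x=d$ (continuity of $u_2$ and $u_3$) produce a linear $4\times 4$ system for $(C_-,C_1,C_2,C_+)^\trans$ whose right-hand side has $L^2$-norm controlled by $\|r\|_{L^2(\R,\C^3)}$. I would introduce the shorthand
\[
\cA:=\mu_*V_+ +\mu_+V_*,\ \cB:=\mu_*V_+ -\mu_+V_*,\ \cC:=\mu_*V_- +\mu_-V_*,\ \cD:=\mu_*V_- -\mu_-V_*,
\]
so that solving the first two interface equations for $C_1,C_2$ in terms of $C_-$ (using $V_*\neq 0$ and $\mu_*\neq 0$, which follow from $\omega\notin\Omega_0$ and $W_*(\omega)\neq k^2$) and then substituting into the equations at $x=d$ gives, after reorganization, the scalar equation
\[
C_-\bigl(\cA\cC\, e^{\mu_* d}-\cB\cD\, e^{-\mu_* d}\bigr) = (\text{linear functional of }r).
\]
Together with $C_+$ determined from $C_-$, this shows that the $4\times 4$ system is uniquely solvable precisely when the scalar coefficient $\cA\cC\, e^{\mu_* d}-\cB\cD\, e^{-\mu_* d}$ is non-zero.

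The key step is now to verify that $\omega\in\cM^{(k)}$ indeed forces this non-vanishing. If $\cA\cC\neq 0$, the condition $\cA\cC\, e^{\mu_* d}=\cB\cD\, e^{-\mu_* d}$ is equivalent to $e^{2\mu_*d}=\cB\cD/(\cA\cC)$, i.e. to \eqref{E:cond-3-layers-B}; taking logarithms and allowing the branch choice $2\pi\ri m$ gives exactly \eqref{cond_d_cD|M|cD}, so $\omega\notin N^{(k)}$ excludes this scenario. If $\cA\cC=0$ and $\cB\cD=0$, a short case analysis using $\mu_\pm V_*\neq 0$ (again from $\omega\notin\Omega_0$ and $\omega\notin M^{(k)}_\pm$) shows that exactly the two pairings in \eqref{cond_d_cD|M|cD-2} can occur, i.e. $\omega\in O^{(k)}$; excluding this leaves either $\cA\cC\neq 0$ or the remaining degenerate case $\cA\cC=0,\ \cB\cD\neq 0$ (or vice versa), both of which make $\cA\cC\, e^{\mu_* d}-\cB\cD\, e^{-\mu_* d}\neq 0$. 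Hence for $\omega\in\cM^{(k)}$ the constants $C_-,C_1,C_2,C_+$ are uniquely determined and bounded linearly by $\|r\|_{L^2(\R,\C^3)}$.

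Finally, assembling the estimates: on $I_1$ and $I_3$ I reuse the bounds \eqref{E:est-rho-int} (the analogue on $I_3$ is obtained by translation $x\mapsto x-d$); on $I_2$ the exponentials are bounded on the compact interval, so $\|\tilde u_p^*\|_{L^2(I_2,\C^2)}\le c\|r\|_{L^2(I_2,\C^3)}$, and the contributions from $C_1,C_2$ are controlled by $\|r\|$. This gives $\|(u_2,u_3)\|_{L^2(\R,\C^2)}\le c\|r\|_{L^2(\R,\C^3)}$, and then \eqref{u_1} yields the corresponding bound for $u_1$. The $H^1$-regularity on each layer follows from differentiating the formulas, exactly as at the end of the proof of Proposition \ref{T:resolv-1D}, so $u\in D(A)$ and $L_k(\omega)$ is bijective with bounded inverse. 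The main obstacle is the algebraic identification of the $4\times 4$ solvability condition with the precise pair \eqref{cond_d_cD|M|cD}--\eqref{cond_d_cD|M|cD-2}, especially the sub-case analysis when some of $\cA,\cB,\cC,\cD$ vanish simultaneously; everything else is a routine three-layer extension of Proposition \ref{T:resolv-1D}.
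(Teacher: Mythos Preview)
Your approach is essentially the same as the paper's: eliminate $u_1$, use variation of parameters on each layer, and reduce the four interface conditions to a scalar solvability condition for $C_-$, which you correctly identify as $\cA\cC\,e^{\mu_* d}-\cB\cD\,e^{-\mu_* d}\neq 0$ (the paper writes this as \eqref{3layers_dis_relation_0} with the opposite sign convention on one factor). One small point: you invoke $\mu_*\neq 0$ as following from $W_*(\omega)\neq k^2$, but this condition is built into $N^{(k)}$ and $O^{(k)}$, not into $\cM^{(k)}$, so $\omega\in\cM^{(k)}$ does not exclude $\mu_*=0$; the paper's proof has the same tacit assumption, and the case $\mu_*=0$ requires only replacing $e^{\pm\mu_* x}$ by the degenerate fundamental system $(1,-\ri V_* x)$, $(0,1)$ on $(0,d)$, after which the interface system is still uniquely solvable.
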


	\begin{proof}
		For the whole proof let $\omega \in \Omega \setminus \Omega_0.$	As in the proof of Proposition \ref{T:resolv-1D}, given $r\in L^2(\R,\C^3),$ we firstly need to show the existence of a unique $u\in D(A)$ such that
		\beq\label{ODEsys_3layers}
		\left. \begin{array}{rcl}
			u_3 ' +\ri  V(x,\omega)u_2  &=& -r_2 \\
			u_2 '  + \ri  \frac{W(x,\omega)-k^2}{ V(x,\omega)} u_3     &=& r_3-\frac{k}{ V(x,\omega)}r_1
		\end{array} \right\} {\rm ~on~} (-\infty,0),\ (0, d) {\rm ~and~} (d,\infty),
		\eeq
		and $u_1:=\frac{\ri}{V(x,\omega)}\left(r_1-\ri k u_3\right)$.

		Analogously to the  proof of Proposition \ref{T:resolv-1D}, $L^2(\R)$-solutions $u$ (disregarding the interface conditions) exist if and only if $\Real(\mu_\pm)>0$, i.e.
		$$k^2-W_\pm(\omega) \notin (-\infty,0],$$
		in other words if and only if $\omega \notin M_+^{(k)}\cup M_-^{(k)}$. The corresponding $\tilde{u}:=(u_2,u_3)^T$ has the form
		\beq\label{3layers_solu_pm}
		\tilde{u}(x)=
		\begin{cases}
			C_-e^{\mu_- x}\begin{pmatrix}
				\mu_- \\ -\ri V_-(\omega)
			\end{pmatrix} + \tilde{u}_p^-(x), & x<0, \\
			C_*^{(1)}e^{\mu_*x} \begin{pmatrix}
				\mu_*\\ -\ri V_*(\omega)
			\end{pmatrix}
			+C_*^{(2)}e^{-\mu_*x}
			\begin{pmatrix}
				\mu_*\\ \ri V_*(\omega)
			\end{pmatrix}
			+ \tilde{u}_p^*(x), & x\in (0,d),\\
			C_+e^{-\mu_+ x}\begin{pmatrix}
				\mu_+ \\ \ri V_+(\omega)
			\end{pmatrix} + \tilde{u}_p^+(x), \quad & x>d,
		\end{cases}
		\eeq

		where  $C_\pm, C_*^{(1,2)}\in \C$ are arbitrary,
		\beq\label{E:up-3layers}
		\begin{aligned}
			\tilde{u}_p^-(x)&=\frac{1}{2}\begin{pmatrix}
				e^{\mu_-x}\mu_-\int_{x}^{0}\rho_-^{(1)}(s)e^{-\mu_-s}\dd s+e^{-\mu_-x}\mu_-\int_{-\infty}^{x}\rho_-^{(2)}(s)e^{\mu_-s}\dd s\\
				-e^{\mu_-x}\ri V_-(\omega)	\int_{x}^{0}\rho_-^{(1)}(s)e^{-\mu_-s}\dd s+e^{-\mu_-x}\ri V_-(\omega)\int_{-\infty}^{x}\rho_-^{(2)}(s)e^{\mu_-s}\dd s\end{pmatrix},\\
			\tilde{u}_p^*(x)&= \frac{1}{2}\begin{pmatrix}e^{\mu_*x}\mu_*\int_{x}^{d}\rho_*^{(1)}(s)e^{-\mu_*s}\dd s+e^{-\mu_*x}\mu_*\int_{0}^{x}\rho_*^{(2)}(s)e^{\mu_*s}\dd s\\
				-e^{\mu_*x}\ri V_*(\omega)	\int_{x}^{d}\rho_*^{(1)}(s)e^{-\mu_*s}\dd s+e^{-\mu_*x}\ri V_*(\omega)\int_{0}^{x}\rho_*^{(2)}(s)e^{\mu_*s}\dd s
			\end{pmatrix},\\
			\tilde{u}_p^+(x)&= \frac{1}{2}\begin{pmatrix}
				e^{\mu_+x}\mu_+\int_{x}^{\infty}\rho_+^{(1)}(s)e^{-\mu_+s}\dd s+e^{-\mu_+x}\mu_+\int_{d}^{x}\rho_+^{(2)}(s)e^{\mu_+s}\dd s\\
				-e^{\mu_+x}\ri V_+(\omega)	\int_{x}^{\infty}\rho_+^{(1)}(s)e^{-\mu_+s}\dd s+e^{-\mu_+x}\ri V_+(\omega)\int_{d}^{x}\rho_+^{(2)}(s)e^{\mu_+s}\dd s
			\end{pmatrix},
		\end{aligned}
		\eeq
		and
		\beq\label{E:rho-3layers}
		\rho_{\pm, *}^{(1)}(x):=\ri\frac{r_2(x)}{V_{\pm, *}(\omega)}+\frac{kr_1(x)}{V_{\pm, *}(\omega)\mu_{\pm, *}}-\frac{r_3(x)}{\mu_{\pm, *}},\quad \rho_{\pm, *}^{(2)}(x):=\ri\frac{r_2(x)}{V_{\pm, *}(\omega)}-\frac{kr_1(x)}{V_{\pm, *}(\omega)\mu_{\pm, *}}+\frac{r_3(x)}{\mu_{\pm, *}}.
		\eeq

		The regularity $u_2,u_3\in H^1(I_j)$, $j=1,2,3$, follows just like at the end of the proof of Proposition \ref{T:resolv-1D} and $u_1\in L^2(\R)$ holds thanks to $V_{\pm,*}(\omega)\neq 0$ and $r_1,u_3\in L^2(\R)$.

		To show that $u\in D(A)$, it remains to enforce the jump conditions $\llbracket u_2 \rrbracket =\llbracket u_3\rrbracket=0$ at $x=0,d$. Since
		\beq\label{3layers_IFC_u_2}
		\begin{aligned}
			\displaystyle\lim_{x\to 0^+}u_2(x)&=\left(C_*^{(1)}+C_*^{(2)}\right)\mu_*+\frac{\mu_*}{2}\int_{0}^{d}\rho_*^{(1)}(s)e^{-\mu_*s}\dd s,\\
			\lim_{x\to 0^-}u_2(x)&=	C_-\mu_-+\frac{\mu_-}{2}\int_{-\infty}^{0}\rho_-^{(2)}(s)e^{\mu_-s}\dd s,
		\end{aligned}
		\eeq
		and $V_*(\omega)\neq 0$, the condition $\llbracket u_2 \rrbracket =0$ at $x=0$ is equivalent to
		\begin{equation}\label{3layers_C_star_1}
			\left(C_*^{(1)}+C_*^{(2)}\right)\mu_*V_*(\omega)=\mu_-V_*(\omega)C_-+\frac{\mu_-V_*(\omega)}{2}\int_{-\infty}^{0}\rho_-^{(2)}(s)e^{\mu_-s}\dd s-\frac{\mu_*V_*(\omega)}{2}\int_{0}^{d}\rho_*^{(1)}(s)e^{-\mu_*s}\dd s.
		\end{equation}
		Similarly, as $\mu_*\neq 0$, the interface condition $\llbracket u_3\rrbracket=0$ (at $x=0$), is equivalent to
		\beq\label{3layers_C_star_2}
		\left(C_*^{(2)}-C_*^{(1)}\right)\mu_*V_*(\omega)=	-\mu_*V_-(\omega)C_- +\frac{\mu_* V_-(\omega)}{2}\int_{-\infty}^{0}\rho_-^{(2)}(s)e^{\mu_-s}\dd s+\frac{\mu_*V_*(\omega)}{2}\int_{0}^{d}\rho_*^{(1)}(s)e^{-\mu_*s}\dd s.
		\eeq
		Combining \eqref{3layers_C_star_1} and \eqref{3layers_C_star_2}, we get
		\beq\label{C_star_1}
		\begin{split}
			\mu_*V_*(\omega)C_*^{(1)}&=\frac{\mu_-V_*(\omega)+\mu_*V_-(\omega)}{2}C_-+\frac{\mu_-V_*(\omega)-\mu_*V_-(\omega)}{4}\int_{-\infty}^{0}\rho_-^{(2)}(s)e^{\mu_-s}\dd s\\
			&-\frac{\mu_*V_*(\omega)}{2}\int_{0}^{d}\rho_*^{(1)}(s)e^{-\mu_*s}\dd s,
		\end{split}
		\eeq
		\beq\label{C_star_2}
		\mu_*V_*(\omega)C_*^{(2)}=\frac{\mu_-V_*(\omega)-\mu_*V_-(\omega)}{2}C_- +\frac{\mu_-V_*(\omega)+\mu_* V_-(\omega)}{4}\int_{-\infty}^{0}\rho_-^{(2)}(s)e^{\mu_-s}\dd s.
		\eeq
		Analogously to the above, the interface conditions $\llbracket u_2\rrbracket=0$ and $\llbracket u_3\rrbracket=0$ at $x=d$ are
		\begin{equation}\label{3layers_C_plus_d}
			\begin{split}
				e^{-\mu_+d}\mu_+V_+(\omega)C_+&=\left(C_*^{(1)}e^{\mu_{*}d}+C_*^{(2)}e^{-\mu_{*}d}\right)\mu_*V_+(\omega)+\frac{\mu_*V_+(\omega)}{2}e^{-\mu_*d}\int_{0}^{d}\rho_*^{(2)}(s)e^{\mu_*s}\dd s\\
				&-\frac{\mu_+V_+(\omega)}{2}e^{\mu_+d}\int_{d}^{\infty}\rho_+^{(1)}(s)e^{-\mu_+s}\dd s.
			\end{split}
		\end{equation}
		and 	\beq\label{3layers_C_star_2_d}
		\begin{split}
			&	e^{-\mu_+d}\mu_+ V_+(\omega)C_+=\left(C_*^{(2)}e^{-\mu_*d}-C_*^{(1)}e^{\mu_*d}\right)\mu_+ V_*(\omega)+\frac{ \mu_+ V_*(\omega)}{2}e^{-\mu_*d}\int_{0}^{d}\rho_*^{(2)}(s)e^{\mu_*s}\dd s\\
			&+\frac{\mu_+  V_+(\omega)}{2}e^{\mu_+d}\int_d^{\infty}\rho_+^{(1)}(s)e^{-\mu_+s}\dd s,
		\end{split}
		\eeq
		respectively.

		Combining \eqref{3layers_C_plus_d} and \eqref{3layers_C_star_2_d}, we get
		\beq\label{C_plus_d}\begin{split}
			&e^{\mu_*d}\left(\mu_*V_+(\omega)+\mu_+V_*(\omega)\right)\mu_*V_*(\omega)C_*^{(1)}+e^{-\mu_*d}\left(\mu_*V_+(\omega)-\mu_+V_*(\omega)\right)\mu_*V_*(\omega)C_*^{(2)}\\
			&=\frac{ \mu_+V_*(\omega)-\mu_*V_+(\omega)}{2}\mu_*V_*(\omega)e^{-\mu_*d}\int_{0}^{d}\rho_*^{(2)}(s)e^{\mu_*s}\dd s+{\mu_+V_+(\omega)}\mu_*V_*(\omega)e^{\mu_+d}\int_{d}^{\infty}\rho_+^{(1)}(s)e^{-\mu_+s}\dd s,
		\end{split}
		\eeq
		Plugging \eqref{C_star_1} and \eqref{C_star_2} into \eqref{C_plus_d}, we obtain
		\begin{align}\label{3layers_dis_eq}
			&\left[e^{\mu_*d}\left(\mu_*V_+(\omega)+\mu_+V_*(\omega)\right)\left(\mu_-V_*(\omega)+\mu_*V_-(\omega)\right)+e^{-\mu_*d}\left(\mu_*V_+(\omega)-\mu_+V_*(\omega)\right)\left(\mu_-V_*(\omega)-\mu_*V_-(\omega)\right)\right]C_-\nonumber\\
			&=(\mu_+V_*(\omega)-\mu_*V_+(\omega))\mu_*V_*(\omega)e^{-\mu_*d}\int_{0}^{d}\rho_*^{(2)}(s)e^{\mu_*s}\dd s+2{\mu_+V_+(\omega)}\mu_*V_*(\omega)e^{\mu_+d}\int_{d}^{\infty}\rho_+^{(1)}(s)e^{-\mu_+s}\dd s\nonumber\\
			&+(\mu_+V_*(\omega)+\mu_*V_+(\omega))\mu_*V_*(\omega)e^{\mu_*d}\int_{0}^{d}\rho_*^{(1)}(s)e^{-\mu_*s}\dd s\nonumber\\
			&-\frac{1}{2}\left[e^{\mu_*d}\left(\mu_*V_+(\omega)+\mu_+V_*(\omega)\right)\left(\mu_-V_*(\omega)-\mu_*V_-(\omega)\right)\right. \nonumber\\
			&\qquad \left.+e^{-\mu_*d}\left(\mu_*V_+(\omega)-\mu_+V_*(\omega)\right)\left(\mu_-V_*(\omega)+\mu_*V_-(\omega)\right)\right]\int_{-\infty}^{0}\rho_-^{(2)}(s)e^{\mu_-s}\dd s.
		\end{align}
		Equation \eqref{3layers_dis_eq} has a unique solution $C_-\in\C$ if and only if
		\beq\label{3layers_dis_relation_0}
		e^{\mu_*d}\left(\mu_*V_+(\omega)+\mu_+V_*(\omega)\right)\left(\mu_-V_*(\omega)+\mu_*V_-(\omega)\right)+e^{-\mu_*d}\left(\mu_*V_+(\omega)-\mu_+V_*(\omega)\right)\left(\mu_-V_*(\omega)-\mu_*V_-(\omega)\right)\neq 0,
		\eeq
		which holds because $\omega\notin N^{(k)}\cup O^{(k)}$. Note that \eqref{cond_d_cD|M|cD-2} covers indeed all cases in which the factors of the exponential functions in \eqref{3layers_dis_relation_0} vanish. The cases $\mu_*=-\mu_+\frac{V_*}{V_+}=\mu_+\frac{V_*}{V_+}$ and $\mu_*=-\mu_-\frac{V_*}{V_-}=\mu_-\frac{V_*}{V_-}$ are both impossible because $\mu_\pm, V_*(\omega) \neq 0.$

		To prove that $\|u\|_{L^2(\R^,\C^3)}\leq c \|r\|_{L^2(\R,\C^3)}$ with $c$ independent of $r$, we use again (like in the  proof of Proposition \ref{T:resolv-1D}) estimates \eqref{E:est-rho-int} together with the obvious analogy for the bounded interval $(0,d)$, i.e.
		$$
		\begin{aligned}
			\left\|e^{\mu_* \cdot }\int_{\cdot}^d\rho_*^{(1)}(s)e^{-\mu_*s}\dd s\right\|_{L^2((0,d))}&\leq c\left\|\rho_*^{(1)}\right\|_{L^2((0,d))}, \\
			\left\|e^{-\mu_* \cdot }\int_{0}^{\cdot}\rho_*^{(2)}(s)e^{\mu_*s}\dd s\right\|_{L^2((0,d))}&\leq c\left\|\rho_*^{(2)}\right\|_{L^2((0,d))}.
		\end{aligned}
		$$
	\end{proof}

	Next, we prove an analogy to Proposition \ref{T:pt-spec-1D}, i.e., we determine the point spectrum of $L_k$ outside $\Omega_0$.
	\begin{prop}\label{3layers_T:pt-spec-1D}
		Let $k \in \R$.
		\beq\label{3layers_E:pt-sp1Dknz-omnz}
		\sigma_p(L_k)\setminus \Omega_0 = N^{(k)}\cup O^{(k)}
		\eeq
		with $N^{(k)}$ given in \eqref{E:Nk_3layers} and $O^{(k)}$ given in  \eqref{E:Ok_3layers}.
		All eigenvalues in $\sigma_p(L_k)\setminus \Omega_0$ are geometrically simple. Eigenvalue $\omega\in N^{(k)}$ is algebraically simple if
		$$
		\begin{aligned}
			0\neq \alpha(\omega):=&\frac{\mu_*^2-V_*^2+k^2}{\mu_*^2V_*}\frac{(V_*^2\mu_+\mu_--\mu_*^2V_+V_-)(\mu_-V_++\mu_+V_-)}{\mu_*^2V_-^2-\mu_-^2V_*^2}\\
			&+\frac{\mu_+^2-V_+^2+k^2}{2\mu_+^2V_*}(3\mu_+V_*-\mu_*V_+)+(\mu_*^2V_+^2-\mu_+^2V_*^2)\left(\frac{1}{\mu_-}\frac{\mu_-^2-V_-^2+k^2}{\mu_*^2V_-^2-\mu_-^2V_*^2}-d\frac{\mu_*^2+V_*^2-k^2}{\mu_*^2V_*^2}\right),
		\end{aligned}
		$$
		where we recall that $V_{\pm,*}=V_{\pm,*}(\omega), \mu_{\pm,*}=\mu_{\pm,*}(\omega).$
	\end{prop}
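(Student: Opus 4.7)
The plan is to mimic the proof of Proposition~\ref{T:pt-spec-1D} and to exploit the explicit representation of $L^2$-solutions of the homogeneous problem already computed inside the proof of Proposition~\ref{T:resolv-1D_3layers}.

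For the characterization of $\sigma_p(L_k)\setminus\Omega_0$, I observe that an eigenfunction $\psi\in D(A)\setminus\{0\}$ is an $L^2$-solution of the homogeneous version of \eqref{ODEsys_3layers}. The reduction used in the proof of Proposition~\ref{T:resolv-1D_3layers} shows that such $L^2$-solutions exist only when $\Real(\mu_\pm)>0$ (i.e.\ $\omega\notin M_+^{(k)}\cup M_-^{(k)}$) and $\mu_*\neq 0$; they have the form \eqref{3layers_solu_pm} with $\tilde u_p^-=\tilde u_p^*=\tilde u_p^+=0$ and four free constants $C_-,C_*^{(1)},C_*^{(2)},C_+\in\C$. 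Imposing $\llbracket u_2\rrbracket=\llbracket u_3\rrbracket=0$ at $x=0$ and $x=d$ yields the homogeneous $4\times 4$ system obtained from \eqref{3layers_C_star_1}--\eqref{3layers_C_star_2_d} by dropping the $r$-integrals; eliminating $C_*^{(1,2)}$ and $C_+$ in favor of $C_-$ exactly as in the resolvent proof reduces the existence of a nontrivial kernel to the vanishing of the left-hand side of \eqref{3layers_dis_relation_0}. That expression vanishes precisely in the two cases \eqref{cond_d_cD|M|cD} and \eqref{cond_d_cD|M|cD-2}, corresponding respectively to $\omega\in N^{(k)}$ and $\omega\in O^{(k)}$, which proves \eqref{3layers_E:pt-sp1Dknz-omnz}.

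Geometric simplicity in either case then follows because the solution space of the reduced interface system is one-dimensional: $C_-\in\C$ may be chosen freely and then $C_*^{(1)},C_*^{(2)},C_+$ are uniquely determined by \eqref{C_star_1}, \eqref{C_star_2} and \eqref{3layers_C_plus_d}. In the $O^{(k)}$ case, one of the two factor pairs in \eqref{cond_d_cD|M|cD-2} forces $C_*^{(2)}=0$ (or $C_*^{(1)}=0$) via \eqref{C_star_1}--\eqref{C_star_2}, while the companion vanishing factor at $x=d$ makes the eliminated equation \eqref{C_plus_d} trivially $0=0$, so \eqref{3layers_C_plus_d} still determines $C_+$ uniquely in terms of $C_-$.

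For algebraic simplicity at $\omega\in N^{(k)}$ I assume for contradiction that $L_k(\omega)u=\psi$ has a solution $u\in D(A)$ and apply the resolvent construction from the proof of Proposition~\ref{T:resolv-1D_3layers} with $r:=\psi$. Since $\omega$ satisfies \eqref{cond_d_cD|M|cD}, the bracket multiplying $C_-$ on the left of \eqref{3layers_dis_eq} vanishes, and the equation reduces to the Fredholm-type solvability condition that its right-hand side be zero. Substituting the piecewise-exponential components of $\psi$ (given by \eqref{3layers_solu_pm} with $\tilde u_p^{\pm,*}=0$) into \eqref{E:rho-3layers} and computing the four integrals over $(-\infty,0)$, $(0,d)$ and $(d,\infty)$ yields an algebraic expression in $k,\mu_\pm,\mu_*,V_\pm,V_*,d$. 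Using \eqref{cond_d_cD|M|cD} (equivalently \eqref{E:cond-3-layers-B}) to eliminate $e^{\pm 2\mu_*d}$ and collecting terms should reduce this expression to a nonzero constant multiple of $\alpha(\omega)$, so $\alpha(\omega)\neq 0$ precludes the existence of such a generalized eigenfunction.

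The main obstacle is precisely this last simplification. Unlike the two-layer case, in which the eigenvalue condition is the single scalar identity $\mu_-V_++\mu_+V_-=0$ that can be substituted directly at the level of the $V_\pm$, here \eqref{E:cond-3-layers-B} only constrains a ratio involving $e^{2\mu_*d}$, which rules out term-by-term elimination. Moreover, the bounded layer $(0,d)$ contributes integrals $\int_0^d\rho_*^{(1,2)}(s)e^{\mp\mu_*s}\,\dd s$ that produce the linear-in-$d$ term $d(\mu_*^2+V_*^2-k^2)/(\mu_*^2V_*^2)$ visible in the formula for $\alpha$. These two features make the algebra substantially heavier than in Proposition~\ref{T:pt-spec-1D}, but purely symbolic, so this step is best relegated to an appendix.
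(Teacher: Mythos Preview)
Your proposal is correct and follows essentially the same route as the paper: the explicit $L^2$-form of kernel elements plus the four interface conditions gives \eqref{3layers_E:pt-sp1Dknz-omnz} and geometric simplicity, and algebraic simplicity is tested by plugging $r=\psi$ into the resolvent construction and checking a solvability condition, with the heavy algebra deferred. The one methodological difference is in how the solvability condition is phrased: you reduce via the elimination leading to \eqref{3layers_dis_eq} and require its right-hand side to vanish, whereas the paper keeps the full $4\times4$ interface system $T(C_-,C_*^{(1)},C_*^{(2)},C_+)^\trans=b$, computes $\ker\overline T^\trans=\spann\,p$ explicitly, and tests $(b,p)_{\C^4}$; this is the same condition, but the matrix formulation is slightly cleaner to organise and makes the subsequent simplification to $\alpha(\omega)$ (after multiplication by a nonzero factor) more transparent.
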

	\begin{proof}
	Let $\omega \in \Omega\setminus \Omega_0$ and $k\in \R$.	Applying the arguments of the proof of Proposition \ref{T:resolv-1D_3layers} to the case $r=0$, we get that $L^2(\R)$-solutions $\psi$ are given by
		\beq\label{E:efn-3layers}
		\begin{aligned}
			\begin{pmatrix} \psi_2\\ \psi_3 \end{pmatrix}(x)&=\begin{cases} \tilde{A}\bspm \mu_+\\ \ri V_+(\omega)\espm e^{-\mu_+ x},  &\quad x>d,\\
				\tilde{B}\bspm \mu_*\\ \ri V_*(\omega)\espm e^{-\mu_* x}+	\tilde{C}\bspm \mu_*\\ -\ri V_*(\omega)\espm e^{\mu_* x}&\quad 0<x<d,\\
				\tilde{D}\bspm \mu_-\\ -\ri V_-(\omega)\espm e^{\mu_- x},  &\quad x<0,
			\end{cases}\\
			\psi_1(x) &= \frac{k\psi_3(x)}{V(\omega)}
		\end{aligned}
		\eeq
		with free constants $\tilde{A}$, $\tilde{B}$, $\tilde{C}$, $\tilde{D}\in \C$. The $L^2$-property holds if and only if $\omega \notin M_+^{(k)} \cup M_-^{(k)}$. Once again, $u_{2,3}\in H^1(I_j), j=1,2,3$,  holds. We can normalize $\psi$ so  that $\tilde{D}=1$. Indeed, if $\tilde{D}=0$, then using the interface conditions, one can show that $\tilde{A}=\tilde{B}=0$ and hence $\psi\equiv 0.$

		Next, we consider the interface conditions \eqref{IFCs}. The conditions $\llbracket \psi_2\rrbracket =0$ and $\llbracket \psi_3\rrbracket =0$ (at $x=0$ and $x=d$) yield
		\beq \label{3layers_E:ABrel}
		\begin{split}
			\mu_-=(\tilde{B}+\tilde{C})\mu_*,	\quad &V_-(\omega)= \left(\tilde{C}-\tilde{B}\right) V_*(\omega), \quad \tilde{A}e^{-\mu_+d}\mu_+=(\tilde{B}e^{-\mu_*d}+\tilde{C}e^{\mu_*d})\mu_*,\\
			&	\tilde{A}e^{-\mu_+d}V_+(\omega)=\left(\tilde{B}e^{-\mu_*d}-\tilde{C}e^{\mu_*d}\right) V_*(\omega),\quad
		\end{split}
		\eeq

		The first equation in \eqref{3layers_E:ABrel} yields the necessary condition $\mu_*\neq 0$, i.e. $W_*(\omega)\neq k^2$. Otherwise $\mu_-=0$ and $\psi_3\notin L^2(\R,\C).$ The first three conditions in \eqref{3layers_E:ABrel} now produce a unique set of coefficients $\tilde{A}, \tilde{B}, \tilde{C}$, namely
		$$
		\begin{aligned}
			\tilde{A}&=\frac{e^{\mu_+ d}}{2}\frac{\mu_*}{\mu_+}\left(e^{\mu_*d}\left(\frac{\mu_-}{\mu_*}+\frac{V_-}{V_*}\right)+e^{-\mu_*d}\left(\frac{\mu_-}{\mu_*}-\frac{V_-}{V_*}\right)\right),\\
			\tilde{B}&=	\frac{1}{2}\left(\frac{\mu_-}{\mu_*}-\frac{V_-}{V_*}\right), \quad \tilde{C}=\frac{1}{2}\left(\frac{\mu_-}{\mu_*}+\frac{V_-}{V_*}\right).
		\end{aligned}
		$$
		The last condition in \eqref{3layers_E:ABrel} for a nontrivial solution (i.e. for $\tilde{B}, \tilde{C}\neq 0$) is equivalent to
		\beq\label{3layers_E:ABrel2}
		e^{\mu_*d}\left(\mu_*V_+(\omega)+\mu_+V_*(\omega)\right)\left(\mu_-V_*(\omega)+\mu_*V_-(\omega)\right)+e^{-\mu_*d}\left(\mu_*V_+(\omega)-\mu_+V_*(\omega)\right)\left(\mu_-V_*(\omega)-\mu_*V_-(\omega)\right)= 0,
		\eeq
		i.e. \eqref{cond_d_cD|M|cD} or \eqref{cond_d_cD|M|cD-2}, i.e. $\omega \in N^{(k)}\cup O^{(k)}$. An eigenfunction $\varphi_0$ with $\|\varphi_0\|=1$ is, of course, $\varphi_0:=\psi/\|\psi\|$.

		The construction of the above solution $\psi\in D(A)$ guarantees the geometric simplicity of any eigenvalue  in the sense that $\lambda=0$ is a geometrically simple eigenvalue of  $L_k u=\lambda u$.

		Next we show that if $\alpha(\omega)\neq 0$, then the eigenvalue $\omega\in N^{(k)}$ is also algebraically simple, which by \eqref{E:gen-evec-eq} means that the problem
		\begin{equation}\label{Lpb_Jordan}	L_k(\omega)u=\psi,
		\end{equation}
		(with  $\psi$ as in \eqref{E:efn-3layers}) has no solution $u\in D(A)$. Let us set  $r:=\psi\in L^2(\R,\C^3)$. Assuming for a contradiction that $u\in D(A)$ is a solution of \eqref{Lpb_Jordan}, the variation of constants from the proof of Proposition \ref{T:resolv-1D_3layers} gives the explicit form \eqref{3layers_solu_pm}, \eqref{E:up-3layers}, and \eqref{E:rho-3layers}  with $u_1=\frac{\ri}{V(x,\omega)}\left(r_1-\ri k u_3\right)$.

		In order for $u$  to belong to $D(A)$, $u_{2,3}$ must satisfy the continuity at $x=0$ and $x=d$. This implies that the constants $C_-$, $C_*^{(1)}$, $C_*^{(2)}$, $C_+$ have to solve the linear system
		$$T\left(C_-, C_*^{(1)}, C_*^{(2)}, C_+\right)^\trans=b,$$
		where
		$$T:=\left(\begin{matrix}
			-\mu_- & \mu_* & \mu_* & 0 \\
			V_-(\omega) & -V_*(\omega) & V_*(\omega) & 0 \\
			0 & -\mu_*e^{\mu_* d} & -\mu_*e^{-\mu_* d} & \mu_+e^{-\mu_+d} \\
			0 & V_*(\omega) e^{\mu_* d} & -V_*(\omega) e^{-\mu_* d} & V_+(\omega)e^{-\mu_+d}
		\end{matrix}\right)$$
		and
		$$b:=\frac{1}{2}\left(\begin{matrix}
			\mu_-\int_{-\infty}^{0}\rho_-^{(2)}(s)e^{\mu_-s}\dd s-\mu_*\int_{0}^{d}\rho_*^{(1)}(s)e^{-\mu_*s}\dd s\\
			V_-(\omega)\int_{-\infty}^{0}\rho_-^{(2)}(s)e^{\mu_-s}\dd s+ V_*(\omega)\int_{0}^{d}\rho_*^{(1)}(s)e^{-\mu_*s}\dd s\\
			e^{-\mu_*d}\mu_*\int_{0}^{d}\rho_*^{(2)}(s)e^{\mu_*s}\dd s-e^{\mu_+d}\mu_+\int_{d}^{\infty}\rho_+^{(1)}(s)e^{-\mu_+s}\dd s\\
			e^{-\mu_*d} V_*(\omega)\int_{0}^{d}\rho_*^{(2)}(s)e^{\mu_*s}\dd s+e^{\mu_+d} V_+(\omega)	\int_{d}^{\infty}\rho_+^{(1)}(s)e^{-\mu_+s}\dd s
		\end{matrix}\right).$$
		Note that $T$ is singular since $T(1,\tilde{C},\tilde{B},\tilde{A})^\trans=0$, as dictated by \eqref{3layers_E:ABrel}.
		The functions $\rho^{(1,2)}_{*,\pm}$ in $b$ are as in \eqref{E:rho-3layers} with $r=\psi.$

		In order to find a contradiction and exclude the existence of a solution $u\in D(A)$ of \eqref{Lpb_Jordan}, we now prove that $b$ is \textit{not} orthogonal to the kernel of $\overline T^\trans$ if $\alpha(\omega)\neq 0$. Standard computations show that $\ker\overline T^\trans$ is one-dimensional and given by
		\begin{equation*}
			\ker\overline T^\trans=\spann\,p, \quad p:=\begin{pmatrix} e^{\overline{\mu_*}d}\overline{V_-}(\overline{\mu_*}\overline{V_+}+\overline{V_*}\overline{\mu_+})\\
				e^{\overline{\mu_*}d}\overline{\mu_-}(\overline{\mu_*}\overline{V_+}+\overline{V_*}\overline{\mu_+})\\
				\overline{V_+}(\overline{\mu_*}\overline{V_-}-\overline{\mu_-}\overline{V_*})\\
				-\overline{\mu_+}(\overline{\mu_*}\overline{V_-}-\overline{\mu_-}\overline{V_*})\\		\end{pmatrix}.
		\end{equation*}
		For the scalar product $\left(b,p\right)_{\C^4}$ a direct calculation shows that
		\beq\label{E:bp-IP}
		\begin{aligned}
		-4\ri\left(b,p\right)_{\C^4}=& e^{\mu_* d} (\mu_*V_++\mu_+V_*) \left(\frac{2}{\mu_-}(\mu_-^2-V_-^2+k^2) + \frac{d}{\mu_*^2V_*^2}(\mu_-^2V_*^2-\mu_*^2V_-^2)(\mu_*^2+V_*^2-k^2) \right) \\
		&+ \frac{de^{-\mu_* d}}{\mu_*^2V_*^2}(\mu_-V_*-\mu_*V_-)^2(\mu_+V_*-\mu_*V_+)(\mu_*^2+V_*^2-k^2) \\
		&+\frac{\sinh(\mu_* d)}{\mu_*^3V_*^2}\left((\mu_-V_*-\mu_*V_-)^2(\mu_*V_++\mu_+V_*) + (\mu_*^2V_-^2-\mu_-^2V_*^2)(\mu_*V_+-\mu_+V_*))\right)(\mu_*^2-V_*^2+k^2)\\
		&+\frac{2}{\mu_+^2V_*}(\mu_-V_*\cosh(\mu_*d)+\mu_*V_-\sinh(\mu_* d))(\mu_-V_*-\mu_*V_-)(\mu_+^2-V_+^2+k^2).
		\end{aligned}
		\eeq
		A number of simplifications, using \eqref{E:cond-3-layers-B} repeatedly, shows that after multiplication with $\frac{e^{\mu_*d}}{2}\frac{\mu_*V_++\mu_+V_*}{(\mu_*V_--\mu_-V_*)^2}$ (which is non-zero since $\omega \notin O^{(k)}$) the right hand side equals $\alpha(\omega)$.
	\end{proof}
	\begin{rem}
		One can easily check in \eqref{E:bp-IP} that $(b,p)_{\C^4}=0$ if $\omega \in O^{(k)}$ and the first case in \eqref{cond_d_cD|M|cD-2} is satisfied. We expect $(b,p)_{\C^4}$ to vanish also in the second case of \eqref{cond_d_cD|M|cD-2}.
		\end{rem}

	\begin{rem}
		Note that it can be possibly proved that $\alpha(\omega)\neq 0$ for all $\omega \in N^{(k)}$. We refrain from trying to show this and only check that $\alpha(\omega) \neq 0$ numerically for specific material parameters, see Example \ref{Ex:3layers-eigs}.
	\end{rem}

	Finally, we study isolatedness of eigenvalues in $\sigma_p(L_k)\setminus \Omega_0$. Again, we restrict our attention to eigenvalues in $N^{(k)}$.
	\begin{prop}\label{3layers_T:disc_sp_1D}
		Let $k \in \R$. Eigenvalue $\omega \in N^{(k)}$ is isolated if and only if
		$$d\neq \beta(\omega),$$
		where
		$$
		\begin{aligned}
			\beta(\omega):=&\frac{1}{(\omega+V_*)(V_*^2\mu_-^2-V_-^2\mu_*^2)(V_*^2\mu_+^2-V_+^2\mu_*^2)}\left\{(V_*(\omega+V_*)-2\mu_*^2)(\mu_*^2V_+V_--V_*^2\mu_+\mu_-)(\mu_+V_-+\mu_-V_+)\right.\\
			& \left. -\frac{\mu_*^2V_*}{\mu_+\mu_-}\left[\mu_-V_+(\omega+V_+)(\mu_*^2V_-^2-V_*^2\mu_-^2)+\mu_+V_-(\omega+V_-)(\mu_*^2V_+^2-V_*^2\mu_+^2)\right]\right.\\
			&\left. + 2\mu_*^2V_*\left[\mu_*^2(\mu_+V_-^2+\mu_-V_+^2)-V_*^2\mu_+\mu_-(\mu_++\mu_-)\right]\right\}.
		\end{aligned}
		$$
		Here, we have used the short notation $V_{*,\pm}:=V_{*,\pm}(\omega)$ again.

		In summary (together with Proposition \ref{3layers_T:pt-spec-1D}) we have for all $k\in \R$
		$$\{\omega\in N^{(k)}: \alpha(\omega)\neq 0, \beta(\omega) \neq d\}\subset \sigma_d(L_k)\setminus \Omega_0.$$
	\end{prop}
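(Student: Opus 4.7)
The plan is to extend the two-layer analysis of Proposition \ref{T:disc_sp_1D} to three layers. For $\omega \in N^{(k)}$ fixed, I would rerun the resolvent construction of Proposition \ref{T:resolv-1D_3layers} with $B(\omega)$ replaced by $B(\omega) + \lambda I$, i.e.\ with $V_\pm, V_*, \omega$ replaced by $V_\pm - \ri\lambda, V_* - \ri\lambda, \omega - \ri\lambda$ everywhere and with
\[
\mu_\pm^{(\lambda)} := \sqrt{k^2 + (\omega - \ri\lambda)(V_\pm - \ri\lambda)}, \qquad \mu_*^{(\lambda)} := \sqrt{k^2 + (\omega - \ri\lambda)(V_* - \ri\lambda)}.
\]
Since $\omega \in N^{(k)} \subset \Omega \setminus \Omega_0$, we have $\Real(\mu_\pm) > 0$, $\mu_* \neq 0$, and $V_{\pm,*}(\omega) \neq 0$; these conditions persist for $|\lambda|$ small, so the essential-spectrum-type obstructions stay away from $0$. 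Exactly as in Proposition \ref{T:resolv-1D_3layers}, $\lambda$ lies in $\sigma(L_k(\omega))$ iff $F(\lambda) = 0$, where $F(\lambda)$ is the left-hand side of the shifted version of \eqref{3layers_dis_relation_0}. Then $F$ is holomorphic near $\lambda = 0$ and $F(0) = 0$ by \eqref{3layers_E:ABrel2}, so isolatedness of $\omega$ as an eigenvalue of $L_k$ is equivalent to isolatedness of $\lambda = 0$ as a zero of $F$.

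Second, I would compute $F'(0)$ via the product rule, using
\[
\frac{d}{d\lambda}\mu_\pm^{(\lambda)}\Big|_{\lambda = 0} = -\frac{\ri(\omega + V_\pm)}{2\mu_\pm}, \quad \frac{d}{d\lambda}\mu_*^{(\lambda)}\Big|_{\lambda = 0} = -\frac{\ri(\omega + V_*)}{2\mu_*}, \quad \frac{d}{d\lambda}(V_{\pm,*} - \ri\lambda)\Big|_{\lambda = 0} = -\ri.
\]
Writing $F(\lambda) = e^{\mu_*^{(\lambda)} d} A(\lambda) + e^{-\mu_*^{(\lambda)} d} B(\lambda)$ with the obvious $A,B$, and invoking \eqref{3layers_E:ABrel2} in the form $e^{\mu_* d} A(0) = -e^{-\mu_* d} B(0)$, the two $d$-carrying contributions combine into a single term, giving
\[
F'(0) = -\frac{\ri(\omega + V_*)}{\mu_*}\,d\,e^{\mu_* d} A(0) + e^{\mu_* d} A'(0) + e^{-\mu_* d} B'(0).
\]
Substituting $e^{-\mu_* d}$ via \eqref{E:cond-3-layers-B} and factoring out the nonzero prefactor $e^{\mu_* d}(\mu_* V_+ + \mu_+ V_*)(\mu_- V_* + \mu_* V_-)$ (nonvanishing because $\omega \notin O^{(k)}$), the equation $F'(0) = 0$ becomes a linear equation in $d$. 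A direct, and rather tedious, simplification using \eqref{E:cond-3-layers-B} repeatedly then yields precisely $d = \beta(\omega)$. This algebraic bookkeeping, close in spirit to the $\alpha(\omega)$-computation in Proposition \ref{3layers_T:pt-spec-1D} but now tracking the explicit $d$-dependence, is the main technical obstacle.

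Third, I conclude: if $d \neq \beta(\omega)$, then $F'(0) \neq 0$, so $\lambda = 0$ is a simple zero of the holomorphic function $F$ and is therefore isolated both in $\{F = 0\}$ and in $\sigma(L_k(\omega))$, giving isolatedness of the eigenvalue $\omega$ of $L_k$. Combined with Proposition \ref{3layers_T:pt-spec-1D}, this suffices for the inclusion $\{\omega \in N^{(k)} : \alpha(\omega) \neq 0,\ \beta(\omega) \neq d\} \subset \sigma_d(L_k)\setminus \Omega_0$. For the converse assertion in the proposition, isolatedness of $\lambda = 0$ is equivalent to $F \not\equiv 0$ near $0$, so one must exclude the degenerate case where all higher Taylor coefficients also vanish. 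I expect the specific structure of $F$ (a combination of exponentials of square roots with polynomial coefficients) to prevent this under the constraint \eqref{E:cond-3-layers-B}, but verifying this rigorously would require inspecting $F''(0)$ under $F(0) = F'(0) = 0$, which I would handle as a separate follow-up calculation.
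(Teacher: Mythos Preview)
Your approach is essentially the paper's: fix $\omega\in N^{(k)}$, shift $B(\omega)\mapsto B(\omega)+\lambda I$ so that $V_{*,\pm}\mapsto V_{*,\pm}-\ri\lambda$ and $\mu_{*,\pm}\mapsto\mu_{*,\pm}^{(\lambda)}$, invoke Proposition~\ref{T:resolv-1D_3layers} to see that the $M^{(k,\lambda)}_\pm$, $O^{(k,\lambda)}$, $\Omega_0^{(\lambda)}$ obstructions stay away for small $\lambda$, and reduce the question to whether $\lambda=0$ is an isolated zero of the shifted dispersion relation. The only cosmetic difference is that the paper passes to the quotient form $f(\lambda):=e^{2\mu_*(\lambda)d}-g(\lambda)$ from \eqref{cond_d_cD|M|cD-lam} rather than your $F(\lambda)$ from the shifted \eqref{3layers_dis_relation_0}; since $\omega\notin O^{(k)}$ the two differ by a nonvanishing factor near $\lambda=0$, so the derivative condition is the same, and the paper obtains $f'(0)=0\Leftrightarrow d=\beta(\omega)$ just as you do.

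Your caveat about the converse is well taken. The paper's argument is: if $\lambda=0$ were not isolated in $\sigma(L_k(\omega))$ then a sequence of zeros $\lambda_j\to 0$ of $f$ would force $f'(0)=0$, hence $d=\beta(\omega)$. This is exactly the contrapositive of the ``if'' direction and does not, by itself, show that $d=\beta(\omega)$ implies non-isolatedness (a higher-order isolated zero of $f$ at $0$ is not excluded). So the paper's proof, like yours, in fact establishes the inclusion $\{\omega\in N^{(k)}:\alpha(\omega)\neq 0,\ \beta(\omega)\neq d\}\subset\sigma_d(L_k)\setminus\Omega_0$ and the ``if'' half of the biconditional; the ``only if'' is not addressed there either.
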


	\begin{proof}
		Analogously to Proposition \eqref{T:disc_sp_1D} we choose $\omega \in N^{(k)}$ and need to show the existence of $\delta>0$ such that
		$$\lambda \in B_\delta(0)\setminus\{0\}\subset \C \ \Rightarrow \ \lambda \in \rho(L_k(\omega)).$$
		Hence, we work with the linear pencil $L_k^{(\lambda)}(\omega):=L_k(\omega)-\lambda I = A-(B(\omega)+\lambda I)$ and need to show
	$$\omega  \in \rho(L_k^{(\lambda)}) \quad \forall \lambda \in B_\delta(0)\setminus\{0\}\subset \C .$$
Let us define
$$\mu_{*,\pm}(\lambda):=\sqrt{k^2+(\omega-\ri \lambda)(V_{*,\pm}(\omega)-\ri \lambda)}, \quad  V_{*,\pm}(\omega,\lambda):=V_{*,\pm}(\omega)-\ri\lambda.$$

		Based on Proposition \ref{T:resolv-1D_3layers} we have
		$$\rho(L_k^{(\lambda)})\supset \Omega\setminus (M^{(k,\lambda)}_+\cup M^{(k,\lambda)}_-\cup N^{(k,\lambda)} \cup O^{(k,\lambda)}\cup \Omega_0^{(\lambda)}),$$
		where
	\[
			\begin{split}
			M^{(k,\lambda)}_{\pm}:= &~\{ \omega \in  \Omega \setminus \Omega_0: -(\omega-\ri \lambda)V_{\pm}(\omega,\lambda)\in [k^2,\infty)\}, \\
			N^{(k,\lambda)}:= & ~\{ \omega \in \Omega \setminus (\Omega_0 \cup  M^{(k,\lambda)}_{+}\cup M^{(k,\lambda)}_{-}):  -(\omega-\ri \lambda)V_{*}(\omega,\lambda)\neq k^2, \eqref{cond_d_cD|M|cD-lam} \text{ holds and } \eqref{cond_d_cD|M|cD-2-lam} \text{ does not hold}  \},\\
			O^{(k,\lambda)}:= &~\{ \omega \in  \Omega \setminus (\Omega_0 \cup  M^{(k,\lambda)}_{+}\cup M^{(k,\lambda)}_{-}): \eqref{cond_d_cD|M|cD-2-lam} \text{ holds}  \},\\
			\Omega_0^{(\lambda)}:=& ~\{\omega\in \Omega:\omega-\ri \lambda=0 \text{ or } V_-(\omega,\lambda)=0 \ \text{or} \  V_*(\omega,\lambda)=0 \ \text{or} \ V_+(\omega,\lambda)=0\},
		\end{split}
		\]
		\beq\label{cond_d_cD|M|cD-lam}
			e^{2\mu_*(\lambda)d}=\frac{\left(\mu_*(\lambda)V_+(\omega,\lambda)-\mu_+(\lambda)V_*(\omega,\lambda)\right)\left(\mu_*(\lambda)V_-(\omega,\lambda)-\mu_-(\lambda)V_*(\omega,\lambda)\right)}{\left(\mu_*(\lambda)V_+(\omega,\lambda)+\mu_+(\lambda)V_*(\omega,\lambda)\right)\left(\mu_-(\lambda)V_*(\omega,\lambda)+\mu_*(\lambda)V_-(\omega,\lambda)\right)}=:g(\lambda),
		\eeq
			\beq\label{cond_d_cD|M|cD-2-lam}
		\begin{cases}
			&\mu_*(\lambda)V_+(\omega,\lambda)+\mu_+(\lambda)+V_*(\omega,\lambda)=0,\   \mu_*(\lambda)V_-(\omega,\lambda)-\mu_-(\lambda)V_*(\omega,\lambda)=0\\
			&\text{or} \\
			& \mu_*(\lambda)V_+(\omega,\lambda)-\mu_+(\lambda)V_*(\omega,\lambda)=0,\  \mu_*(\lambda)V_-(\omega,\lambda)+\mu_-(\lambda)V_*(\omega,\lambda)=0.
		\end{cases}
		\eeq
		Clearly, because $\omega \notin (M_+^{(k)}\cup M_-^{(k)} \cup O^{(k)}\cup \Omega_0)$, we get $\omega \notin (M_+^{(k,\lambda)}\cup M_-^{(k,\lambda)} \cup O^{(k,\lambda)}\cup \Omega_0)$ for all $\lambda$ small enough.

		It remains to be shown that \eqref{cond_d_cD|M|cD-lam} cannot hold if $\lambda$ is nonzero and small enough. Otherwise there would be a sequence $(\lambda_j)\subset \C$ such that $\lambda_j \to 0$ and $f(\lambda_j):=e^{2\mu_*(\lambda_j)d}-g(\lambda_j)=0 \ \forall j.$ Due to the $C^1$ nature of $f$, this would imply $f'(0)=0.$ Because $f'(0)=2g(0)\mu_*'(0)d-g'(0)=-\ri g(0)d(\omega+V_*(\omega))\frac{1}{\mu_*(0)}-g'(0)$, the equality $f'(0)=0$ is equivalent to
		$$d= \frac{\ri\mu_*(0)g'(0)}{g(0)(\omega+V_*(\omega))}.$$
		A direct calculation shows that $ \frac{\ri\mu_*(0)g'(0)}{g(0)(\omega+V_*(\omega))}=\beta(\omega)$.
	\end{proof}
\bex\label{Ex:3layers-eigs}
	We choose the wave number $k=2$ and the $\PT$-symmetric setting (see Theorem \ref{T:PT_sym} for the definition of $\PT$-symmetry) with three homogenous layers
	 $$V_*(\omega)=-\frac{6}{5}\omega, \ V_-(\omega)=-\omega\left(1-\frac{2\pi}{\omega^2-\ri\frac{\omega}{2}}\right), \ V_+(\omega)=\overline{V_-(\omega)}.$$
	A width $d$ of the middle layer and corresponding eigenvalues can be found using \eqref{cond_d_cD|M|cD}. In Figure \ref{F:d-3layers} we plot $d_j(\omega)$ for $j=-1,0,1$ and $\omega \in (0,5)$. Apparently $d_{-1}(\omega)\notin (0,\infty)$ for all $\omega \in (0,5)$, $d_0(\omega)\in (0,\infty)$ for $\omega \in (0,a)\cup (b,5)$, where $a\approx 2.5$, $b\approx 3.75$, and, finally, $d_1(\omega)\in (0,\infty)$ for $\omega \in (c,5)$ with $c\approx 1.83$.
	\begin{figure}[ht!]
		\centering
			\includegraphics[width=1.0\linewidth]{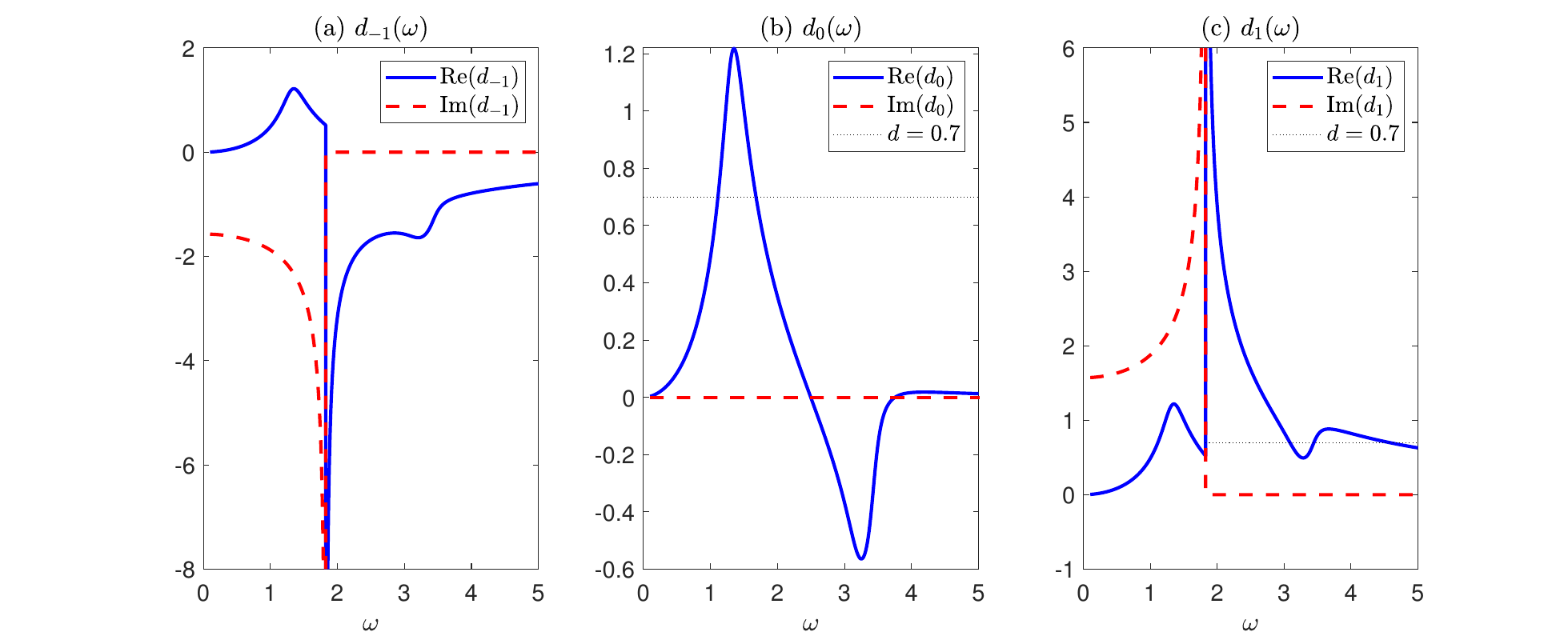}
		\caption{{\footnotesize Functions $d_j(\omega)$ with $j=-1,0,1$ from \eqref{cond_d_cD|M|cD}. }}
		\label{F:d-3layers}
	\end{figure}

	As one can see, for example, for $d=0.7$ there are at least five eigenvalues in $N^{(k)}$, see the intersections with the dotted line $d=0.7$ in Figure  \ref{F:d-3layers} (b) and (c). The approximate values are $\omega_0\in \{1.11, 1.68, 3.09, 3.44 , 4.57\}$, the first two of which are visible in Figure \ref{F:d-3layers} (b) and the last three in Figure \ref{F:d-3layers} (c).

	Next, we set the width of the middle layer equal to $d_1(\omega)$ and check the algebraic simplicity and isolatedness of the corresponding eigenvalue $\omega \in (c,5)$ (i.e. the $x-$coordinate of the point on the graph of the real part in Figure  \ref{F:d-3layers}  (c)). Using Propositions \ref{3layers_T:pt-spec-1D} and \ref{3layers_T:disc_sp_1D}, we plot in Figure \ref{F:alpha-beta-3layers} the quantities $\alpha(\omega)$ and $|d_1(\omega)-\beta(\omega)|$ from Propositions \ref{3layers_T:pt-spec-1D} and \ref{3layers_T:disc_sp_1D}. Clearly, $\alpha$ remains positive on the whole $(c,5)$ and $d_1\neq \beta$ everywhere except for at most two points, namely $\omega \approx 2.15$ and $\omega \approx 3.3$. Hence, the last three eigenvalues at $d=0.7$ mentioned above, i.e. $\{3.09, 3.44 , 4.57\}$ are all algebraically simple and isolated.
	\begin{figure}[ht!]
	\centering
	\includegraphics[width=0.75\linewidth]{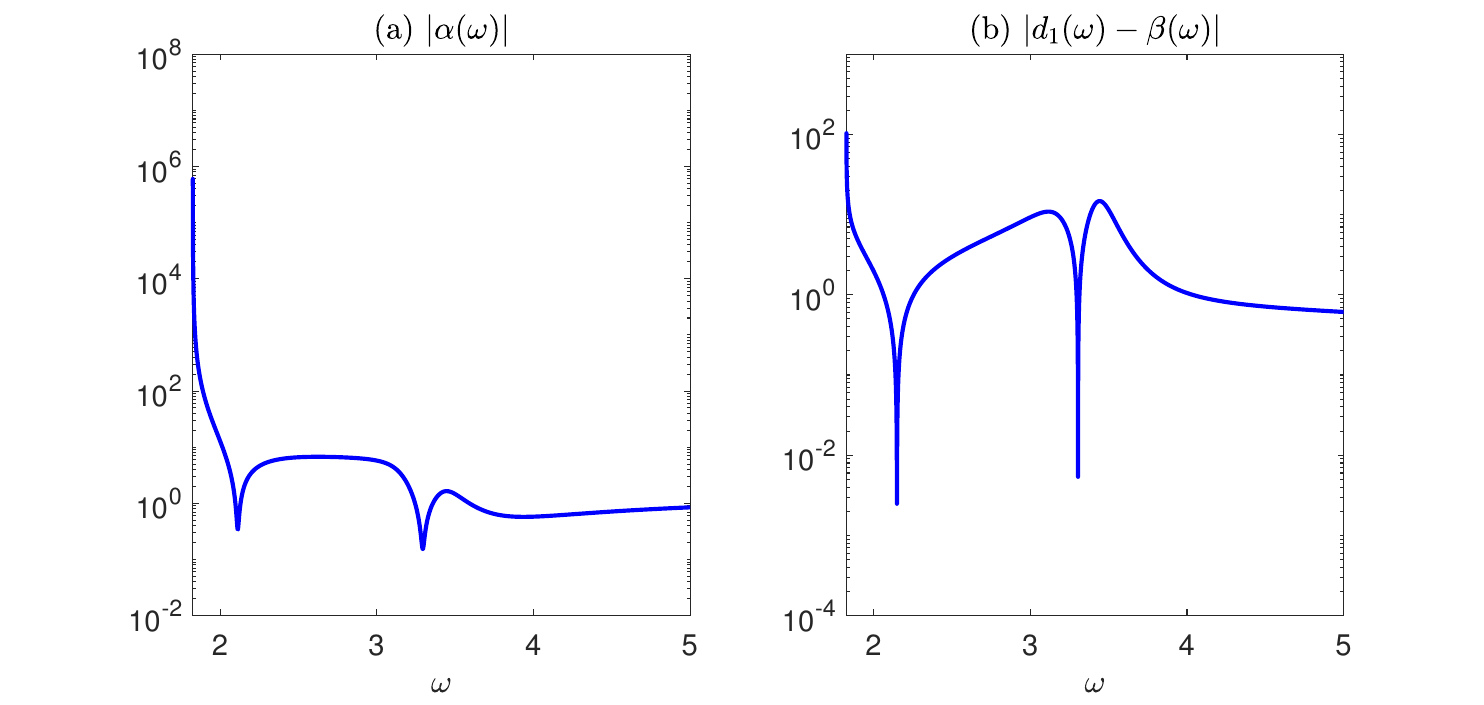}
	\caption{{\footnotesize Quantities  $\alpha(\omega)$ and $|d_1(\omega)-\beta(\omega)|$ with $\alpha$ and $\beta$ from Propositions \ref{3layers_T:pt-spec-1D} and \ref{3layers_T:disc_sp_1D}  respectively.}}
	\label{F:alpha-beta-3layers}
\end{figure}
	\eex


	\section{Bifurcation of nonlinear surface plasmons}\label{bifurc}

	The aim of this section is to prove Theorem \ref{mainthm_loc_bifurc}. As stated in this theorem, the solution of \eqref{system} is expanded in
	\beq\label{omega_varphi}
	\omega = \omega_0+\eps \nu + \eps^2 \sigma, \quad u = \eps^{1/2}\varphi_0 + \eps^{3/2} \phi + \eps^{5/2} \psi,
	\eeq
	where $\omega_0 \in  \C, \varphi_0 \in D(A)$ are fixed by the linear eigenvalue problem (and the normalization $\|\varphi_0\|=1$) and $\nu \in \C, \phi\in D(A)\cap \langle \varphi_0^*\rangle^\perp$ are fixed by \eqref{E:nuA} and \eqref{E:phi-eq-A}.
	Note that $\nu$ is well defined because of the assumption (A-T) and that the normalization $\langle \varphi_0, \varphi_0^*\rangle =1$ is allowed because $\langle \varphi_0, \varphi_0^*\rangle =0$ would imply the solvability of $L_k(\omega_0)v=\varphi_0$ for $v\in D(A)$, which contradicts the algebraic simplicity of $\omega_0$.

	We decompose the nonlinear equation \eqref{system} using the projection
	$$P_0:D(A)\to\langle\varphi_0\rangle, u\mapsto \langle u, \varphi_0^*\rangle \varphi_0$$
	and the complementary projection $Q_0:=I-P_0: D(A)\to \langle \varphi_0^* \rangle^\perp$. This discussion as well as the subsequent fixed point arguments are very similar to those in \cite{dohnal2021eigenvalue}. We include them here (in a more compact form) for completeness.

	Applying $P_0$ to equation \eqref{system}, we get
	\begin{equation}\label{proj_P0_1}
		\begin{split}
			\langle L_k(\cdot,\omega)u,\varphi_0^*\rangle&=\langle h(\cdot,\omega,u),\varphi_0^*\rangle\\
			&=\langle h(\cdot,\omega,u)-\varepsilon^{\frac32}h(\cdot,\omega_0,\varphi_0),\varphi_0^*\rangle+\varepsilon^{\frac32}\langle h(\cdot,\omega_0,\varphi_0),\varphi_0^*\rangle.
		\end{split}
	\end{equation}

	On the other hand, Taylor expanding $B(\omega)$ in $\omega_0$ up to order two (using assumption (A-V)), we have
	\begin{equation*}
		\begin{split}
			\langle L_k(\cdot,\omega)u,\varphi_0^*\rangle&=\langle L_k(\cdot,\omega_0)u,\varphi_0^*\rangle+\langle\left(L_k(\cdot,\omega)-L_k(\cdot,\omega_0)\right)u,\varphi_0^*\rangle\\
			&=\langle u,L_k(\cdot,\omega_0)^*\varphi_0^*\rangle-\langle\left(B(\cdot,\omega)-B(\cdot,\omega_0)\right)u,\varphi_0^*\rangle\\
			&=-\left\langle\left(\partial_\omega B(\cdot,\omega_0)(\omega-\omega_0)+\frac12\partial_\omega^2B(\cdot,\omega_0)(\omega-\omega_0)^2+I(\cdot, \omega)(\omega-\omega_0)^3\right)u,\varphi_0^*\right\rangle,
		\end{split}
	\end{equation*}
	where
	\begin{equation*}
		I(x,\omega):=\frac{1}{2\pi \mathrm{i}}\int_{\partial B_r(\omega_0)}\frac{B(x,z)}{(z-\omega_0)^3(z-\omega)}\dd z
	\end{equation*}
	for any $r<\delta$ and $\omega \in B_r(\omega_0)$.
	Inserting now the expansions of $\omega$ and $u$ from \eqref{omega_varphi} produces
	\begin{equation}\label{proj_P0_2}
		\begin{split}
			-\langle L_k(\cdot,\omega)u,\varphi_0^*\rangle&=\varepsilon^{\frac{3}{2}}\nu\langle\partial_\omega B(\cdot,\omega_0)\varphi_0,\varphi_0^*\rangle+\varepsilon^{\frac{5}{2}}\sigma\langle\partial_\omega B(\cdot,\omega_0)\varphi_0),\varphi_0^*\rangle\\
			&\quad+\varepsilon^{\frac{5}{2}}\bigg\langle\bigg(\nu\partial_\omega B(\cdot,\omega_0)\phi+\frac{\nu^2}2\partial_\omega^2B(\cdot,\omega_0)\varphi_0\bigg),\varphi_0^*\bigg\rangle+\langle v(\cdot,\nu,\sigma,\phi,\psi),\varphi_0^*\rangle,
		\end{split}
	\end{equation}
	where
	\begin{equation}\label{v}
		\begin{split}
			v(\cdot,\nu,\sigma,\phi,\psi):&=\varepsilon^{\frac{7}{2}}\left(\sigma\partial_\omega B(\cdot,\omega_0)\phi+(\nu+\varepsilon\sigma)\partial_\omega B(\cdot,\omega_0)\psi+\frac{\nu^2}2\partial_\omega^2B(\cdot,\omega_0)(\phi+\varepsilon\psi)\right.\\
			&\left.\quad+\frac12\partial_\omega^2B(\cdot,\omega_0)(2\nu\sigma+\varepsilon\sigma^2)(\varphi_0+\varepsilon\phi+\varepsilon^{2}\psi)\right)+\varepsilon^\frac72 I(\cdot,\omega)(\nu+\eps \sigma)^3(\varphi_0+\varepsilon\phi+\varepsilon^{2}\psi)
		\end{split}
	\end{equation}
	with $\omega =\omega_0+\eps \nu+\eps^2\sigma.$

	The first inner product on the right hand side of \eqref{proj_P0_1} is $o(\eps^\frac{3}{2})$ due to the Lipschitz continuity of $h$, see \eqref{E:h-Lip-est}. Comparing now \eqref{proj_P0_1} and \eqref{proj_P0_2}, the terms of order $\eps^\frac{3}{2}$ match if and only if we set
	\begin{equation}\label{nu_GEN}
		\nu:=-\frac{\langle h(\cdot,\omega_0, \varphi_0),\varphi_0^*\rangle}{\langle\partial_\omega B(\cdot,\omega_0)\varphi_0,\varphi_0^*\rangle},
	\end{equation}
	which is well-defined thanks to assumption (A-T). From the rest of \eqref{proj_P0_1}-\eqref{proj_P0_2} we obtain
	\begin{equation}\label{3_sys}
		\begin{split}
			\varepsilon^{\frac{5}{2}}\sigma\langle\partial_\omega B(\cdot,\omega_0)\varphi_0,\varphi_0^*\rangle=&-\varepsilon^{\frac{5}{2}}\Big\langle\left(\nu\partial_\omega  B(\cdot,\omega_0)\phi+\frac{\nu^2}2\partial_\omega^2B(\cdot,\omega_0)\varphi_0\right),\varphi_0^*\Big\rangle\\
			&-\langle v(\cdot,\nu,\sigma,\phi,\psi),\varphi_0^*\rangle-\langle h(\cdot,\omega,u)-\varepsilon^{\frac{3}{2}}h\left(\cdot, \omega_0, \varphi_0\right),\varphi_0^*\rangle.
		\end{split}
	\end{equation}

	Next, we apply $Q_0$ to  \eqref{system}.  First, we write
	\begin{equation*}
		Q_0L_k(\cdot,\omega)u
		=Q_0L_k(\cdot,\omega_0)Q_0(\varepsilon^{\frac{3}{2}}\phi+\varepsilon^{\frac{5}{2}}\psi) -Q_0\big( B(\cdot,\omega)-B(\cdot,\omega_0)\big)u.
	\end{equation*}
	The application of $Q_0$ to \eqref{system} produces
	\begin{equation}\label{E:Q-eq}
		\begin{split}
			&Q_0L_k(\cdot,\omega_0)Q_0(\phi+\varepsilon\psi)=\varepsilon^{-\frac{3}{2}} \left( Q_0h(\cdot,\omega,u)+Q_0\big(B(\cdot,\omega)-B(\cdot,\omega_0)\big)u\right)\\
			&=\varepsilon^{-\frac{3}{2}} Q_0h(\cdot,\omega,u)+ Q_0\left((\nu+\eps\sigma)\pa_\omega B(\cdot,\omega_0) + \eps\frac{(\nu+\eps\sigma)^2}{2}\pa_\omega^2B(\cdot,\omega_0)+\eps^{2}I(\cdot,\omega)(\nu+\eps\sigma)^3\right)(\varphi_0+\eps\phi+\eps^2\psi).
		\end{split}
	\end{equation}

	It remains to solve the system \eqref{3_sys}, \eqref{E:Q-eq} for $\sigma$ and $\psi$ with $\phi\in D(A)\cap \langle \phi_0^*\rangle^\perp$ given by \eqref{E:phi-eq-A}.

	\subsection{Proof of Theorem  \ref{mainthm_loc_bifurc}}

	We take advantage of the fact that the nonlinearity $h$ is well defined in $\cH^1$ due to the algebra property of $H^1(a,b)$ for any $(a,b)\subset \R$. In other words, $h(\cdot,\omega,u)\in \cH^1$ for any $u\in \cH^1$, see Lemma \ref{L:h-lip}.

	First of all, we reduce equation \eqref{E:Q-eq} by eliminating the $O(1)$ part.
	Because $h(x,\omega,\alpha u)=\alpha^3 h(x,\omega,u)$ for all $\alpha\geq 0$ and $(x,\omega,u)\in \R\times B_\delta(\omega_0) \times (D(A)\cap \cH^1)$, the term $h(x,\omega,u)$ in \eqref{E:Q-eq} equals $\eps^{3/2}h(x,\omega,\varphi_0+\varepsilon\phi+\varepsilon^{2}\psi)$. The $O(1)$ part of \eqref{E:Q-eq} thus holds if and only if
	$$	Q_0L_k(\cdot,\omega_0)Q_0\phi=Q_0(h(\cdot, \omega_0, \varphi_0)+\nu\partial_\omega B(\cdot,\omega_0)\varphi_0).$$
	Due to \eqref{nu_GEN} we have  $P_0(h(\cdot, \omega_0, \varphi_0)+\nu\partial_\omega B(\cdot,\omega_0)\varphi_0)=0$ and hence
	\begin{equation}\label{phi_GEN}
		Q_0L_k(\cdot,\omega_0)Q_0\phi=h(\cdot, \omega_0, \varphi_0)+\nu\partial_\omega B(\cdot,\omega_0)\varphi_0,
	\end{equation}
	which is equivalent to \eqref{E:phi-eq-A} in $D(A)\cap \cH^1\cap \langle \phi_0^*\rangle^\perp$ and uniquely solvable by the following lemma. Note that the right hand side is in $\cH^1$ because $\pa_\omega V(\cdot,\omega_0)\in W^{1,\infty}(I_j)$ for all $j$ and $\varphi_0\in \cH^1$ (and hence by the algebra property, also $h(\cdot,\omega,\varphi_0)\in \cH^1$). The regularity of $\varphi_0$ follows from Lemma \ref{L:phi0-reg}.
	\blem\label{L:Lk-inv-smooth}
	$(Q_0L_kQ_0)^{-1}:Q_0\cH^1 \to Q_0(\cH^1\cap D(A))$ is a bounded operator, i.e. $\|(Q_0L_kQ_0)^{-1}\|\leq M_L$ with $M_L>0$.
	\elem
	\bpf
	The boundedness of the inverse $(Q_0L_kQ_0)^{-1}:Q_0L^2 \to Q_0D(A)$ (with $D(A)$ equipped with the graph norm) follows from the closed range theorem \cite[Sec. VI.6]{DS1988}. Indeed, the operator $Q_0L_kQ_0$ is Fredholm (see \cite[Theorem IV.5.28]{kato2013perturbation}, where the fact that $\lambda=0$ is a simple isolated eigenvalue of the eigenvalue problem \eqref{Lpb} is used).

	The $H^1$ regularity on each layer holds for the second and the third component of each element in $D(A)$ by definition of $D(A)$. The first component of $u:=(Q_0L_kQ_0)^{-1}r$ with $r\in Q_0\cH^1$ is given by $u_1=\frac{\ri}{V(\cdot,\omega)}(r_1-\ri k u_3)$, which lies in $\cH^1$ since $r_1, u_3\in \cH^1$ and $1/V_j\in W^{1,\infty}(I_j)$ for all $j$.
	\epf

	\begin{rem}
		As we are dealing with an ODE problem, the unique solvability in Lemma \ref{L:Lk-inv-smooth} can be shown explicitly using the variation of parameters; in Appendix \ref{A:solvable} we provide the corresponding calculation for the example of two homogenous layers. An analogous calculation is, in principle, possible for $N$ layers.
	\end{rem}

	Having satisfied \eqref{phi_GEN}, the rest of \eqref{E:Q-eq} produces the following equation for $(\sigma,\psi) \in \C\times (D(A)\cap \cH^1)$:
	\begin{equation*}
		\begin{split}
			Q_0L_k(\cdot,\omega_0)Q_0\psi=&\eps^{-1} Q_0\big(h(\cdot,\omega,\varphi_0+\eps\phi+\eps^2\psi)-h(\cdot, \omega_0, \varphi_0)\big)+Q_0(\nu\partial_\omega B(\cdot,\omega_0)( \phi+\eps\psi))\\
			&+Q_0\left(\sigma \partial_\omega B(\cdot,\omega_0)+\frac{(\nu+\eps\sigma)^2}{2}\partial_\omega^2B(\cdot,\omega_0) + \eps I(\cdot,\omega)(\nu+\eps\sigma)^3\right)(\varphi_0+\eps \phi+\eps^2\psi)\\
			=:&R(\sigma,\psi),
		\end{split}
	\end{equation*}
	where $\omega=\omega_0+\eps\nu+\eps^2\sigma$. We write this as the fixed point problem
	\beq\label{4_sys}
	\psi = G(\sigma,\psi):= (Q_0L_k(\omega_0)Q_0)^{-1}R(\sigma,\psi).
	\eeq

	The system \eqref{3_sys}, \eqref{4_sys} is solved below for $\sigma$ and $\psi$ via a nested Banach fixed point approach analogously to \cite{dohnal2021eigenvalue}.
	First we solve \eqref{4_sys} for $\psi\in B_{r_2}(0)\subset D(A)$ for each $\sigma\in B_{r_1}(0)\subset\C$ fixed with $r_1>0$ arbitrary and a suitable $r_2=r_2(r_1)$. The radius $r_2$ is chosen so that $G(\sigma,\cdot):B_{r_2}(0)\to B_{r_2}(0)$ is a contraction if $\varepsilon>0$ is small enough.

	Then, having obtained $\psi=\psi(\sigma)\in B_{r_2}(0)$, we shall solve equation \eqref{3_sys} for $\sigma$ in $B_{r_1}(0)\subset \C$  with a suitable $r_1$. Note that the fixed point argument for equation \eqref{3_sys} requires the Lipschitz continuity of $\sigma\mapsto\psi(\sigma)$, which we verify below.

	\begin{rem}
		The alternative approach of solving the system \eqref{3_sys}, \eqref{4_sys} for $(\sigma,\psi)\in B_{r_1}(0)\times B_{r_2}(0)\subset \C\times (D(A)\cap\cH^1)$ via a fixed point argument simultaneously is possible. However, $R(\sigma_1,\psi)-R(\sigma_2,\psi)$ involves the term $Q_0\partial_\omega B(\omega_0)\varphi_0(\sigma_1-\sigma_2)$, which is $O(1)$ and prevents the contraction property. This can be avoided by substituting for $\sigma_1-\sigma_2$ from \eqref{3_sys} but this makes the approach more technical.
	\end{rem}

	To prepare for the fixed point argument, let us start with some estimates on $R$.

	Denote by $c$  a positive constant that	may vary from line to line below but be independent of $\eps$, $r_1$, and $r_2$ for $\eps=\eps(r_1,r_2)>0$ small enough.

	Firstly, for all $|\sigma|\leq r_1$ and  $\|\psi\|_{\cH^1}\leq r_2$, we use Proposition \ref{L:h-lip} to estimate
	\beq\label{E:h-Lip-est}
	\begin{aligned}
		&\|h(\cdot,\omega,\varphi_0+\eps\phi+\eps^2\psi)-h(\cdot, \omega_0, \varphi_0)\|_{\cH^1}\\
		&\qquad \leq\|h(\cdot,\omega,\varphi_0+\eps\phi+\eps^2\psi)-h(\cdot, \omega_, \varphi_0)\|_{\cH^1} + \|h(\cdot,\omega,\varphi_0)-h(\cdot, \omega_0, \varphi_0)\|_{\cH^1}\\
		&\qquad \leq c^{(1)}_a\eps \left(\|\varphi_0+\eps \phi +\eps^2\psi\|_{\cH^1}^2\right)\|\phi+\eps\psi\|_{\cH^1}+c^{(2)}_a\|\varphi_0\|_{\cH^1}^3|\eps\nu+\eps^2\sigma|\\
		&\qquad \leq c(\eps + \eps^2 r_1) + p_3(\eps^2 r_2) \\
		&\qquad \leq c(\eps + \eps^2 (r_1+r_2)),
	\end{aligned}
	\eeq
	where $p_3$ is a cubic polynomial with no zero degree terms. Hence there is a constant $c>0$ such that $p_3(\eps^2 r_2)\leq c \eps^2 r_2$ for all $\eps=\eps(r_2)>0$ small enough, hence the last step is justified.

	 Next, let us define
	 $$M:=\max_{j\in\{1,\dots,m\}} \sup_{\omega\in B_r(\omega_0)}\|V(\cdot,\omega)\|_{W^{1,\infty}(I_j)},$$
	 	where $\|H\|_{W^{1,\infty}(I_j)}$ for a matrix $H$ denotes the maximum of the $W^{1,\infty}(I_j)$-norm of all entries in $H$. We assume w.l.o.g. that $M\geq |\omega_0|+|\delta|$ in order to estimate the entry $\ri \omega$ in $B$ also by $M$ and obtain $\max_{j\in\{1,\dots,m\}} \sup_{\omega\in B_r(\omega_0)}\|B(\cdot,\omega)\|_{W^{1,\infty}(I_j)}\leq M$.

	 Then, for each $j \in \{1,\dots,m\}$
	\begin{align}
		\|(\nu+\varepsilon\sigma)^3I(\cdot,\omega)\|_{W^{1,\infty}(I_j)}&\leq |\nu+\varepsilon\sigma|^3\frac{\max\limits_{z\in \pa B_r(\omega_0)}\|B(\cdot,z)\|_{W^{1,\infty}(I_j)}}{r^{2}(r-|\omega-\omega_0|)} \;\quad \text{for all}\;\; \omega\in B_{r'}(\omega_0) \;\ \text{if} \;\ 0<r'<r<\delta \nonumber\\
		&\leq |\nu+\varepsilon\sigma|^3\frac{2M}{r^3} \quad \text{for all}\quad \omega\in B_{r/2}(\omega_0), \nonumber\\
		&\leq C+q_3(\varepsilon r_1) \leq c(1+\eps r_1),	\label{E:I-est}
	\end{align}
	where $q_3$ is a cubic polynomial with no zero degree terms (such that again $q_3(\eps r_1)\leq c \eps r_1$ for all $\eps=\eps(r_1)$ small enough). In the remaining estimates we directly estimate analogous polynomials of $\eps r_1$ or $\eps^2 r_2$ by the linear terms.

	Due to (A-V) and \eqref{E:I-est}
	$$
	\begin{aligned}
		\|\nu\partial_\omega B(\cdot,\omega_0)( \phi+\eps\psi)+\sigma\partial_\omega B(\cdot,\omega_0)(\varphi_0+\eps\phi+\eps^2\psi)\|_{\cH^1} & \leq c\left(|\sigma|\|\varphi_0\|_{\cH^1}+\left(1+\eps|\sigma|\right)\|\phi\|_{\cH^1}+\eps\left(1+|\sigma|\eps\right)\|\psi\|_{\cH^1}\right)\\
		&\leq c(1+(1+\eps)r_1+(\eps +\eps^2 r_1) r_2)\\
		\|(\nu+\eps\sigma)^2\partial^2_\omega B(\cdot,\omega_0)(\varphi_0+\eps\phi+\eps^2\psi)\|_{\cH^1} & \leq  c(1+\eps r_1+\eps^2r_2)\\
		\|\eps I(\cdot,\omega)(\nu+\eps\sigma)^3(\varphi_0+\eps\phi+\eps^2\psi)\|_{\cH^1} & \leq c\eps|\nu+\eps \sigma|^3\left(\|\varphi_0\|_{\cH^1}+\eps\|\phi\|_{\cH^1}+\eps^2\|\psi\|_{\cH^1}\right)\\
		& \leq c\eps(1+\eps r_1+\eps^2r_2).
	\end{aligned}
	$$
	As a result,
	\beq\label{E:R-est}
	\begin{aligned}
		\|R(\sigma,\psi)\|_{\cH^1}\leq& c_0\left(1+r_1+\eps (r_1+r_2)\right)\leq 2c_0(1+r_1)
	\end{aligned}
	\eeq
	for all $\sigma \in B_{r_1}(0)\subset \C$ and $\psi\in B_{r_2}(0)\subset D(A)\cap \cH^1$. The constant $c_0>0$ is  independent of $\eps, r_1,$ and $r_2$  if $\eps=\eps(r_1,r_2)$ is small enough. The property $G(\sigma,\cdot):B_{r_2}(0)\to B_{r_2}(0)$ is thus satisfied, e.g., with $r_2=r_2(r_1):=2c_0M_L(1+r_1)$ if $\eps>0$ is small enough.

	For the contraction property we first consider  $h$ and use Proposition \ref{L:h-lip} to get
	$$
	\begin{aligned}
	\|h(\cdot,\omega,\varphi_0+\eps \phi + \eps^2 \psi_1)-h(\cdot,\omega,\varphi_0+\eps \phi + \eps^2 \psi_2)\|_{\cH^1} &\leq c^{(1)}_a \eps^2 \sum_{j=1,2}\|\varphi_0+\eps\phi+\eps^2\psi_j\|^2_{\cH^1}\|\psi_1-\psi_2\|_{\cH^1}\\
	& \leq 4c^{(1)}_a\eps^2 \|\varphi_0\|_{\cH^1}^2\|\psi_1-\psi_2\|_{\cH^1}
	\end{aligned}
	$$
	for all $\psi_1,\psi_2\in B_{r_2}(0)$ and $\eps=\eps(r_2)$ small enough. Together with assumption (A-V) and estimate \eqref{E:I-est} this leads to
	$$
	\begin{aligned}
		\|R(\sigma,\psi_1)-R(\sigma,\psi_2)\|_{\cH^1}&\leq c (\eps + (1+r_1)\eps^2) \|\psi_1-\psi_2\|_{\cH^1}\\
		&\leq  c\eps \|\psi_1-\psi_2\|_{\cH^1}
	\end{aligned}
	$$
	for all $\psi_1,\psi_2\in B_{r_2}(0)$, $\sigma \in B_{r_1}(0)$, and for $\eps=\eps(r_1,r_2)$ small enough.

	Given $r_1>0$, we have thus found a unique $\psi=\psi(\sigma)\in B_{r_2}(0)\subset D(A)\cap\cH^1$ for any $\sigma\in B_{r_1}(0),$ where  $r_2=2c_0M_L(1+r_1)$ and $\eps>0$ is small enough.

	We proceed with the second step of the nested argument and solve \eqref{3_sys} for $\sigma$ after substituting $\psi=\psi(\sigma)$. For this we need the Lipschitz continuity of $\sigma\mapsto \psi(\sigma)$ with respect to the $\cH^1$ norm. From $\psi(\sigma)=G(\sigma,\psi(\sigma))$ and Lemma \ref{L:Lk-inv-smooth} we get for $\sigma_1,\sigma_2\in B_{r_1}(0)$
	$$\|\psi(\sigma_1)-\psi(\sigma_2)\|_{\cH^1}\leq M_L \|R(\sigma_1,\psi_1)-R(\sigma_2,\psi_2)\|_{\cH^1},$$
	where $\psi_j:=\psi(\sigma_j), j=1,2$. We denote also $\omega_j:=\omega_0+\eps \nu + \eps^2\sigma_j,$ $u_j:=\eps^{1/2}\varphi_0+\eps^{3/2}\phi+\eps^{5/2}\psi_j$.

	Analogously to \cite{dohnal2021eigenvalue} (see page 15), using \eqref{E:I-est}, one estimates
	\beq\label{E:Idiff}
	\|(\nu+\eps\sigma_1)^3I(\cdot,\omega_1)-(\nu+\eps\sigma_2)^3I(\cdot,\omega_2)\|_{W^{1,\infty}(I_j)} \leq c\eps^4|\sigma_1-\sigma_2| \qquad \forall j \in \{1,\dots,m\},
	\eeq
	where $c$ has a cubic dependence on $r_1$.

	Using again Proposition \ref{L:h-lip} , we get
	\beq
	\begin{split}\label{E:ha-est}
		\|h(\cdot,\omega_1,\eps^{-1/2}u_1)-h(\cdot,\omega_1,\eps^{-1/2}u_2)\|_{\cH^1} &\leq 4c_a^{(1)}\eps^2\|\varphi_0\|_{\cH^1}^2\|\psi_1-\psi_2\|_{\cH^1},\\
		\|h(\cdot,\omega_1,\eps^{-1/2}u_2)-h(\cdot,\omega_2,\eps^{-1/2}u_2)\|_{\cH^1} &\leq 8c_a^{(2)}\eps^2\|\varphi_0\|_{\cH^1}^3|\sigma_1-\sigma_2|
	\end{split}
	\eeq
	for all $\sigma_{1,2}\in B_{r_1}(0)$ and $\eps>0$ small enough. Hence, with the triangle inequalities
	$$
	\begin{aligned}
		&\|h(\cdot,\omega_1,\varphi_0+\eps\phi+\eps^2\psi_1)-h(\cdot,\omega_2,\varphi_0+\eps\phi+\eps^2\psi_2)\|_{\cH^1}\\
		&\leq \|h(\cdot,\omega_1,\eps^{-1/2}u_1)-h(\cdot,\omega_1,\eps^{-1/2}u_2)\|_{\cH^1}+\|h(\cdot,\omega_1,\eps^{-1/2}u_2)-h(\cdot,\omega_2,\eps^{-1/2}u_2)\|_{\cH^1}
	\end{aligned}
	$$
	and
	$$
	\begin{aligned}
		&\|\sigma_1 \pa_\omega B(\cdot,\omega_0)(\varphi_0+\eps\phi+\eps^2\psi_1)- \sigma_2 \pa_\omega B(\cdot,\omega_0)(\varphi_0+\eps\phi+\eps^2\psi_2)\|_{\cH^1} \\
		&\qquad \leq \|\sigma_1 \pa_\omega B(\cdot,\omega_0)(\varphi_0+\eps\phi+\eps^2\psi_1)- \sigma_1 \pa_\omega B(\cdot,\omega_0)(\varphi_0+\eps\phi+\eps^2\psi_2)\|_{\cH^1}\\
		&\qquad +\|(\sigma_1 -\sigma_2)\pa_\omega B(\cdot,\omega_0)(\varphi_0+\eps\phi+\eps^2\psi_2)\|_{\cH^1},
	\end{aligned}
	$$
	and similarly for the other terms in $R$, one obtains
	$$
	\begin{aligned}
		\|R(\sigma_1,\psi_1)-R(\sigma_2,\psi_2)\|_{\cH^1}&\leq c\left(\eps\|\varphi_0\|_{\cH^1}^2\|\psi_1-\psi_2\|_{\cH^1}  +\eps L_a \|\varphi_0\|_{\cH^1}^3|\sigma_1-\sigma_2|\right.\\
		&\quad \left. +\|\varphi_0\|_{\cH^1}|\sigma_1-\sigma_2| + \eps^2\|\psi_1-\psi_2\|_{\cH^1}+ \eps^3|\sigma_1-\sigma_2|\right).
	\end{aligned}
	$$
	This clearly leads to
	\beq\label{E:psi-Lip}
	\|\psi_1-\psi_2\|_{\cH^1}\leq c\|\varphi_0\|_{\cH^1}|\sigma_1-\sigma_2|
	\eeq
	for all $\sigma_{1,2}\in B_{r_1}(0)$ and $\eps>0$ small enough with a modified $c$. Equation \eqref{E:psi-Lip} is the Lipschitz continuity of $\sigma \mapsto \psi(\sigma)$ in $\cH^1$.

	Equation  \eqref{3_sys} is equivalent to
	$$\sigma = S(\sigma),$$
	where
	$$\begin{aligned}
		S(\sigma)&:=c_B\left[\Big\langle\left(\nu\partial_\omega  B(\cdot,\omega_0)\phi+\frac{\nu^2}2\partial_\omega^2B(\cdot,\omega_0)\varphi_0\right),\varphi_0^*\Big\rangle + \varepsilon^{-\frac{5}{2}}\langle v(\cdot,\nu,\sigma,\phi,\psi),\varphi_0^*\rangle\right.\\
		&\quad \left. + \varepsilon^{-\frac{5}{2}}\langle h(\cdot,\omega,u)-\varepsilon^{\frac{3}{2}}h\left(\cdot, \omega_0, \varphi_0\right),\varphi_0^*\rangle \right],\\
		c_B&:=-\langle\partial_\omega B(\cdot,\omega_0)\varphi_0,\varphi_0^*\rangle^{-1}.
	\end{aligned}
	$$
	Next, we find $r_1>0$ such that $S:B_{r_1}(0)\to B_{r_1}(0)$. Using (A-V) and \eqref{E:I-est}, we have for $\eps$ small enough
	$$
	\begin{aligned}
		\|v(\cdot,\nu,\sigma,\phi,\psi)\|_{L^2(\R)}&\leq c_1 \eps^{7/2},\\
		\|h(\cdot,\omega,u)-\varepsilon^{\frac{3}{2}}h\left(\cdot, \omega_0, \varphi_0\right)\|_{L^2(\R)}&\leq c_2\eps^{5/2}\|\varphi_0\|_{\cH^1}^2\|\phi+\eps\psi\|_{\cH^1} + c_3\eps^{5/2}(|\nu|+\eps|\sigma|)\|\varphi_0\|_{\cH^1}^3 + c_4 \eps^{7/2}
	\end{aligned}
	$$
	with $c_1,c_2$ dependent on $r_1$ but independent of $\eps$, and $c_3$ independent of $r_1$ and $\eps$ if $\eps=\eps(r_1)>0$ is small enough. Because $\|\psi\|_{\cH^1} \leq r_2=2c_0M_L(1+r_1)$,  this leads to
	$$|S(\sigma)|\leq 2c_B\left[ c_5+c_2\|\varphi_0\|_{\cH^1}^2\|\phi\|_{\cH^1}+c_3\|\varphi_0\|_{\cH^1}^3(|\nu|+\eps r_1)\right]$$
	for all $\eps$ small enough, where  $c_5:=2\left|\Big\langle\left(\nu\partial_\omega  B(\cdot,\omega_0)\phi+\frac{\nu^2}2\partial_\omega^2B(\cdot,\omega_0)\varphi_0\right),\varphi_0^*\Big\rangle\right|$. Hence, we can choose, e.g., $r_1:= 2c_B(c_5+c_2\|\varphi_0\|_{\cH^1}^2\|\phi\|_{\cH^1}+c_3\|\varphi_0\|_{\cH^1}^3|\nu|+1)$.

	Let us now check the contraction property of $S$ on $B_{r_1}(0)\subset \C$. Using \eqref{E:Idiff} and (A-V), one easily estimates
	$$
	\begin{aligned}
		&|\langle v(\cdot,\nu,\sigma_1,\phi,\psi_1)-v(\cdot,\nu,\sigma_2,\phi,\psi_2),\varphi_0^* \rangle|\\
		&\qquad \leq c \left(\eps^{7/2}|\sigma_1-\sigma_2|+\eps^{9/2}|\sigma_1^2-\sigma_2^2|+\eps^{13/2}|\sigma_1^3-\sigma_2^3|+\eps^{7/2}\|\psi_1-\psi_2\|_{\cH^1}\right).
	\end{aligned}
	$$
	From \eqref{E:ha-est} we obtain
	$$|\langle h(\cdot,\omega_1,u_1)-h(\cdot,\omega_2,u_2),\varphi_0^*\rangle| \leq
	c\eps^{7/2}\left(\|\varphi_0\|_{\cH^1}^2\|\psi_1-\psi_2\|_{\cH^1}+L_a\|\varphi_0\|_{\cH^1}^3|\sigma_1-\sigma_2|\right)
	$$
	with the above definitions of $\omega_1, \omega_2$. Together with \eqref{E:psi-Lip} the last two inequalities lead to
	$$|S(\sigma_1)-S(\sigma_2)|\leq c \eps |\sigma_1-\sigma_2|$$
	for all $\sigma_1,\sigma_2\in B_{r_1}(0)$ and $\eps$ small enough. The contraction property follows for $\eps>0$ small enough and the proof is finished.

	\section{Proof of Theorem \ref{T:PT_sym} (bifurcation under the $\PT$-symmetry)}\label{S:PT-sym}

	We prove next Theorem \ref{T:PT_sym}, i.e. we show that the under the assumption of $\mathcal{P}\mathcal{T}$-symmetry of the coefficients and the realness of $\omega_0$, the above bifurcation argument can be carried out in the $\mathcal{P}\mathcal{T}$-symmetric subspace resulting in real $\omega$ and $\PT$-symmetric $u$.

	As we show now, after multiplication with $-\ri$ the operator $L_k(\cdot,\omega)$ as well as the nonlinearity $h$ commute with the $\PT$ symmetry provided $\hat{\chi}^{(1,3)}$ are $\PT$-symmetric. We have
	\begin{equation}\label{nonlineareigenvalueproblem}
		\tilde{L}_k(\cdot,\omega)u=\tilde{h}(\cdot, \omega, u(\cdot))
	\end{equation}
	for $u=\left(u_1,u_2,u_3\right)^\trans$, where $\tilde{L}_k(x,\omega):=-\ri L_k(x,\omega)$ and $\tilde{h}(x, \omega, u):=-\ri h(x, \omega, u)$. For reference, we recall
	\beq\label{r_operator} \begin{split}\tilde{L}_k(x,\omega)&=\begin{pmatrix}
			0\ & 0 \ &k\\
			0\ & 0 \ &\ri\partial_{x} \\
			-k \ & -\mathrm{i}\partial_x \ & 0
		\end{pmatrix}-\begin{pmatrix}
			V(x, \omega)\ & 0 \ & 0\\
			0\ & V(x, \omega)\ &0 \\
			0 \ & 0 \ & \omega
		\end{pmatrix},\quad V(x,\omega)=-\omega\eps_0\mu_0\left(1+\hat{\chi}^{(1)}(x,\omega)\right),\\
		\tilde{h}(x, \omega, u)&=-\omega\epsilon_0\mu_0^3\hat{\chi}^{(3)}(x,\omega)\left(2|u_E|^2u_E+(u_E\cdot u_E)\overline{u_E}\right)\quad \text{with}\quad u_E=\left(u_1, u_2, 0\right)^\trans.
	\end{split}\eeq
	\blem\label{Lem_PT}
	Let $\omega \in \R$. If the functions $\hat{\chi}^{(1)}(x, \omega)$ and $\hat{\chi}^{(3)}(x, \omega)$ are $\PT$-symmetric, i.e.
	$$\hat{\chi}^{(1)}(x, \omega) = \overline{\hat{\chi}^{(1)}(-x, \omega)}, \ \hat{\chi}^{(3)}(x, \omega) = \overline{\hat{\chi}^{(3)}(-x, \omega)} \ \forall x\in \R,$$
	then the operator $ \tilde{L}_k( \omega)$ and the nonlinearity $\tilde{h}(\cdot, \omega, u)$ commute with the $\PT$-symmetry, i.e.
	\[	\B \tilde{L}_k(x,\omega)=\tilde{L}_k(x,\omega)\B ,\quad \B \tilde{h}(x, \omega, u(x))=\tilde{h}(x, \omega, \B u(x))\quad \text{with}\quad \B u(x):=\overline{u(-x)}.\]
	\elem
	\begin{proof}
		Let $u=(u_1, u_2, u_3)^T\in D(A)$. Due to the symmetry of $V(x, \omega)$ we get
		\begin{equation*}
			\begin{split}
				\B \tilde{L}_k(x,\omega)u&=\begin{pmatrix}
					k\overline{u_3(-x)}-\overline{V(-x, \omega)}\ \overline{u_1(-x)}\\
					-\mathrm{i}(\partial_{x}\overline{u_3})(-x)-\overline{V(-x, \omega)}\ \overline{u_2(-x)}\\
					-k\overline{u_1(-x)}+\ri(\partial_{x}\overline{u_2})(-x)-\omega\overline{u_3(-x)}
				\end{pmatrix}
				=\begin{pmatrix}
					k\overline{u_3(-x)}-V(x, \omega)\ \overline{u_1(-x)}\\
					\mathrm{i}\partial_{x}(\overline{u_3(-x)})-V(x, \omega)\ \overline{u_2(-x)}\\
					-k\overline{u_1(-x)}-\ri\partial_{x}(\overline{u_2(-x)})-\omega\overline{u_3(-x)}
				\end{pmatrix}\\
				&=\tilde{L}_k(x,\omega)\B u.
			\end{split}
		\end{equation*}
		Hence $\B \tilde{L}_k(\omega)=\tilde{L}_k(\omega)\B$.

		Next, we show the $\PT$-symmetry of the nonlinearity $\tilde{h}$. As $\hat{\chi}^{(3)}(x, \omega)$ is $\PT$-symmetric, one, indeed, obtains
		\begin{equation*}
			\begin{split}
				\B (\tilde{h}(\cdot, \omega, u))(x)&=-\omega\eps_0\mu_0^3\overline{\hat{\chi}^{(3)}(-x,\omega)}\ \left(\overline{2|u_E|^2u_E+(u_E\cdot u_E)\overline{u_E}}\right)(-x)\\
				&=-\omega\eps_0\mu_0^3\hat{\chi}^{(3)}(x,\omega)\ \left( 2|u_E(-x)|^2\overline{u_E(-x)}+(\overline{u_E(-x)}\cdot \overline{u_E(-x)})u_E(-x)\right)\\
				&= \tilde{h}(\cdot, \omega, \B u)(x).
			\end{split}
		\end{equation*}
	\end{proof}
	Lemma \ref{Lem_PT} allows us to restrict the bifurcation problem to the $\PT$-symmetric subspace of $D(A)\cap \cH_1$, i.e. to $\{u\in D(A)\cap \cH_1: \B u = u\}$ provided the $\PT$-symmetry of  $\hat{\chi}^{(1,3)}(\cdot, \omega)$ holds for all $\omega$ in a real neighbourhood of $\omega_0$. The bifurcating eigenvalue is then real and the solution $u$ is $\PT$-symmetric, i.e. Theorem  \ref{T:PT_sym} holds. This is proved completely analogously to Proposition 3.1 in \cite{dohnal2021eigenvalue}.

	Note that the proof uses the $\PT$-symmetry of the eigenfunction $\varphi_0$. Under the assumption of algebraic simplicity and realness of $\omega_0$ the eigenfunction can always be selected $\PT$-symmetric. Indeed,
	applying $\B$ to the eigenvalue equation yields $L_k(\omega_0)(\B \varphi_0)=0$ due to the $\PT$-symmetry of $L_k$. As $\omega_0$ is simple, we get $\varphi_0=\B \varphi_0$ (up to a multiplicative constant).


	\section{Numerical Results}\label{S:numerics}

We restrict the numerical computations to two layers ($m=2$) with the interface at $x=0$. We first reduce the nonlinear system \eqref{system} with $\omega \neq 0$ to a system of two equations for $\tilde{u}:=(u_1,u_2)$. For $\omega \neq 0$ the third equation in \eqref{system} namely produces
\beq\label{E:u3}
u_3=-\frac{\ri}{\omega}\left(u_2'-\ri k u_1\right)
\eeq
and the remaining equations become
\begin{align}
	\tilde{u}_2'-\ri \left(k+\frac{\omega V(x)}{k}\right)\tilde{u}_1-\frac{\omega}{k}h_1(x,\omega,\tilde{u})&=0, \quad x\in \R\setminus \{0\}, \label{E:red-syts-a}\\
	\tilde{u}_2''-\ri k\tilde{u}_1 -\omega V(x) \tilde{u}_2+\ri \omega h_2(x,\omega,\tilde{u})&=0, \quad x\in \R\setminus \{0\}.\label{E:red-syts-b}
\end{align}
Due to equation \eqref{E:u3} the interface conditions \eqref{IFCs} become
\beq\label{IFCs-red}
\llbracket \tilde{u}_2 \rrbracket =\llbracket \ri k \tilde{u}_1 - \tilde{u}_2'\rrbracket=0 \ \text{  at  } x=0.
\eeq
As \eqref{E:red-syts-a}-\eqref{IFCs-red} is to be solved for $(\tilde{u}_1,\tilde{u}_2)$ and $\omega$, an additional constraint is needed. We choose the condition
\beq\label{E:normaliz}
\langle u, \varphi_0^*\rangle =\sqrt{\eps}
\eeq
with a given $\eps>0$. By varying $\eps$ in a right neighbourhood of zero we will thus generate a bifurcation curve in accordance with Theorem \ref{mainthm_loc_bifurc}. System \eqref{E:red-syts-a}-\eqref{E:normaliz} is solved via the Newton iteration in a finite difference discretization. Because the function $|\tilde{u}|^2\tilde{u}$ is not complex differentiable, we work in the real variables
$$\tilde{u}_{1,R}, \tilde{u}_{2,R}, \tilde{u}_{1,I}, \tilde{u}_{2,I}, \omega_R, \ \text{and} \ \omega_I,$$
where $\tilde{u}_{j,R}=\Real (\tilde{u}_j), \tilde{u}_{j,I}=\Imag (\tilde{u}_j), j=1,2$, $\omega_R = \Real (\omega)$, and $\omega_I = \Imag (\omega)$. We rewrite \eqref{E:red-syts-a}-\eqref{E:normaliz}  as a system of six real equations with four real interface conditions, namely as the real and imaginary parts of equations \eqref{E:red-syts-a}-\eqref{E:normaliz}.

\subsection{Numerical Finite Difference Method}

The finite difference discretization is implemented on the interval $[-L,L]$ with $L>0$ large enough and with homogenous Dirichlet boundary conditions at $x=\pm L$, which are suitable as we search for localized solutions.

Next, we explain that while equation \eqref{E:red-syts-b} is to be solved at all grid points excluding $x=0$, equation \eqref{E:red-syts-a} has to be solved also in the limit $x\to 0+$ and $x\to 0 -$. The reason is the condition $\nabla \cdot D =0$. The divergence condition
$$
0=\nabla \cdot D =-\frac{1}{\omega}\left[\partial_x\left(V\tilde{u}_1-\ri h_1(x,\omega,\tilde{u})\right) +\ri k \left(V\tilde{u}_1-\ri h_2(x,\omega,\tilde{u})\right)\right]
$$
is satisfied on $\R\setminus \{0\}$ by solutions of \eqref{E:red-syts-a}, \eqref{E:red-syts-b} because these equations imply
$$V\tilde{u}_1-\ri h_1 = \frac{\ri k}{\omega}\left(\ri k \tilde{u}_1-\tilde{u}_2'\right), \quad V\tilde{u}_2-\ri h_2=\frac{1}{\omega}\left(\tilde{u}_2''-\ri k \tilde{u}_1\right).$$
To get $\nabla \cdot D =0$ distributionally on $\R$ (which is the correct formulation of the divergence condition for $u\in D(A)$), we need to satisfy also $\llbracket D_1\rrbracket=0,$ i.e. $\llbracket V\tilde{u}_1-\ri h_1\rrbracket=0$. This follows automatically from the limits $x\to 0\pm$ of \eqref{E:red-syts-a} and the second interface condition in \eqref{IFCs-red}. After discretization it means that we need to solve \eqref{E:red-syts-a} also in the limits $x\to 0-$ and $x\to 0+$.

Choosing $\Delta x :=\frac{2L}{N+1}$ with $N\in 2\N +1$, we have the $N$ grid points $-\frac{N-1}{2}\Delta x,-\frac{N-2}{2}\Delta x,\dots -\Delta x, 0, \Delta x, \dots, \frac{N-1}{2}\Delta x$. We denote $x_j:=-\frac{N-1}{2}\Delta x+(j-1)\Delta x$, $j =1,\dots, N$. Let $j_*:=\frac{N+1}{2}$ be the index of the interface grid point, i.e. $x_{j_*}=0$.

Because $\tilde{u}_1$ is discontinuous at $x=0$, the degrees of freedom of the discretized problem must include approximations of $\tilde{u}_1(0-)$ and $\tilde{u}_1(0+)$. Hence, we have the (real) degrees of freedom
$$U^{R/I}_{1,j}, U^{R/I}_{2,j}, \quad x\in \{1,\dots, j_*-1, j_*+1, \dots, N\}$$
approximating $\tilde{u}_{1,R/I}$ at $x_j, j \neq j_*$ and
$$U^{R/I}_{1,j_*,-}, U^{R/I}_{1,j_*,+}, U^{R/I}_{2,j_*},$$
where $U^{R/I}_{1,j_*,\pm}$ approximate $\tilde{u}_{1,R/I}(0\pm)$ and $ U^{R/I}_{2,j_*}$ approximate $\tilde{u}_{2,R/I}(0)$. Finally, there are the two degrees of freedom $\omega_R$ and $\omega_I$ approximating $\Real (\omega)$ and $\Imag(\omega)$.

These are altogether $4N+4$ real degrees of freedom. Solving \eqref{E:red-syts-a} in the finite difference discretization at $x_j, j\neq j_*$ as well as at $x_1\to 0\pm$ and \eqref{E:red-syts-b} at $x_j, j\neq j_*$, we get $2N+2$ real equations from \eqref{E:red-syts-a} and $2N-2$ equations from \eqref{E:red-syts-b}. Condition \eqref{E:normaliz} produces  two real equations. After using the interface condition $\llbracket \ri k \tilde{u}_1 - \tilde{u}_2'\rrbracket=0$, two degrees of freedom, e.g. $U^R_{1,j_*,+}$ and $U^I_{1,j_*,+}$ are eliminated and we get $4N+2$ real degrees of freedom and $4N+2$ real equations.

At $x_j,j\neq j_*$, we use the centered finite difference stencil of second order for $\partial_x$ as well as for $\pa_x^2$. At $x=0-$ and $x=0+$ we use the one-sided second order stencils. The integral in the normalization condition \eqref{E:normaliz} is approximated using the trapezoidal rule.

\subsection{Bifurcation Examples}

We present here two numerical examples of bifurcation in the case of 2 layers. One of the examples is $\PT$-symmetric and the linear frequency $\omega_0$ (hence also the bifurcating frequency $\omega$) is real. The other example is non-symmetric and the frequency is complex.

The nonlinear equations, discretized using the above finite difference scheme, are solved using the Newton iteration. The initial guess for $\omega = \omega_0+\eps\nu$ with $0<\eps\ll 1$ is provided by the asymptotic approximation $\eps^{1/2}\varphi_0$, see Theorem \ref{mainthm_loc_bifurc}. The bifurcation curve is obtained by a simple parameter continuation in $\eps$.

\subsubsection{$\PT$-symmetric example}
We choose $k=1$ and a case of a $\PT$-symmetric Drude material that is homogenous on each layer $x<0$ and $x>0$. In detail,
\beq\label{E:Vpm-ex}
V_-(\omega)=-\omega \left(1-\frac{2\pi \omega_p^2}{\omega^2 +\ri \gamma \omega}\right), \quad V_+ = \overline{V_-}
\eeq
with $\omega_p = 0.5$ and $\gamma =0.7$. The resulting function $V$ is plotted in Figure \ref{F:V-bif-diag-PT} (a).

We find a real eigenvalue $\omega_0$ by determining real elements of $N^{(k)}$. In the case of \eqref{E:Vpm-ex} condition \eqref{E:ev.cond-1D} can, in fact, be solved explicitly. First, we note that \eqref{E:ev.cond-1D} is equivalent to $k^2(W_+(\omega)+W_-(\omega))=W_+(\omega)W_-(\omega)$. This equation, with $W\pm =-\omega V_\pm(\omega)$ and $V_\pm$ given in \eqref{E:Vpm-ex}, reduces to a quadratic equation in $\mu:=\omega^2$. The two roots are
$$\mu_{1,2}:=k^2+2\pi \omega_p^2 -\frac{\gamma^2}{2}\pm \frac{1}{2}\left(\gamma^2(\gamma^2-4(2\pi \omega_p^2-k^2))+4k^4\right)^{1/2}$$
and we get $\mu_1=\omega_0^2$ with $\omega_0 \approx 1.791$. The corresponding eigenfunction $\varphi_0$ is plotted in Figure \ref{F:u-PT}.

In the nonlinearity $h$ we choose $\epsilon_0\mu_0^3\chi^{(3)}\equiv 1,$ i.e. $\chi^{(3)}$ is real and $x-$ as well as $\omega-$independent. In particular, it is also $\PT$ symmetric.

The discretization parameters are $L=120$ and $N=17999$ (resulting in $\Delta x \approx 0.013$). The numerically approximated value of $\nu$ is $\approx -0.2572$.

In Figure \ref{F:V-bif-diag-PT} (b) we plot the bifurcation diagram showing that the bifurcation parameter (i.e. frequency) $\omega$ indeed stays real. As expected, the bifurcation curve is tangent to the line given by the asymptotic approximation $\omega = \omega_0+\nu \eps$ with $\nu$ from \eqref{E:nuA}. In Figure \ref{F:V-bif-diag-PT} (c) we show that the convergence of the asymptotic approximation error $|\omega-\omega_0 -\eps \nu|$ is indeed approximately quadratic. The solution $u$ (chosen at $\omega\approx 1.7167$) plotted in \ref{F:u-PT} satisfies the $\PT$-symmetry. It is clearly close to the linear solution $\varphi_0$ but not identical.
\begin{figure}[ht!]
	\centering
	\begin{subfigure}[b]{0.27\linewidth}
		\includegraphics[width=\linewidth]{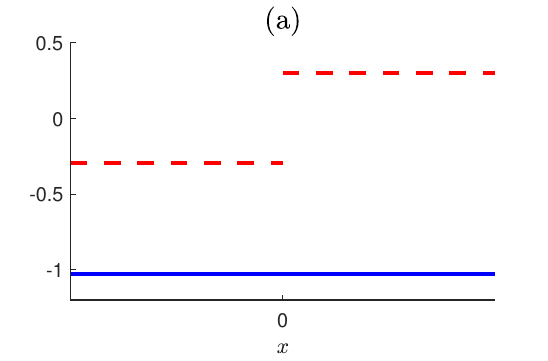}
	\end{subfigure}
	\begin{subfigure}[b]{0.34\linewidth}
	\includegraphics[width=\linewidth]{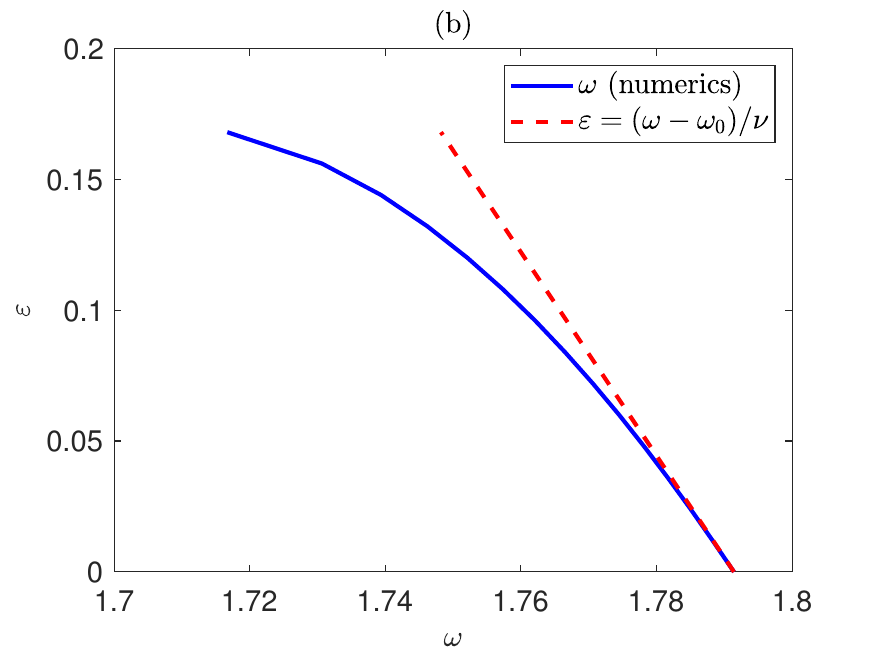}
\end{subfigure}
	\begin{subfigure}[b]{0.34\linewidth}
	\includegraphics[width=\linewidth]{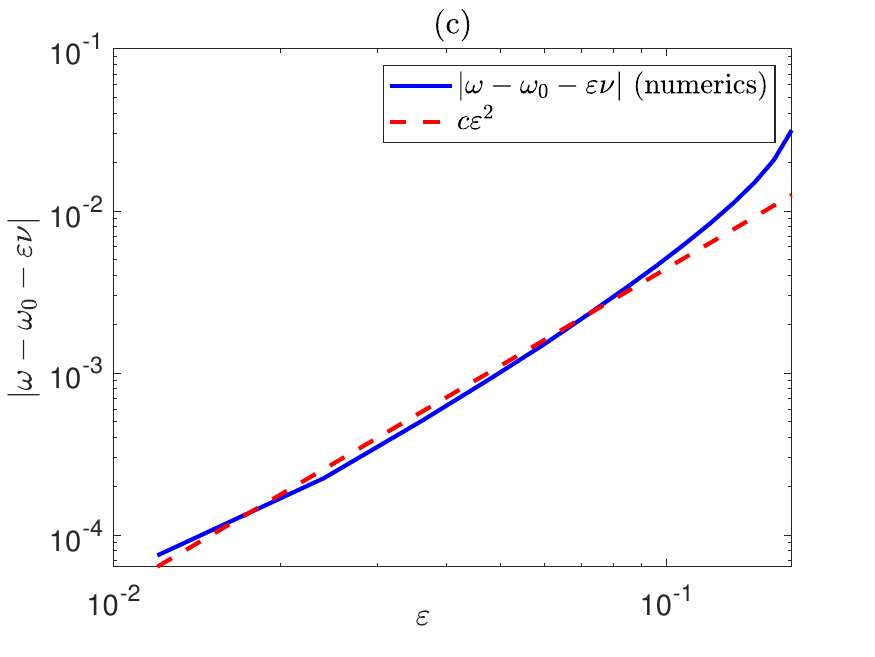}
\end{subfigure}
		\caption{{\footnotesize (a) The graph of the $\PT$-symmetric $V(\cdot,\omega_0)$ given by \eqref{E:Vpm-ex} with $\omega=\omega_0 \approx 1.7914$. (b) The corresponding bifurcation diagram for the bifurcation from the eigenvalue $\omega_0$. (c) Convergence of the approximation error $|\omega-\omega_0-\eps \nu|$. For comparison a curve with a quadratic convergence is plotted.}}
	\label{F:V-bif-diag-PT}
\end{figure}

\begin{figure}[ht!]
	\centering
		\begin{subfigure}[b]{1.05\linewidth}
	\includegraphics[width=\linewidth]{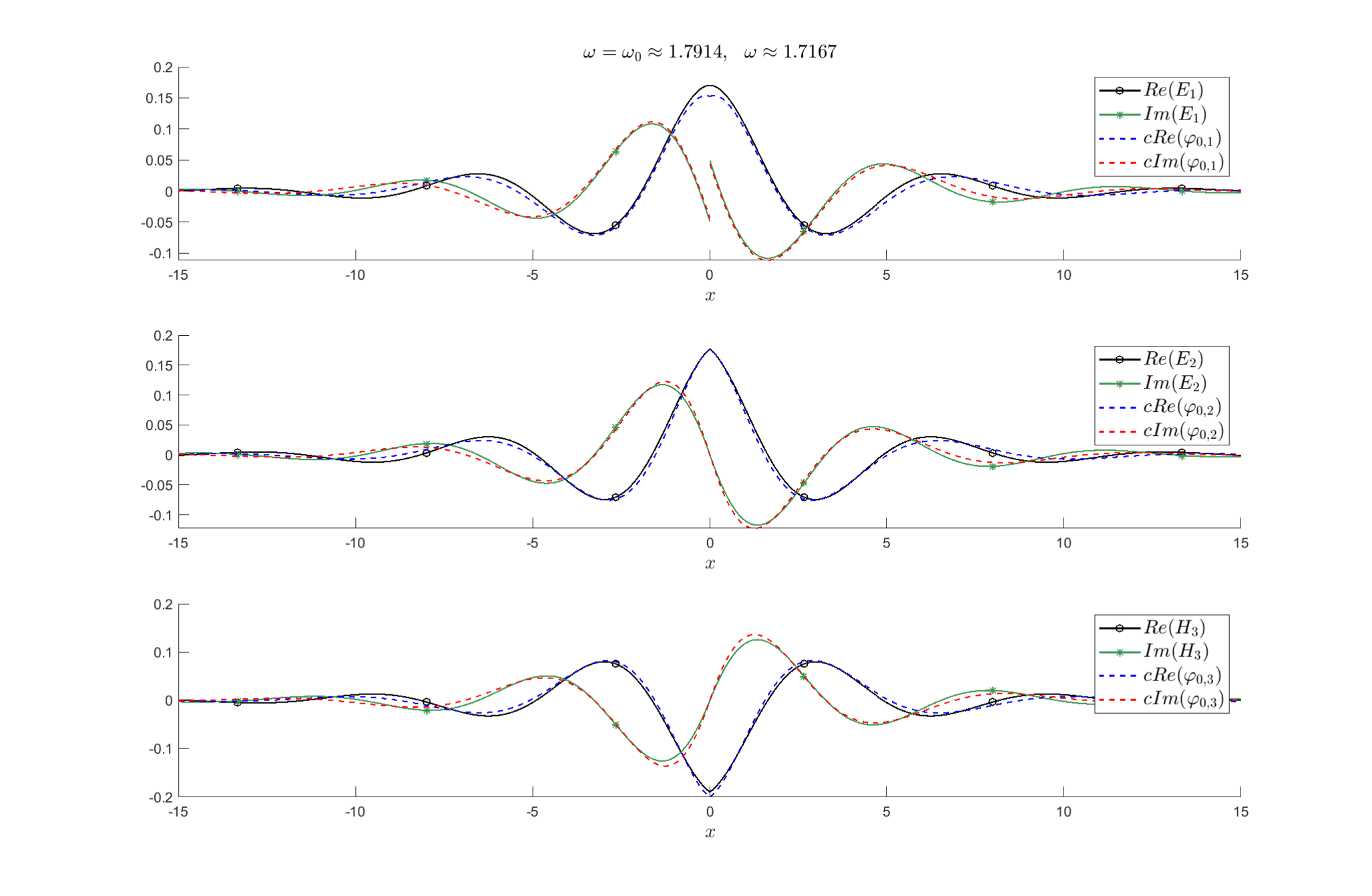}
	\end{subfigure}
	\caption{{\footnotesize The nonlinear solution $u$ at $\omega \approx 1.7167$. Recall that $E_1=u_{1}, E_2=u_{2}, H_3=u_{3}$. The linear eigenfunction $\varphi_0$ (normalized to have a similar amplitude to that of $u$) is plotted for comparison.}}
		\label{F:u-PT}
	\end{figure}

\subsubsection{Non-$\PT$-symmetric example}

We present also one example which is not $\PT$-symmetric, namely
\beq\label{E:Vpm-ex-2}
V_-(\omega)=-\omega \left(1-\frac{2\pi \omega_p^2}{\omega^2 +\ri \gamma \omega}\right), \quad V_+ = - \omega(1+\eta)
\eeq
with $\omega_p=0.8, \gamma=1$, and $\eta=1$. Because of the lack of $\PT$-symmetry, the linear eigenvalues are not real. Equation \eqref{system} makes sense also for non-real $\omega$. However, as explained in Remark \ref{rem:cplx-om}, the corresponding solution $u$ does not generate a solution of Maxwell's equations. Nevertheless, we compute here the bifurcation from $\omega_0\in \C\setminus \R$.

In the nonlinearity $h$ we choose again $\epsilon_0\mu_0^3\chi^{(3)}\equiv 1.$ The discretization parameters are $L=100$ and $N=11999$. The value of $\nu$ is approximated as $\nu \approx  -0.0336 - \ri 0.0054.$

Solving \eqref{E:ev.cond-1D} numerically, we obtain an eigenvalue $\omega_0 \approx 0.4679-\ri~0.061.$ In Fig. \ref{F:V-bif-diag-nonPT} (a) we plot $V$ given by
\eqref{E:Vpm-ex-2} with $\omega=\omega_0$. In (b) we show the bifurcation diagram and the first order asymptotic approximation $\omega = \omega_0+\nu \eps$. The corresponding asymptotic error is plotted in (c) with an observed approximately quadratic convergence, as predicted.
\begin{figure}[ht!]
	\centering
	\begin{subfigure}[b]{0.27\linewidth}
		\includegraphics[width=\linewidth]{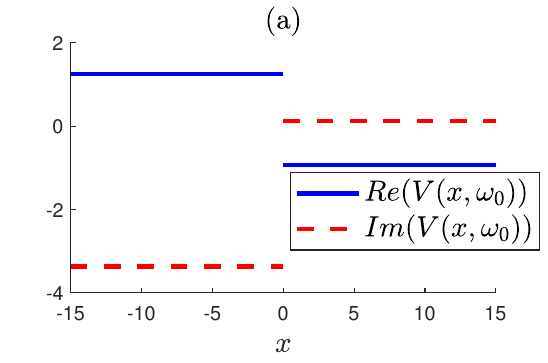}
	\end{subfigure}
	\begin{subfigure}[b]{0.34\linewidth}
		\includegraphics[width=\linewidth]{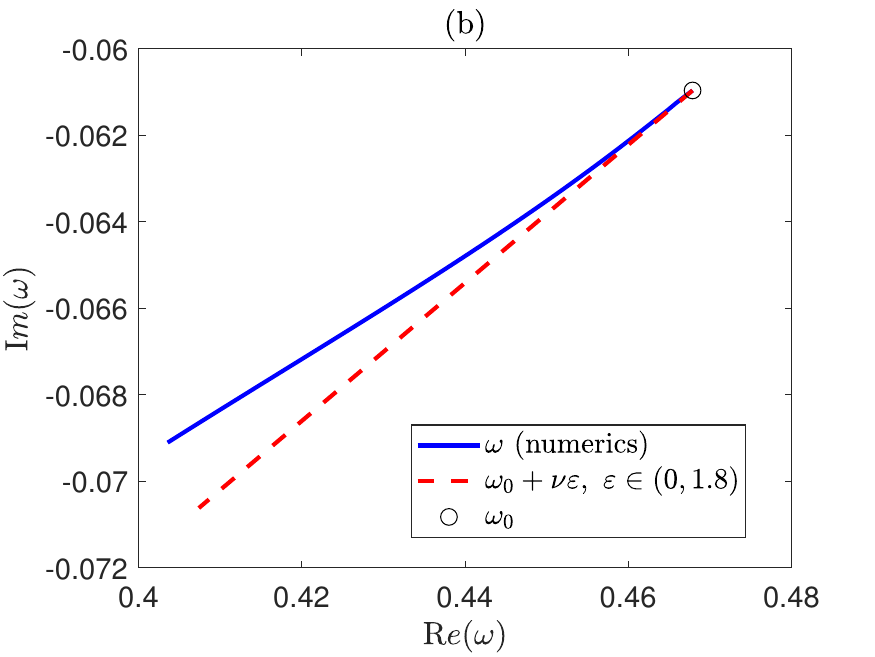}
	\end{subfigure}
	\begin{subfigure}[b]{0.34\linewidth}
		\includegraphics[width=\linewidth]{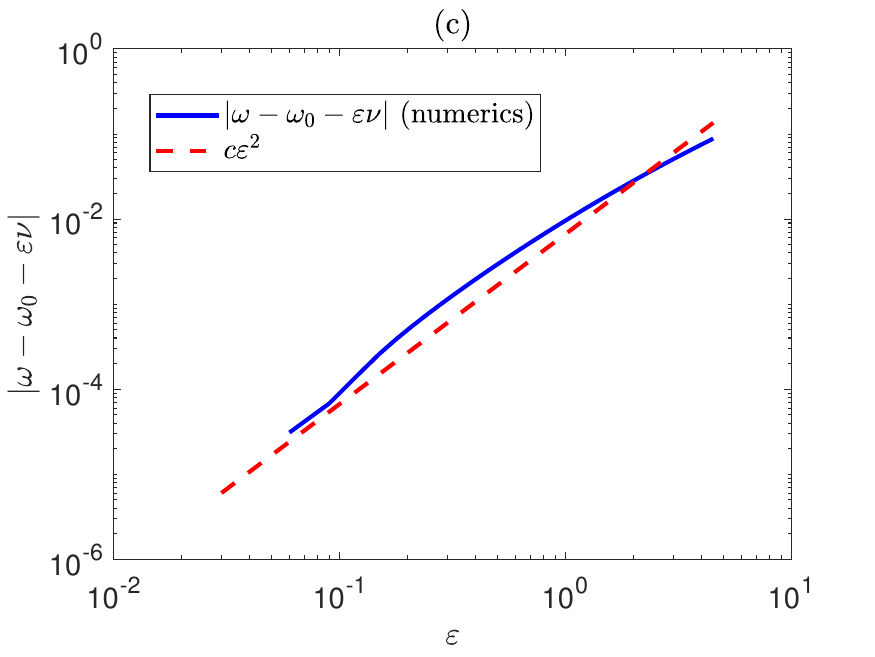}
	\end{subfigure}
	\caption{{\footnotesize (a)  The graph of the $V(\cdot,\omega_0)$ given by \eqref{E:Vpm-ex-2} with $\omega=\omega_0 \approx 0.4679-\ri~0.061$. (b) The corresponding bifurcation diagram for the bifurcation from the eigenvalue $\omega_0$. (c) Convergence of the approximation error $|\omega-\omega_0-\eps \nu|$. For comparison a curve with a quadratic convergence is plotted.}}
	\label{F:V-bif-diag-nonPT}
\end{figure}

The nonlinear solution $u$ at $\omega\approx 0.427-\ri~0.066$ is plotted in (b) together with  the linear eigenfunction $\varphi_0$ normalized to have a similar amplitude to that of $u$.
\begin{figure}[ht!]
	\centering
	\begin{subfigure}[b]{1.05\linewidth}
		\includegraphics[width=\linewidth]{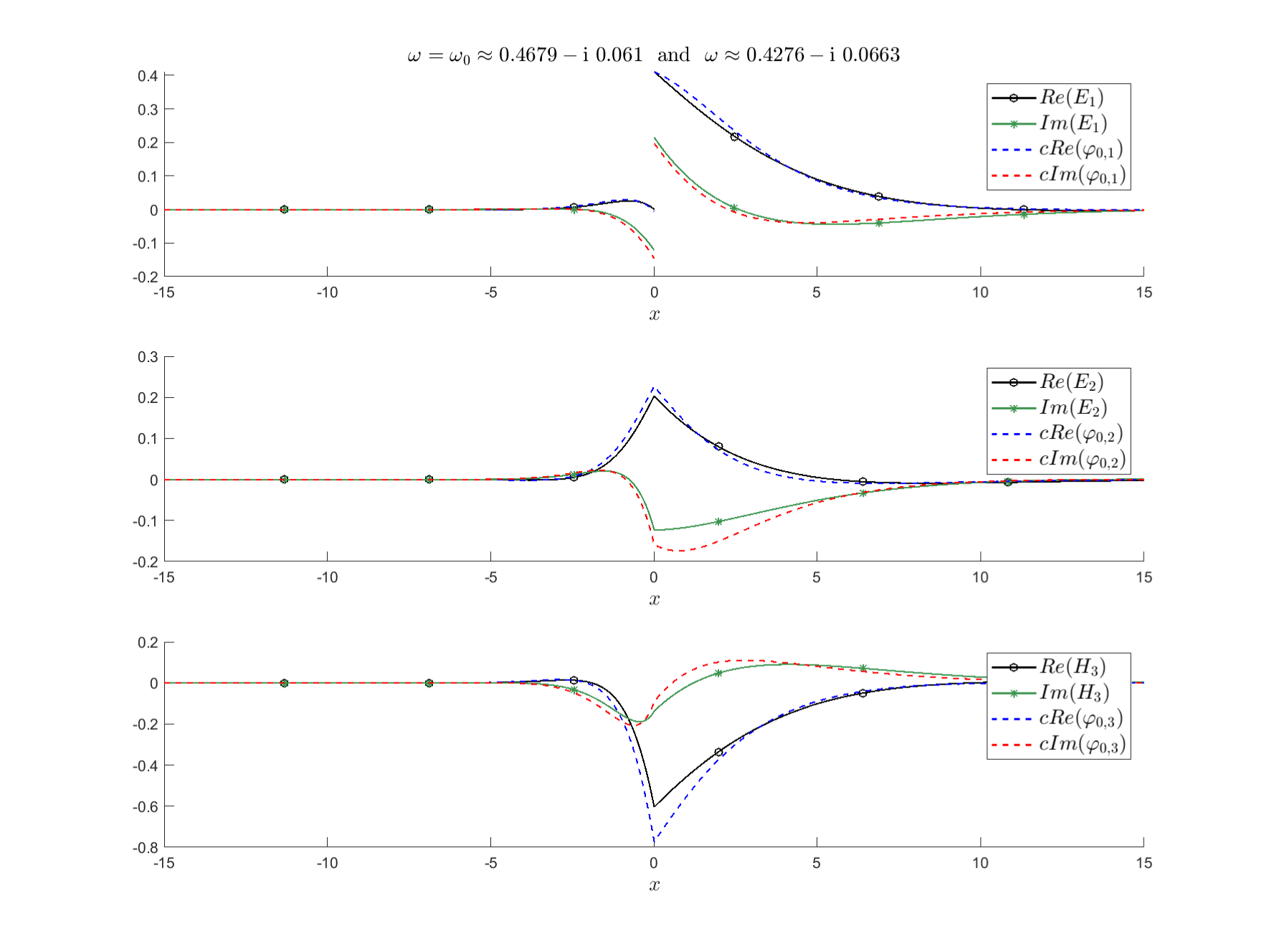}
	\end{subfigure}
	\caption{{\footnotesize The nonlinear solution $u$ at $\omega \approx 0.4276 - \ri 0.066$. Recall that $E_1=u_{1}, E_2=u_{2}, H_3=u_{3}$. The linear eigenfunction $\varphi_0$ (normalized ot have a similar amplitude to that of $u$) is plotted for comparison.}}
	\label{F:u-non-PT}
\end{figure}

Note that in Figure \ref{F:u-non-PT}, both the real and imaginary part of the $E_1$ component are discontinuous, whereas in Figure \ref{F:u-PT} the real part of $E_1$ becomes continuous due to $\PT$-symmetry (real part is even).

	\newpage
	\appendix

	\section{Solvability of $L_k(\omega_0)u=r$ with $\omega_0\in \sigma_p(L_k)$ and $r\in Q_0 L^2(\R)$ for the case of two homogenous layers using the variation of parameters}\label{A:solvable}

	Here we consider the linear case of two homogenous layers of Section \ref{S:2layers}, i.e. $m=2$, $V_1(x,\omega)=V_-(\omega), V_2(x,\omega)=V_+(\omega)$ with the interface at $x=0$. Let $\omega_0 \in \sigma_p(L_k)$ be simple and $\varphi_0\in D(A)$, $\varphi_0^*\in D(A^*)=D(A)$ be corresponding eigenfunctions, i.e. $L_k(\cdot,\omega_0)\varphi_0=0, L_k^*(\cdot,\omega_0)\varphi_0^*=0$. As we know, using the closed range theorem, equation
	$$L_k(x,\omega)u=r \quad \text{ with } \quad \langle r, \varphi_0^*\rangle=0 $$
	has a unique solution in $D(A)\cap \langle \varphi_0^*\rangle^\perp$. Here we want to demonstrate this explicitly using the variation of parameters.

	The solution $u\in D(A)$ is given by \eqref{general_solu}, \eqref{C_plus} and
	\beq\label{E:DR-Cmin}
	\left(\mu_-V_+(\omega_0)+\mu_+V_-(\omega_0)\right)C_-=\frac{1}{2}\left(\mu_+V_-(\omega_0)-\mu_-V_+(\omega_0)\right)\int_{-\infty}^{0}\rho_-^{(2)}(s)e^{\mu_-s}\dd s+\mu_+V_+(\omega_0)\int_{0}^{\infty}\rho_+^{(1)}(s)e^{-\mu_+s}\dd s
	\eeq
	with $\rho_+^{(1)}$ and $\rho_-^{(2)}$ in \eqref{E:rhos}.

	Because $\omega_0\in N^{(k)}$, the left hand side of \eqref{E:DR-Cmin} vanishes, see \eqref{E:ev.cond-1D}. The existence of $u\in D(A)$ follows if the right hand side vanishes too. This is shown below to hold if $r\in Q_0 L^2(\R)$, i.e. if $\langle r,\varphi_0^*\rangle=0.$

	From
	$$L_k^*(\cdot,\omega_0)=A-\overline{B(\cdot,\omega_0)}=\overline{L_{-k}(\cdot,\omega_0)},$$
	we conclude
	$$\varphi_0^*(x;k)=\overline{\varphi_0(x;-k)}.$$
	The eigenfunction $\varphi_0(x;k)$ is given by \eqref{E:psi} with $c_+=-\frac{V_-(\omega_0)}{V_+(\omega_0)}c_-,$ $\mu_\pm=\sqrt{k^2-W_\pm(\omega_0)}$, where $V_+(\omega_0)\mu_-+V_-(\omega_0)\mu_+=0.$ Hence,
	$$
	\varphi_0^*(x;k)=\begin{cases}
		\overline{c_-}e^{\overline{\mu_-}x}\bspm-\ri k\\ \overline{\mu_-} \\ \ri \overline{V_-(\omega)}\espm  &\quad \text{for}\ x<0, \\
		\overline{c_+}e^{-\overline{\mu_+}x}\bspm\ri k\\ \overline{\mu_+}\\ -\ri \overline{V_+(\omega)} \espm  &\quad \text{for}\ x>0.
	\end{cases}
	$$
	The assumption $\langle r, \varphi_0^*\rangle=0 $ becomes
	\beq\label{E:rhs-cond}
	\begin{aligned}
		&\int_{-\infty}^0e^{\mu_-x}\left(\ri k r_1(x)+\mu_-r_2(x)-\ri V_-(\omega_0)r_3(x)\right)\dd x \\
		&-\frac{V_-(\omega_0)}{V_+(\omega_0)}\int_0^\infty e^{-\mu_+ x}\left(-\ri k r_1(x)+\mu_+r_2(x)+\ri V_+(\omega_0)r_3(x)\right)\dd x =0.
	\end{aligned}
	\eeq
	The right hand side of \eqref{E:DR-Cmin} equals
	$$
	\begin{aligned}
		\mu_+V_-(\omega_0)&\int_{-\infty}^0\left(\frac{-kr_1(x)}{V_-(\omega_0)\mu_-}+\ri \frac{r_2(x)}{V_-(\omega_0)}+\frac{r_3(x)}{\mu_-}\right)e^{\mu_- x} \dd x \\
		&+ \mu_+V_+(\omega_0)\int_{0}^\infty\left(\frac{kr_1(x)}{V_+(\omega_0)\mu_+}+\ri \frac{r_2(x)}{V_+(\omega_0)}-\frac{r_3(x)}{\mu_+}\right)e^{-\mu_+ x} \dd x
	\end{aligned}
	$$
	and using \eqref{E:rhs-cond}, it simplifies to
	$$\left(\frac{\mu_+V_-(\omega_0)}{\mu_-V_+(\omega_0)}+1\right)\int_{0}^\infty\left(\frac{kr_1(x)}{V_+(\omega_0)\mu_+}+\ri \frac{r_2(x)}{V_+(\omega_0)}-\frac{r_3(x)}{\mu_+}\right)e^{-\mu_+ x} \dd x,$$
	which is indeed zero since $V_+(\omega_0)\mu_-+V_-(\omega_0)\mu_+=0.$
	\section{Lipschitz continuity of the nonlinearity $h$}\label{loc_nonlinearity}
	In the following we use the notation $\lesssim$ to denote the inequality up to a multiplicative constant, which is independent of the variables and functions appearing on the right hand side of the estimate.

	\begin{prop}\label{L:h-lip}
		Let $\delta >0$ be such that $B_\delta(\omega_0)\subset \Omega$ and such that (A-Na) holds. Then the nonlinearity $h$ given by
		$$
			h(x,\omega, u) =
			-\ri\epsilon_0\mu_0^3\omega\hat{\chi}^{(3)}(x,\omega)\left(2|\tilde{u}|^2(\tilde{u}, 0)^\trans+(\tilde{u}\cdot \tilde{u})\left(\overline{\tilde{u}}, 0\right)^\trans\right), \quad \text{where}\quad u=(\tilde{u}, u_3)^\trans, \ \tilde{u}=(u_1, u_2),
		$$
satisfies $ h(\cdot, \omega,u)\in \cH^1$ for all $u\in D(A)\cap \cH^1$ and $\omega\in B_\delta(\omega_0)$ as well as the Lipschitz properties
			\begin{equation}\label{Lip_est_loc_u}
				\left\|h(\cdot,\omega, \psi)-h(\cdot, \omega, \varphi)\right\|_{\cH^1}\leq c_a^{(1)} \left(\left\|\psi\right\|^2_{\cH^1}+\left\|\varphi\right\|^2_{\cH^1}\right)	\left\|\psi-\varphi\right\|_{\cH^1},
			\end{equation}
			for all $\omega\in B_\delta(\omega_0)$  and $\varphi$, $\psi\in D(A)\cap \cH^1$, where
			$$c_a^{(1)}:=\frac{9}{2}\epsilon_0\mu_0^3(|\omega_0|+\delta)\left(\max_{j\in\{1,\dots,m\}}\|\hat{\chi}^{(3)}(\cdot,\omega_0)\|_{W^{1,\infty}(I_j)}+L_a\delta\right),$$
			and
			\begin{equation}\label{Lip_est_loc_ome}
				\left\|h(\cdot, \omega_1, u)-h(\cdot, \omega_2, u)\right\|_{\cH^1} \leq c_a^{(2)} \left\|u\right\|^3_{\cH^1} \left|\omega_1-\omega_2\right|
			\end{equation}
			for all $\omega_1, \omega_2\in B_{\delta}(\omega_0)$ and $u\in D(A)\cap \cH^1$, where
			$$c_a^{(2)}:=3\epsilon_0\mu_0^3\left(L_a(|\omega_0|+2\delta)+ \max_{j\in\{1,\dots,m\}}\|\hat{\chi}^{(3)}(\cdot,\omega_0)\|_{W^{1,\infty}(I_j)}\right).$$
	\end{prop}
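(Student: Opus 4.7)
The plan is to exploit three structural facts: each layer $I_j$ is a one-dimensional interval, so $H^1(I_j)\hookrightarrow L^\infty(I_j)$ continuously with an embedding constant that can be taken uniform in $j$ (via extension to $\R$); consequently $H^1(I_j)$ is a Banach algebra with $\|fg\|_{H^1(I_j)}\lesssim \|f\|_{H^1(I_j)}\|g\|_{H^1(I_j)}$; and $W^{1,\infty}(I_j)$-by-$H^1(I_j)$ pointwise multiplication preserves $H^1(I_j)$ by the product rule. Writing $F(\tilde u):=2|\tilde u|^2\tilde u+(\tilde u\cdot\tilde u)\overline{\tilde u}$, the nonlinearity on each layer takes the form
\[
h(\cdot,\omega,u)\big|_{I_j}=-\ri\eps_0\mu_0^3\,\hat\chi^{(3)}(\cdot,\omega)\,\bigl(F(\tilde u)|_{I_j},\,0\bigr)^\trans.
\]

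First, I would verify $h(\cdot,\omega,u)\in\cH^1$ for $u\in D(A)\cap\cH^1$. Since each component of $\tilde u|_{I_j}$ and of its complex conjugate lies in $H^1(I_j)$, two applications of the algebra property yield $F(\tilde u)|_{I_j}\in H^1(I_j,\C^2)$ together with the cubic bound $\|F(\tilde u)\|_{H^1(I_j)}\lesssim \|\tilde u\|_{H^1(I_j)}^3$. Multiplying by $\hat\chi^{(3)}(\cdot,\omega)\in W^{1,\infty}(I_j)$, which is uniformly bounded in $\omega\in B_\delta(\omega_0)$ thanks to (A-Na), and summing over $j$ gives $h(\cdot,\omega,u)\in\cH^1$ with cubic control in $\|u\|_{\cH^1}$.

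For \eqref{Lip_est_loc_u}, I would rewrite $F(\tilde\psi)-F(\tilde\varphi)$ by means of telescoping identities such as
\[
|\tilde\psi|^2\tilde\psi-|\tilde\varphi|^2\tilde\varphi=|\tilde\psi|^2(\tilde\psi-\tilde\varphi)+\bigl(\tilde\psi\cdot\overline{\tilde\psi-\tilde\varphi}\bigr)\tilde\varphi+\bigl((\tilde\psi-\tilde\varphi)\cdot\overline{\tilde\varphi}\bigr)\tilde\varphi,
\]
and the analogous rewriting of $(\tilde\psi\cdot\tilde\psi)\overline{\tilde\psi}-(\tilde\varphi\cdot\tilde\varphi)\overline{\tilde\varphi}$. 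This expresses $F(\tilde\psi)-F(\tilde\varphi)$ as a finite sum of trilinear products, each containing exactly one factor from $\{\tilde\psi-\tilde\varphi,\overline{\tilde\psi-\tilde\varphi}\}$ and two factors from $\{\tilde\psi,\tilde\varphi,\overline{\tilde\psi},\overline{\tilde\varphi}\}$. Applying the $H^1(I_j)$ algebra estimate componentwise and using $\sum_j a_j^2 c_j\le (\sum_j a_j)^2\sum_j c_j$ to pass to the $\cH^1$ norm yields
\[
\|F(\tilde\psi)-F(\tilde\varphi)\|_{\cH^1}\lesssim\bigl(\|\psi\|_{\cH^1}^2+\|\varphi\|_{\cH^1}^2\bigr)\|\psi-\varphi\|_{\cH^1},
\]
after which multiplication by the uniform $W^{1,\infty}$ bound on $\hat\chi^{(3)}(\cdot,\omega)$ produces \eqref{Lip_est_loc_u}.

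The estimate \eqref{Lip_est_loc_ome} is then immediate from
\[
h(\cdot,\omega_1,u)-h(\cdot,\omega_2,u)=-\ri\eps_0\mu_0^3\bigl(\hat\chi^{(3)}(\cdot,\omega_1)-\hat\chi^{(3)}(\cdot,\omega_2)\bigr)\bigl(F(\tilde u),0\bigr)^\trans,
\]
assumption (A-Na), which yields the Lipschitz bound $\|\hat\chi^{(3)}(\cdot,\omega_1)-\hat\chi^{(3)}(\cdot,\omega_2)\|_{W^{1,\infty}(I_j)}\le L_a|\omega_1-\omega_2|$, and the already established cubic bound on $F(\tilde u)$ in $H^1(I_j)$, followed by summation in $j$. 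The only slightly delicate point I anticipate is the uniformity of the $H^1$-algebra constant and of the $H^1\hookrightarrow L^\infty$ embedding constant across the possibly unbounded outer layers $I_1$ and $I_m$; this is, however, standard, since these constants can be chosen equal to those for $\R$ via a bounded extension operator, so no genuine obstacle arises and the argument is essentially bookkeeping around the algebra structure.
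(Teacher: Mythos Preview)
Your proposal is correct and follows essentially the same route as the paper's proof: both rest on the $H^1$-algebra property on each layer, a telescoping of $F(\tilde\psi)-F(\tilde\varphi)$ into trilinear terms, and assumption (A-Na) for the Lipschitz dependence on $\omega$. Your version is slightly more explicit about the layer-by-layer bookkeeping and the uniformity of the algebra/embedding constants across the (finitely many) intervals, points the paper simply absorbs into the phrase ``algebra property of $\cH^1$''.
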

	\begin{proof} Due to the algebra property of $\cH^1$ it follows that
		\begin{equation*}
			\begin{split}
				\left\|h(\cdot,\omega, u)\right\|_{\cH^1}&\lesssim \max_{j\in\{1,\dots,m\}}\left\|\omega\hat{\chi}^{(3)}(\cdot,\omega)\right\|_{W^{1,\infty}(I_j)}\left\|2|\tilde{u}|^2\tilde{u}+(\tilde{u}\cdot \tilde{u})\overline{\tilde{u}}\right\|_{\cH^1}\\
				&\lesssim \|u\|^3_{\cH^1}
			\end{split}
		\end{equation*}
		Hence, $h(\cdot, \omega,u)\in \cH^1$ for all $u\in D(A)\cap\cH^1$ and $\omega\in B_\delta(\omega_0)$. For $\varphi$, $\psi\in D(A)\cap\cH^1$ we write $\varphi=\left(\tilde{\varphi}, \varphi_3\right)^\trans$ with $\tilde{\varphi}=(\varphi_1, \varphi_2)^\trans$ and $\psi=(\tilde{\psi}, \psi_3)^\trans$ with $\tilde{\psi}=\left(\psi_1, \psi_2\right)^\trans$ and obtain \eqref{Lip_est_loc_u} as follows:
		\begin{equation*}
			\begin{split}
				&\left\|h(\cdot,\omega, \psi)-h(\cdot,\omega, \varphi)\right\|_{\cH^1}\\
				\leq &~\epsilon_0\mu_0^3\max_{j\in\{1,\dots,m\}}\left\|\omega\hat{\chi}^{(3)}(\cdot,\omega)\right\|_{W^{1,\infty}(I_j)}\left(2\left\||\tilde{\psi}|^2\tilde{\psi}-|\tilde{\varphi}|^2\tilde{\psi}\right\|_{\cH^1}+\left\|(\tilde{\psi}\cdot\tilde{\psi})\overline{\tilde{\psi}}-(\tilde{\varphi}\cdot\tilde{\varphi})\overline{\tilde{\varphi}}\right\|_{\cH^1}\right)\\
				\leq&~\epsilon_0\mu_0^3(|\omega_0|+\delta)\left(\max_{j\in\{1,\dots,m\}}\|\hat{\chi}^{(3)}(\cdot,\omega_0)\|_{W^{1,\infty}(I_j)}+L_a\delta\right)\left(2\|\tilde{\psi}\|^2_{\cH^1}\left\|\tilde{\psi}-\tilde{\varphi}\right\|_{\cH^1}+2\left\|\tilde{\varphi}\right\|_{\cH^1}\left\||\tilde{\psi}|^2-|\tilde{\varphi}|^2\right\|_{\cH^1}+ \right.\\
				& \left. \qquad  \left\|\tilde{\psi}\cdot \tilde{\psi}-\tilde{\varphi}\cdot \tilde{\varphi}\right\|_{\cH^1}\left\|\tilde{\varphi}\right\|_{\cH^1}+\left\|\tilde{\psi}-\tilde{\varphi}\right\|_{\cH^1}\|\tilde{\psi}\|^2_{\cH^1}\right)\\
				\leq&~c_a^{(1)}\left(\left\|\psi\right\|^2_{\cH^1}+\left\|\varphi\right\|^2_{\cH^1}\right)\left\|\psi-\varphi\right\|_{\cH^1}
			\end{split}
		\end{equation*}
		because $|\omega|\leq |\omega_0|+\delta$ and $\|\hat{\chi}^{(3)}(\cdot,\omega)\|_{W^{1,\infty}(I_j)} \leq \|\hat{\chi}^{(3)}(\cdot,\omega_0)\|_{W^{1,\infty}(I_j)}+L_a\delta$ and using
		$$
		\begin{aligned}
		\left\|\tilde{\psi}\cdot \tilde{\psi}-\tilde{\varphi}\cdot \tilde{\varphi}\right\|_{\cH^1} &\leq \left\|\tilde{\psi}\cdot (\tilde{\psi}-\tilde{\varphi})\right\|_{\cH^1} + \left\|\tilde{\varphi}\cdot (\tilde{\psi}-\tilde{\varphi})\right\|_{\cH^1} \leq (\|\tilde{\psi}\|_{\cH^1}+\|\tilde{\varphi}\|_{\cH^1})\left\|\tilde{\psi}-\tilde{\varphi}\right\|_{\cH^1}\\
		\left\||\tilde{\psi}|^2-|\tilde{\varphi}|^2\right\|_{\cH^1} & \leq \tfrac{1}{2}\|(\tilde{\psi}-\tilde{\varphi})\cdot (\overline{\tilde{\psi}}+\overline{\tilde{\varphi}})\|_{\cH_1} +  \tfrac{1}{2}\|(\tilde{\psi}+\tilde{\varphi})\cdot (\overline{\tilde{\psi}}-\overline{\tilde{\varphi}})\|_{\cH_1} \leq (\|\tilde{\psi}\|_{\cH^1} + \|\tilde{\varphi}\|_{\cH^1})\left\|\tilde{\psi}-\tilde{\varphi}\right\|_{\cH^1}.
		\end{aligned}
		$$

		Similarly, using (A-Na), i.e., the Lipschitz assumption on $\chi^{(3)}$, we get \eqref{Lip_est_loc_ome}. Indeed, for $\omega_1, \omega_2\in B_{\delta}(\omega_0)$ and $u\in D(A)\cap \cH^1$,
		\begin{equation*}
			\begin{split}
				\left\|h(\cdot,\omega_1, u)-h(\cdot,\omega_2, u)\right\|_{\cH^1}&\leq \epsilon_0\mu_0^3 \max_{j\in\{1,\dots,m\}}\left\|\omega_1\hat{\chi}^{(3)}_1(\cdot,\omega_1)-\omega_2\hat{\chi}^{(3)}_2(\cdot,\omega_2)\right\|_{W^{1,\infty}(I_j)}\left\|2|\tilde{u}|^2\tilde{u}+(\tilde{u}\cdot \tilde{u})\overline{\tilde{u}}\right\|_{\cH^1}\\
				&\leq c_a^{(2)}\left|\omega_1-\omega_2\right|\left\|u\right\|^3_{\cH^1},
			\end{split}
		\end{equation*}
		where we have used
		$$
		\begin{aligned}
		\left\|\omega_1\hat{\chi}^{(3)}_1(\cdot,\omega_1)-\omega_2\hat{\chi}^{(3)}_2(\cdot,\omega_2)\right\|_{W^{1,\infty}(I_j)} &\leq \max_{l=1,2}\left\|\hat{\chi}^{(3)}_1(\cdot,\omega_l)\right\|_{W^{1,\infty}(I_j)}|\omega_1-\omega_2|\\
		& \quad +\max_{l=1,2}|\omega_l|\left\|\hat{\chi}^{(3)}_1(\cdot,\omega_1)-\hat{\chi}^{(3)}_2(\cdot,\omega_2)\right\|_{W^{1,\infty}(I_j)}\\
			&\leq (\|\hat{\chi}^{(3)}(\cdot,\omega_0)\|_{W^{1,\infty}(I_j)}+L_a\delta)|\omega_1-\omega_2| + (|\omega_0|+\delta) L_a |\omega_1-\omega_2|.
		\end{aligned}
			$$
	\end{proof}

	\bibliographystyle{abbrv}
	\bibliography{Bibliography1}

\end{document}